\newcounter{lgcomment}
\newcounter{lacomment}
\newcounter{fgcomment}
\def\r{\mathbb{R}}
\def\s{\mathbb{S}}
\def\t{\mathbb{T}}
\def\cA{\mathcal{A}}
\def\cP{\mathcal{P}}
\def\pr2{\r P^2}
\def\Susp{\mathrm{Susp}}
\def\Kl{\mathrm{Kl}}
\def\pt{\mathrm{pt}}
\DeclareMathOperator{\interior}{int}
\newcommand{\TP}{T\hspace{-2.9pt}P}
\newcommand{\SCS}{\widehat{\#}} 
\numberwithin{equation}{section}
\newtheorem{theorem}{Theorem}[section]
\newtheorem{lemma}[theorem]{Lemma}
\newtheorem{proposition}[theorem]{Proposition}
\newtheorem{thm}{Theorem}
\newtheorem{cor}[thm]{Corollary}
\theoremstyle{remark}
\newtheorem{remark}[theorem]{Remark}
\theoremstyle{definition}
\newtheorem{definition}[theorem]{Definition} 
\theoremstyle{remark} 
\newtheorem*{ack}{Acknowledgements}
\begin{document}

\title{Decompositions of three-dimensional Alexandrov spaces}

\author[Franco Reyna]{Luis Atzin Franco Reyna$^{(1,2)}$}
\address[Franco Reyna]{Facultad de Ciencias, UNAM, Mexico and Department of Mathematical Sciences, Durham University, United Kingdom and Department of Mathematics, University of Notre Dame, USA.}
\email{lfrancor@nd.edu}

\thanks{$^{(1)}$Supported in part by a scholarship ``Iniciaci\'on a la Investigaci\'on'' from UNAM, Mexico and hosted by Durham University.}
\thanks{$^{(2)}$Supported in part by NSF grant DMS-2042303 ``Singular Riemannian Foliations and Applications to Curvature and Invariant Theory''.}

\author[Galaz-Garc\'ia]{Fernando Galaz-Garc\'ia$^{(3,4)}$}
\address[Galaz-Garc\'ia]{Department of Mathematical Sciences, Durham University, United Kingdom.}
\email{f.galaz-garcia@durham.ac.uk}

\thanks{$^{(3)}$Supported in part by the Deutsche Forschungsgemeinschaft (DFG) grant GA 2050 2-1 within the Special Priority Programme 2026 ``Geometry at Infinity'' and grant no. 281869850 under the Research Training Group 2229 ``Asymptotic Invariants and Limits of Groups and Spaces''.}

\thanks{$^{(4)}$Supported in part by research grants MTM2017-‐85934-‐C3-‐2-‐P from the  Ministerio de Econom\'{\i}a y Competitividad de Espa\~{n}a (MINECO) and PID2021-124195NB-C32 from the Ministerio de Ciencia e Innovación (MICINN), and by ICMAT Severo Ochoa project CEX2019-000904-S (MINECO), Spain.}

\author[G\'omez-Larra\~{n}aga]{Jos\'e Carlos G\'omez-Larra\~naga}
\address[G\'omez-Larra\~naga]{CIMAT-M\'erida, Yucatan, Mexico.}
\email{jcarlos@cimat.mx}


\author[Guijarro]{Luis Guijarro$^{(4)}$}
\address[Guijarro]{Department of Mathematics, Universidad Aut\'onoma de Madrid and ICMAT CSIC-UAM-UC3M, Spain}
\email{luis.guijarro@uam.es}

\author[Heil]{Wolfgang Heil}
\address[Heil]{Department of Mathematics, Florida State University, USA.}
\email{heil@math.fsu.edu}

\setlist{wide, labelwidth=0pt, labelindent=0pt}

\begin{abstract}
We extend basic results in $3$-manifold topology to general three-dimensional Alexandrov spaces (or Alexandrov $3$-spaces for short), providing a unified framework for manifold and non-manifold spaces. We generalize the connected sum to non-manifold $3$-spaces and prove a prime decomposition theorem, exhibit an infinite family of closed, prime non-manifold $3$-spaces which are not irreducible, and establish a conjecture of Mitsuishi and Yamaguchi on the structure of closed, simply-connected Alexandrov $3$-spaces with non-negative curvature. Additionally, we define a notion of generalized Dehn surgery for Alexandrov $3$-spaces and show that any closed Alexandrov $3$-space may be obtained by performing generalized Dehn surgery on a link in $S^3$ or the non-trivial $S^2$-bundle over $S^1$. As an application of this result, we show that every closed Alexandrov $3$-space is homeomorphic to the boundary of a $4$-dimensional Alexandrov space.
\end{abstract}
\date{\today}

\subjclass[2010]{57K30, 53C23, 53C45}
\keywords{Alexandrov $3$-space, $3$-manifold, singular $3$-manifold, prime decomposition, Dehn surgery, non-negative curvature}

\maketitle





\section{Introduction}
\label{s:intro}
Alexandrov spaces are complete, locally compact length spaces with finite (integer) Hausdorff dimension and curvature bounded below in the triangle comparison sense. They are metric generalizations of complete Riemannian manifolds with sectional curvature uniformly bounded below and were first studied by Burago, Gromov and Perelman in \cite{BGP}. Alexandrov spaces play an important role in global Riemannian geometry. There, they arise as orbit spaces of isometric compact Lie group actions on Riemannian manifolds with sectional curvature bounded below or as Gromov--Hausdorff limits of sequences of Riemannian $n$-manifolds with a uniform lower sectional curvature bound. In the latter guise, they appear, for example, in Perelman's proof of Thurston's geometrization conjecture \cite{Perelman.Poincare.1,Perelman.Poincare.2}. Infinite-dimensional Alexandrov spaces also arise in applications of metric geometry to data analysis, where geometric considerations of data sets come into play \cite{Bubenik.Hartsock,Che.et.al,Turner.et.al}. 
Furthermore, Alexandrov spaces have a rich geometric and topological structure, making them an interesting subject of study in their own right.

Alexandrov spaces are not necessarily homeomorphic to manifolds (they include, for example, Riemannian orbifolds with sectional curvature uniformly bounded below) and their topology is far from being understood. To address this problem, it is natural to consider first low-dimensional spaces. By the work of Perelman, one- and two-dimensional Alexandrov spaces are homeomorphic to topological manifolds (see \cite{bbi}). The present article focuses on three-dimensional Alexandrov spaces, or \emph{Alexandrov $3$-spaces} for short.

A closed (i.e., compact and without boundary) Alexandrov $3$-space is homeomorphic to either a topological $3$-manifold or a non-orientable topological $3$-manifold with boundary an even number of copies of the real projective plane $P^2$ which are ``capped off'' by gluing cones over $P^2$. Conversely, any such space is homeomorphic to some Alexandrov space (see \cite{galaz-guijarro2015}). Locally, every point in an Alexandrov $3$-space has a neighborhood homeomorphic to either a $3$-ball or a cone over $P^2$. Thus, the geometry and topology of Alexandrov $3$-spaces include those of $3$-manifolds as a particular case. The non-manifold case has only been explored recently (see, for example, \cite{barcenas.nunez.zimbron.2021,barcenas.sedano.2023,Deng.et.al,galaz-guijarro2020,galaz.garcia.nunez.zimbron.2020.local.S1.actions,nunez.zimbron.2018.S1.actions,galaz-garcia.searle.2011,galaz-garcia.tuschmann.2019,mitsuishi-yamaguchi2015} and the surveys \cite{galaz.garcia.2016.survey,galaz.guijarro.nunez.zimbron,galaz.nunez.zimbron}) and finds applications within and without  metric geometry (see \cite{foscolo,grove_petersen}). 
Interestingly, non-manifold Alexandrov spaces are homeomorphic to spaces that have appeared previously in the literature under the name of ``singular $3$-manifolds'', introduced by Quinn in \cite{Quinn81} (see also \cite{HAM,HAS}). Nevertheless, a theory for singular $3$-manifolds analogous to that for $3$-manifolds still needs to be fully developed (see \cite[Open Problem 6]{MaRo}). Motivated by the preceding considerations, we extend basic results in $3$-manifold topology to general Alexandrov $3$-spaces, providing a unified framework for manifold and non-manifold spaces, and derive some geometric conclusions.

First, we generalize the notion of connected sum to the non-manifold case (see Definition~\ref{def:connected.sum}). For closed non-manifold Alexandrov $3$-spaces $P$ and $Q$, we may remove open neighborhoods $U_p$ and $V_q$ of points $p\in P$ and $q\in Q$, respectively, so that both $U_p$ and $V_q$ are homeomorphic to a ball or a cone over $P^2$. We then identify the boundary components of $P\setminus U_p$ and $Q\setminus V_q$ to obtain a space $P\#^{p,q}Q$. In contrast to the manifold case, the resulting space may depend on the choice of points used to perform the connected sum.


\begin{thm}
\label{thm:connected.sum.depends.on.points}    
There exist closed non-manifold Alexandrov $3$-spaces $P$, $Q$ and points $p_1,p_2\in P$, $q_1,q_2\in Q$ such that $P\#^{p_1,q_1}Q$ and  $P\#^{p_2,q_2}Q$ are not homeomorphic. 
\end{thm}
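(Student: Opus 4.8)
The plan is to exhibit explicit spaces and points where a computable invariant distinguishes the two connected sums. Since non-manifold Alexandrov $3$-spaces are classified as a $3$-manifold with boundary a nonzero even number of $P^2$'s, each capped off with a cone $C(P^2)$, the natural invariant is the first homology group (or the fundamental group), which detects the interplay between the manifold part and the cone points. The key observation is that performing the connected sum at a \emph{cone point} (where $U_p$ is a cone over $P^2$) versus at a \emph{manifold point} (where $U_p$ is a $3$-ball) has genuinely different effects: removing a cone over $P^2$ deletes a $P^2$-boundary component, whereas removing a ball behaves like the usual manifold connected sum. Thus even for a fixed pair $P$, $Q$, choosing $p_1$ a cone point and $p_2$ a manifold point in $P$ should change the homeomorphism type of the result.

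First I would set up a concrete example. Take $Q = \Susp(P^2)$, the spherical suspension of $P^2$, which has exactly two cone points; it is the simplest closed non-manifold Alexandrov $3$-space and admits a non-negatively curved Alexandrov metric. Let $P$ be, say, a connected sum of two copies of $\Susp(P^2)$, or more simply take $P = \Susp(P^2)$ as well, so the statement becomes about $\Susp(P^2)\#^{p_i,q_i}\Susp(P^2)$ for suitable point choices. Then I would pick $p_1$ to be one of the two cone points and $q_1$ a cone point of $Q$; removing cone neighborhoods from both and gluing along the resulting $P^2$ boundaries yields a space with no cone points at all — in fact one can identify $\Susp(P^2)\#^{\text{cone},\text{cone}}\Susp(P^2)$ with a genuine $3$-manifold (a quotient related to $S^2\times S^1$ or $\r P^3 \# \r P^3$, to be pinned down via a Mayer--Vietoris computation). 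On the other hand, choosing $p_2$ a manifold point and $q_2$ a manifold point gives the ordinary connected sum of two copies of $\Susp(P^2)$, which retains four cone points and is visibly non-manifold near those points. A local topological invariant — the number of points without a Euclidean neighborhood, which is a homeomorphism invariant — then separates the two spaces immediately.

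The main steps, in order, are: (1) recall from the classification that each closed non-manifold Alexandrov $3$-space $X$ has a well-defined finite set of \emph{singular points} (those with a neighborhood homeomorphic to $C(P^2)$ but not to $\r^3$), and that this set is a homeomorphism invariant with even, positive cardinality in the non-manifold case; (2) verify that in the connected sum $P\#^{p,q}Q$, a point other than on the gluing $P^2$ is singular iff it was singular in $P$ or $Q$ and was not the chosen point $p$ or $q$, and no new singular points are created along the gluing locus (since a neighborhood of the glued $P^2$ is either $P^2\times\r$ or $C(P^2)\cup_{P^2} C(P^2)=\Susp(P^2)$, whose only possible singular points are the two suspension points, already accounted for); (3) conclude that the number of singular points of $P\#^{p,q}Q$ equals $(\#\operatorname{Sing}(P) - [p\in\operatorname{Sing} P]) + (\#\operatorname{Sing}(Q) - [q\in\operatorname{Sing} Q])$; (4) instantiate with $P = Q = \Susp(P^2)$, $(p_1,q_1)$ both cone points, $(p_2,q_2)$ both manifold points, getting singular-point counts $0$ and $4$ respectively, hence the two spaces are not homeomorphic.

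The step I expect to be the main obstacle is (2), specifically confirming that gluing two copies of $C(P^2)$ along their $P^2$ boundary produces exactly $\Susp(P^2)$ with precisely two singular points and no others, and more importantly confirming that \emph{no} exotic singular behavior arises in the general gluing — i.e., that the homeomorphism type of a neighborhood of the gluing $P^2$ in $P\#^{p,q}Q$ is controlled. This requires care with how the two boundary $P^2$'s are identified (the gluing homeomorphism of $P^2$ is essentially unique up to isotopy since the mapping class group of $P^2$ is trivial, so this is actually not a real obstruction, but it must be stated), and with verifying that a collar of the gluing surface together with the two adjacent pieces patches correctly. A secondary subtlety is ensuring $P = \Susp(P^2)$ genuinely qualifies as an Alexandrov $3$-space (it does, with the spherical suspension metric) and that its two cone points are the only singular points (immediate from the suspension structure). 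Once the singular-set bookkeeping in (2)--(3) is in place, the rest is a one-line substitution.
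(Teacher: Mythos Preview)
Your approach is correct and considerably more elementary than the paper's, but there is an arithmetic slip in step~(4): applying your own formula from~(3) to $P=Q=\Susp(P^2)$ with $(p_1,q_1)$ both singular gives $(2-1)+(2-1)=2$ singular points, not $0$. Indeed $\Susp(P^2)\setminus U_{p_1}\approx K(P^2)$ still contains the \emph{other} suspension vertex, so the glued space is $K(P^2)\cup_{P^2}K(P^2)=\Susp(P^2)$ again, with two singular points---not a manifold. The argument survives intact since $2\neq 4$, and the rest of your bookkeeping in steps~(1)--(3) is fine.

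The paper takes a genuinely different and harder route: it restricts attention to the \emph{singular} connected sum $\hat{\#}$ throughout and shows that the result can depend on \emph{which} singular point is used. Concretely, it invokes Negami's realization theorem for $P^2$-graphs to build a closed space $Q$ with four singular points and one essential separating $P^2$, chooses two singular points $q_1,q_2$ in different positions relative to this $P^2$, and distinguishes $Q\,\hat{\#}^{\,q_1,q_1}Q$ from $Q\,\hat{\#}^{\,q_2,q_2}Q$ by showing that the colored $P^2$-graphs $G(M_{X_1})$ and $G(M_{X_2})$ of the underlying manifolds are non-isomorphic, using a theorem of Heil and Negami. Both spaces there have six singular points, so your counting invariant cannot separate them. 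What the paper's argument buys is the stronger phenomenon---used later in the discussion of prime decompositions---that even within the singular connected sum the choice of gluing point matters; what your argument buys is a two-line proof of the theorem as literally stated.
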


We next generalize the notion of prime manifold (see Definition~\ref{def:prime.space}) and prove that every closed  Alexandrov $3$-space admits a connected sum decomposition into prime spaces. This result generalizes the classical prime decomposition theorem for $3$-manifolds of Kneser (see, for example, \cite{hempel}).  


\begin{thm}
\label{thm:existence.prime.decomposition}
Every closed Alexandrov $3$-space has a prime decomposition.
\end{thm}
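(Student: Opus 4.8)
The plan is to adapt Kneser's proof of the classical prime decomposition theorem for $3$-manifolds (see \cite{hempel}) to the Alexandrov setting. Recall the structure theorem for closed Alexandrov $3$-spaces: such a space $X$ is obtained from a compact $3$-manifold $N$ whose boundary is a disjoint union of an even number of copies of $P^2$ by gluing a cone over $P^2$ to each boundary component; in particular $X$ is triangulable and its set $\Sigma$ of non-manifold points is finite, each such point having a neighborhood homeomorphic to an open cone over $P^2$. By Definition~\ref{def:connected.sum}, a non-trivial connected sum $A\#^{a,b}B$ is assembled by gluing either two $3$-balls or two cones over $P^2$ along their boundaries; correspondingly, the surface along which $X$ splits is either a $2$-sphere contained in the manifold part $X\setminus\Sigma$, or a $2$-sided copy of $P^2$ in $X\setminus\Sigma$ (non-orientability of $N$ is what makes such $2$-sided projective planes possible). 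Call such a surface \emph{essential} if it bounds neither a $3$-ball nor a cone over $P^2$ on either side; by Definition~\ref{def:prime.space} this is equivalent to saying that splitting $X$ along it and capping the two new boundary components produces two pieces, neither of which is $S^3$ (in the sphere case) nor the suspension $\Susp(P^2)$, the double of the cone over $P^2$ (in the projective-plane case) --- these being the identity elements of the two types of connected sum.

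The crux is a Kneser-type finiteness statement: \emph{there is a constant $N(X)$ such that any family of pairwise disjoint, pairwise non-parallel, essential $2$-spheres and $2$-sided projective planes in $X$ has at most $N(X)$ members}. I would fix a triangulation $\cT$ of $X$ in which $\Sigma$ lies in the vertex set, put the family in normal form with respect to $\cT$, discard parallel copies, and then run Kneser's counting argument: since there are only finitely many combinatorial types of normal discs in each of the finitely many tetrahedra of $\cT$, a family with too many members would contain two surfaces that are normally isotopic in every tetrahedron and hence parallel in $X$. The cone points require no special treatment in the combinatorics, because in the triangulation they are simply vertices whose link happens to be $P^2$ rather than $S^2$; what does need care is to identify which closed normal surfaces are essential in the sense above, and to run the argument for $2$-sided projective planes alongside spheres. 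This is the step I expect to be the main obstacle, as it amounts to setting up normal surface theory on a triangulated pseudomanifold and keeping track of the extra, non-manifold, splitting surfaces.

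Granting finiteness, the decomposition follows by the usual iteration. Choose a family $\cS=\{S_1,\dots,S_\ell\}$ of pairwise disjoint, pairwise non-parallel, essential spheres and $2$-sided projective planes in $X$ that is maximal with respect to inclusion; finiteness bounds $\ell$. Cut $X$ along $\cS$ and cap every resulting $S^2$ boundary component with a $3$-ball and every resulting $P^2$ boundary component with a cone over $P^2$, obtaining closed Alexandrov $3$-spaces $X_1,\dots,X_n$ (still Alexandrov $3$-spaces by the structure theorem, since $X$ is only modified in the permitted ways). By construction $X$ is recovered as the iterated connected sum of $X_1,\dots,X_n$ along the surfaces of $\cS$ --- with the caveat, handled exactly as in the manifold case, that any non-separating members of $\cS$ additionally contribute summands of $S^2$-bundle type over $S^1$ and their projective-plane analogues. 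It remains to see that each $X_i$ is prime. If some $X_i$ contained an essential sphere or $2$-sided $P^2$, say $S$, then by standard innermost-circle arguments --- using that $B^3$ and the cone over $P^2$ contain no essential spheres or $2$-sided projective planes --- $S$ can be isotoped off the capping balls and cones, so that it lies in $X$ disjoint from $\cS$; one checks that $S$ remains essential in $X$ and is not parallel to any $S_j$, contradicting the maximality of $\cS$. Hence every $X_i$ is prime and $X=X_1\#\cdots\#X_n$ is a prime decomposition.
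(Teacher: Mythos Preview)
Your approach differs substantially from the paper's. Rather than running a Kneser-style normal-surface argument on the pseudomanifold $X$ itself, the paper passes immediately to the compact $3$-manifold $M_X$ obtained by excising open cones around the singular points. It then invokes the known prime decomposition for compact $3$-manifolds with boundary to write $M_X=M_1\#\cdots\# M_n$, and inside each irreducible $M_i$ applies Haken's finiteness theorem to extract a maximal system of separating two-sided projective planes. Cutting along those and capping off produces the prime factors. Thus everything is reduced to off-the-shelf $3$-manifold results, and no normal-surface theory on a triangulated pseudomanifold is needed --- precisely the step you flag as the main obstacle.

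Your direct strategy could be made to work, but there is a genuine gap in the treatment of non-separating members of $\cS$. For a non-separating $2$-sphere the tubing trick produces a separating sphere and splits off an $S^2$-bundle over $S^1$, as you say. But tubing a non-separating two-sided $P^2$ to a parallel copy yields a Klein bottle, not a sphere or a projective plane, so there is no analogous separating surface and no ``projective-plane analogue'' of $S^2\times S^1$ that one can peel off as a $\hat\#$-summand. Indeed, Proposition~\ref{prop:prime.not.irreducible} shows that non-separating $P^2$'s occur precisely in certain \emph{prime} non-manifold spaces --- they are not witnesses to a further connected-sum decomposition. The clean fix is to restrict $\cS$ from the outset to \emph{separating} essential spheres and two-sided projective planes: finiteness is unaffected (you are passing to a subfamily), the iterated connected sum then reconstructs $X$ without any extra bookkeeping, and the maximality argument for primality of the pieces goes through, since any separating essential surface in a piece, once pushed off the caps, remains separating in $X$ by the tree structure of a decomposition along separating surfaces. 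This is exactly what the paper does implicitly by cutting only along the separating projective planes $\cP_i'$ in each $M_i$.
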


Irreducibility for general Alexandrov spaces was defined in \cite{galaz.guijarro.nunez.zimbron}, taking into account the presence of topological singularities in the non-manifold case to ensure consistency with the definition of irreducibility for $3$-manifolds (see Definition~\ref{def:irreducibility}).
As in the manifold case, every irreducible Alexandrov space is prime (see Proposition~\ref{prop:irreducible.then.prime}). 
In Theorem~\ref{thm:connected.sum.depends.on.points}, we see that permuting singular summands in a prime decomposition  with respect to the non-manifold connected sum might result in 
non-homeomorphic spaces. This highlights
the importance of understanding not only the prime spaces resulting from a decomposition of the space but also the specific manner in which they are glued  
together. This stands in stark contrast to the manifold case, where Milnor established the uniqueness of the prime decomposition for closed $3$-manifolds, up to permutation of prime factors (see \cite{milnor}). For general closed Alexandrov $3$-spaces, we may first decompose the space using the manifold connected sum and then further reduce each resulting piece using the singular connected sum. A decomposition of this form will be called a \emph{normal} prime decomposition (see Definition~\ref{def:normal.prime.decomposition}).
We show that such decompositions are unique.
This may be seen as a stronger form of uniqueness compared to that in the manifold prime decomposition, as not only the 
prime factors are unique, but also the way they are glued together.

\begin{thm}
\label{thm:normal.decomposition.uniqueness}
Every closed Alexandrov $3$-space admits a unique normal prime decomposition.
\end{thm}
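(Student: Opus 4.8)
The plan is to establish Theorem~\ref{thm:normal.decomposition.uniqueness} by combining the existence result of Theorem~\ref{thm:existence.prime.decomposition} with Milnor's classical uniqueness theorem for $3$-manifolds, applied after one separates the ``manifold part'' of the decomposition from the singular summands. Concretely, a closed Alexandrov $3$-space $X$ is, up to homeomorphism, a (possibly non-orientable) $3$-manifold $M$ with an even number of $P^2$-boundary components capped off by cones $c(P^2)$; the key structural fact (from \cite{galaz-guijarro2015}) is that the set $\Sigma$ of topological singularities is finite and each singular point has a cone neighborhood $c(P^2)$. The first step is to make precise what a normal prime decomposition is: $X = M_1 \,\#\, \cdots \,\#\, M_k \,\SCS\, N_1 \,\SCS\, \cdots \,\SCS\, N_\ell$, where the $M_i$ are prime $3$-manifolds obtained by the ordinary (manifold) connected sum along $2$-spheres avoiding $\Sigma$, and the $N_j$ are the prime singular pieces obtained by further cutting along cones over $P^2$ via the singular connected sum $\SCS$. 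The point is that the $2$-spheres used for the manifold decomposition can be chosen disjoint from a neighborhood of $\Sigma$, so the manifold and singular cuts are performed along disjoint surfaces.

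The second step is to prove uniqueness of the singular part. Here the essential geometric input is that every essential $P^2$ in $X$ (i.e., one along which a singular connected sum is performed) must, after an isotopy, be ``parallel'' to the link of a singular point, so the number $\ell$ of singular summands is determined: it equals half the number of singular points of $X$ (each $N_j$ contributing its own singular points, and the $\SCS$ operation pairing up cone boundaries). More carefully, I would show that if one performs the singular connected sum along a two-sided $P^2 \subset X \setminus \Sigma$ that is incompressible and not boundary-parallel to a cone link, one does not change the homeomorphism type in a way that produces a genuinely new prime factor — essentially because such a $P^2$ bounds, on one side, a twisted $I$-bundle which is itself $c(P^2) \setminus \{\text{vertex}\}$ up to homeomorphism (this uses that $P^2$ is not a boundary of a compact $3$-manifold, forcing the one-sided picture). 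Thus the singular factors $N_j$ are recovered canonically by cutting $X$ along a maximal collection of pairwise-disjoint, non-parallel incompressible $P^2$'s that separate $\Sigma$ into singletons; the resulting pieces are the $N_j$, independent of choices.

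The third step is uniqueness of the manifold part. Once the singular summands are removed, what remains is, after capping the resulting $P^2$-boundaries with cones in the canonical way, a well-defined closed $3$-manifold-with-cone-points whose ``desingularization'' is a genuine closed $3$-manifold $M$; one applies Milnor's theorem \cite{milnor} to the oriented (or orientable-double-cover, handled by the classical extension to the non-orientable case) prime decomposition of $M$ to get uniqueness of the $M_i$ up to order and homeomorphism. Combining the two, $X$ determines the multiset $\{M_1,\dots,M_k\}$ and the multiset $\{N_1,\dots,N_\ell\}$, and since $\#$ is commutative and associative and $\SCS$ is performed last along canonically-determined $P^2$'s, the normal prime decomposition is unique.

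The main obstacle I anticipate is the rigidity statement in the second step: showing that the $P^2$'s along which a normal decomposition cuts can be taken in canonical (isotopy-preferred) position and that non-parallel incompressible copies of $P^2$ are in bijection with ``separations'' of the singular set. The subtlety is that $P^2$ is always one-sided in an orientable ambient manifold but the Alexandrov $3$-space may be non-orientable, so one has to analyze regular neighborhoods of $P^2$ (orientable twisted $I$-bundle versus product $P^2 \times I$) and rule out the product case for the cuts that actually occur in a normal decomposition — this is exactly where the constraint ``even number of $P^2$ boundary components'' and the structure of $c(P^2)$ neighborhoods enter. A secondary technical point is ensuring that the $2$-spheres of the manifold part and the $P^2$'s of the singular part can be simultaneously put in general position and made disjoint, so that the two uniqueness arguments do not interfere; this should follow from a standard innermost-disk/curve argument together with the finiteness of $\Sigma$.
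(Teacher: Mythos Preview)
Your step two rests on a false premise. The claim that ``every essential $P^2$ in $X$ must, after an isotopy, be parallel to the link of a singular point'' is exactly the condition that $X$ be irreducible (Lemma~\ref{lem:irreducibility.characterization}), and prime spaces need not be irreducible: Theorem~\ref{thm:infinite.family.prime.not.irreducible} produces infinitely many prime $X$ with non-separating two-sided $P^2$'s, and the space $Q$ in the proof of Theorem~\ref{thm:connected.sum.depends.on.points} carries a separating essential $P^2$ not parallel to any cone link. So a prime singular summand may have $4$, $8$, or any even number of singular points, the formula $\ell=\tfrac12|\Sigma|$ fails, and---most damagingly---Theorem~\ref{thm:connected.sum.depends.on.points} shows that the multiset of $\hat{\#}$-summands alone does \emph{not} determine the homeomorphism type of the result, so your concluding sentence is unjustified. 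You have also misread Definition~\ref{def:normal.prime.decomposition}: one first prime-factorises $M_X$ as a manifold with $P^2$-boundary, caps each factor $M_i$, and then $\hat{\#}$-factorises each $\widehat{M}_i$ separately; the grouping into blocks is part of the data, not a global split into ``manifold summands'' and ``singular summands''.

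The paper's route is to reduce, via the manifold uniqueness result of \cite{heil}, to the case where $M_X$ is irreducible, and then to compare two $\hat{\#}$-decompositions as two systems $S=\{S_i\}$ and $T=\{T_j\}$ of separating, pairwise non-parallel, non-boundary-parallel projective planes in $M_X$. The essential tool you are missing is Negami's isotopy lemma \cite[Lemma~1.2]{negami1981}, which allows one to arrange $S\cap T=\emptyset$; once that is done, Lemma~\ref{lem:prime.iff.mx.prime.separating.p2.is.boundary.parallel.heil.3.2.3} (primeness of $P_i$ forces every \emph{separating} $P^2$ in $M_{P_i}$ to be boundary-parallel) shows each $T_j$ is parallel to some $S_i$, and one concludes that the two systems---hence the two decompositions---are isotopic.
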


The presence of topological singularities leads to new and interesting topological phenomena. We show that, in contrast to the manifold case,  where 
every prime $3$-manifold is irreducible except for the non-orientable $S^2$-bundle over $S^1$ and $S^2 \times S^1$, there exist a infinitely many closed, prime Alexandrov $3$-spaces that are not irreducible.


\begin{thm}
\label{thm:infinite.family.prime.not.irreducible}
There exists an infinite family of mutually non-homeomorphic non-manifold Alexandrov $3$-spaces which are prime and are not irreducible.
\end{thm}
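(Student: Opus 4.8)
The plan is to produce the desired spaces as ``capped-off'' non-orientable $3$-manifolds with projective-plane boundary and to detect non-irreducibility through an essential copy of $P^2$ that, unlike an essential sphere, does not split the space as a non-trivial generalized connected sum. I would begin by isolating the right phenomenon. In the manifold case the only prime $3$-manifolds failing to be irreducible, $S^2\times S^1$ and its twisted counterpart, are detected by a non-separating sphere, which forces the corresponding $S^2$-bundle summand; a prime manifold can never contain a separating sphere that fails to bound a ball. The analogue of a non-separating sphere here is an essential two-sided $P^2$, and the crucial structural point is that such a $P^2$ does not by itself produce a connected-sum decomposition: the singular connected sum along $P^2$ requires a topologically singular point in \emph{each} summand, and cutting a space open along an essential two-sided $P^2$ and re-capping the two new boundary copies with cones over $P^2$ does not split anything off — it only changes the number of singular points. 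Consequently a space built this way can be simultaneously prime and non-irreducible, the essential $P^2$ bounding no ``half-spindle'' (cone over $P^2$).

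For the construction I would fix, for every $n$ in an infinite index set, a non-orientable compact $3$-manifold $M_n$ whose boundary is a disjoint union of copies of $P^2$, chosen so that: (a) $M_n$ carries an essential two-sided $P^2$ in its interior which, after the boundary components are capped off with cones over $P^2$, becomes non-separating (so it bounds no half-spindle); (b) $M_n$ is irreducible with incompressible boundary, and every essential, non-boundary-parallel separating two-sided $P^2$ in it is accounted for by the cone neighbourhoods; and (c) the resulting closed Alexandrov $3$-spaces $X_n=\widehat{M_n}$ are pairwise non-homeomorphic, e.g.\ distinguished by $\pi_1$. Condition (a) gives the non-irreducibility of $X_n$ at once, and $X_n$ is genuinely a non-manifold space since $\partial M_n\neq\varnothing$. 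The families of Theorem~\ref{thm:connected.sum.depends.on.points} suggest concrete candidates for such building blocks.

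The substantive step, and the main obstacle, is to show each $X_n$ is prime. If $X_n = A\# B$ non-trivially, the splitting is realized either by an embedded sphere not bounding a ball or by an embedded two-sided $P^2$ not bounding a half-spindle. In the first case one isotopes the sphere off the cone points into $M_n$ — an innermost-disk argument together with incompressibility of $\partial M_n$ controls the intersections with the boundary $P^2$'s of the cone neighbourhoods — so that it lies in the irreducible manifold $M_n$ and bounds a ball there; a non-separating sphere is excluded because, exactly as in the manifold case, it would force an $S^2\times S^1$-summand, contradicting irreducibility of $M_n$. In the second case one puts the two-sided $P^2$ into normal form with respect to a fixed handle decomposition, again isotopes it off the cone neighbourhoods, and invokes condition (b) to conclude it is boundary-parallel in $M_n$, hence bounds a half-spindle. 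Either way the decomposition is trivial, so $X_n$ is prime, and $\{X_n\}$ is an infinite family of mutually non-homeomorphic closed, prime, non-manifold Alexandrov $3$-spaces, none irreducible. The delicate point throughout is the normalization of embedded spheres and projective planes near the topologically singular points, which is where the genuinely non-manifold nature of the spaces enters.
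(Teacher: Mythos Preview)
Your diagnosis is right: a non-separating two-sided $P^2$ obstructs irreducibility without yielding a connected-sum splitting, and primeness then follows from Lemma~\ref{lem:prime.iff.mx.prime.separating.p2.is.boundary.parallel.heil.3.2.3} once one controls the \emph{separating} projective planes. But the gap is that you never construct the family. Conditions (a)--(c) are a wish-list, and the pointer to Theorem~\ref{thm:connected.sum.depends.on.points} does not help: those manifolds have tree-shaped $P^2$-graphs, hence only \emph{separating} two-sided $P^2$'s, so (a) fails for them. (Aside: capping boundary components with cones cannot change whether an interior surface separates, so ``becomes non-separating'' in (a) is at best redundant.) Producing $M_n$ that simultaneously satisfy (a), (b), (c) is the entire content of the proof, and it is absent.

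The paper's construction is explicit and supplies the missing idea. Take $X=(F_g\times S^1)/\alpha$ with $\alpha$ the product of a hyperelliptic involution on $F_g$ and complex conjugation on $S^1$; since $F_g\times S^1$ is irreducible, Proposition~\ref{irreducible_cover} makes $X$ an irreducible non-manifold Alexandrov $3$-space. Now remove cone neighbourhoods of \emph{two} singular points and identify the two resulting $P^2$ boundaries with each other. The glued $P^2$ is two-sided and non-separating in the new space $X_g$, so $X_g$ is not irreducible. For primeness one has $M_{X_g}=M_X\cup(P^2\times[0,1])$; this is irreducible (gluing an irreducible manifold to itself along incompressible boundary), and any separating $P^2$ in it is isotoped off the collar by an innermost-disk argument into $M_X$, where by Lemma~\ref{lem:irreducibility.characterization} it must be boundary-parallel. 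Since a separating $P^2$ cannot be parallel to either end of the collar, it is boundary-parallel in $M_{X_g}$, and Lemma~\ref{lem:prime.iff.mx.prime.separating.p2.is.boundary.parallel.heil.3.2.3} gives primeness. Varying $g$ yields the infinite family. The self-gluing trick is what your outline is missing: it manufactures the non-separating $P^2$ while inheriting control of every other projective plane from the irreducibility of the seed space $X$.
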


Our next main result establishes a conjecture of Mitsuishi and Yamaguchi asserting that every closed, simply-connected Alexandrov $3$-space with non-negative curvature can be obtained by gluing together two spaces coming from a list of only five different possible non-negatively curved Alexandrov spaces with boundary (see \cite[Conjecture 1.10]{mitsuishi-yamaguchi2015}). The corresponding conjecture in the positively curved case, asserting that a closed, simply-connected positively curved Alexandrov $3$-space is homeomorphic to the $3$-sphere or to $\Susp(P^2)$, the suspension of $P^2$ (see \cite[Conjecture 1.11]{mitsuishi-yamaguchi2015}) was settled in \cite{galaz-guijarro2015}.


\begin{thm}
\label{thm:mitsuishi.yamaguchi}
A closed, simply-connected Alexandrov $3$-space with non-negative curvature is homeomorphic to an isometric gluing $A\cup_{\partial} A'$ for $A$ and $A'$ chosen in the following list of non-negatively curved Alexandrov spaces:
\begin{equation*}
    D^3,\ K(P^2),\ B(\mathrm{pt}),\ B(S_2),\ B(S_4).
\end{equation*}
\end{thm}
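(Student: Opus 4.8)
The plan is to reduce the non-negatively curved case to a classification of the building blocks by combining the soul/splitting theory for Alexandrov spaces with the topological decomposition results already established in this paper. First I would invoke the structure theory of Mitsuishi--Yamaguchi: a closed, simply-connected, non-negatively curved Alexandrov $3$-space $X$ either is positively curved somewhere (and then, by a dimension-reduction/collapsing or soul-type argument, it decomposes along a totally geodesic surface or collapses to a lower-dimensional space) or splits isometrically. The key geometric input is that $X$ contains a compact, totally convex subset $S$ (a ``soul'' or a separating totally geodesic hypersurface) whose normal structure is controlled, so that $X = A \cup_{\partial} A'$ where each of $A, A'$ is a non-negatively curved Alexandrov $3$-space with connected boundary, and the boundary is a closed $2$-manifold of the form $S^2$ or $P^2$ (recall a closed Alexandrov surface is $S^2$, $P^2$, $T^2$, or the Klein bottle, and non-negative curvature of $A$ forces its boundary to be spherical or projective by Gauss--Bonnet / soul considerations on the double).

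Next I would classify the possible pieces $A$. Since $A$ is a compact non-negatively curved Alexandrov $3$-space with boundary $S^2$ or $P^2$, I would pass to its double $DA$, which is a closed non-negatively curved Alexandrov $3$-space, and then apply the topological reduction afforded by Theorems~\ref{thm:existence.prime.decomposition} and \ref{thm:normal.decomposition.uniqueness}: $DA$ has a normal prime decomposition, and non-negative curvature plus simple connectivity of the ambient $X$ (hence strong constraints on $\pi_1(A)$) forces the prime factors to lie in a very short list. Together with the local structure theorem (every point has a neighborhood that is a $3$-ball or a cone over $P^2$), this pins $A$ down to one of the five model spaces $D^3$, $K(P^2)$, $B(\mathrm{pt})$, $B(S_2)$, $B(S_4)$, where $B(S_k)$ denotes the appropriate disk bundle / quotient constructions with $k$ singular points and $K(P^2)$ the cone on $P^2$. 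I would verify case by case that each of these carries a non-negatively curved Alexandrov metric whose boundary is a round $S^2$ or $P^2$ of the matching size, so that the gluing $A \cup_\partial A'$ can be performed isometrically.

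The final step is the gluing itself: given the two pieces with isometric boundary, Petrunin's gluing theorem (gluing two non-negatively curved Alexandrov spaces along isometric convex/boundary components preserves the lower curvature bound) yields that $A \cup_\partial A'$ is again a non-negatively curved Alexandrov $3$-space, and one checks it is homeomorphic to the original $X$ by construction. One must also run the bookkeeping that the boundary types match up: an $S^2$-boundary piece ($D^3$, $B(\mathrm{pt})$ in one incarnation) glues to another $S^2$-boundary piece, and a $P^2$-boundary piece ($K(P^2)$, and the $B(S_k)$ with a $P^2$ boundary) glues to another $P^2$-boundary piece, and simple connectivity of $X$ rules out the toroidal gluings.

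The main obstacle I expect is the classification of the pieces $A$: showing that the topological constraints coming from the prime decomposition theorems, combined with the non-negative curvature and the control on $\pi_1$, really do reduce to exactly five models and no more. In particular, ruling out ``exotic'' non-manifold pieces with several cone-over-$P^2$ singularities, and confirming that each surviving model genuinely admits a non-negatively curved metric with the prescribed round boundary (so that Petrunin gluing applies), is the delicate heart of the argument; this is precisely where the new non-manifold prime decomposition machinery of Theorems~\ref{thm:existence.prime.decomposition}--\ref{thm:normal.decomposition.uniqueness} does the work that was unavailable to Mitsuishi and Yamaguchi.
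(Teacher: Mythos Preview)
Your proposal contains a concrete error and a strategic misdirection. The error: you assert that non-negative curvature forces $\partial A$ to be $S^2$ or $P^2$, and that simple connectivity of $X$ ``rules out the toroidal gluings.'' Neither is true. Among the five model blocks, $B(\mathrm{pt})$ has boundary a Klein bottle and $B(S_4)$ has boundary $T^2$; indeed one of the target spaces in the theorem is the capped octopod $B(S_4)\cup_{T^2}B(S_4)=T^3/\beta$, which is simply connected and is glued along a torus. So your Gauss--Bonnet step and your $\pi_1$-bookkeeping both fail, and with them the whole classification-of-pieces scheme collapses: you would never produce $B(\mathrm{pt})$ or $B(S_4)$.

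The strategic misdirection is the role you assign to Theorems~\ref{thm:existence.prime.decomposition} and~\ref{thm:normal.decomposition.uniqueness}. The paper does not use the prime-decomposition machinery here at all. Instead, the proof quotes the topological classification of closed simply-connected non-negatively curved Alexandrov $3$-spaces from \cite{galaz-guijarro2015}: such an $X$ is homeomorphic to $S^3$, $\Susp(P^2)$, $\Susp(P^2)\#\Susp(P^2)$, or a quotient of a closed orientable flat $3$-manifold by an orientation-reversing isometric involution with isolated fixed points. The work in this section is then purely identificational: Lemmas~\ref{double_bpt}--\ref{double_bs4} match the first three cases to gluings of the listed blocks, and for the flat quotients one invokes Luft--Sjerve to list the three candidate involutions and then uses Lemma~\ref{bipod_tetrapod} (an explicit double-cover construction) to show that the capped bipod and capped tetrapod are not simply connected, leaving only the capped octopod $T^3/\beta=B(S_4)\cup_{T^2}B(S_4)$. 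There is no soul argument, no classification of pieces via doubles, and no appeal to Petrunin gluing in the paper's proof.
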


The isometric gluings appearing in Theorem~\ref{thm:mitsuishi.yamaguchi} are homeomorphic to one of the following four spaces: $S^3$, $\Susp(P^2)$, $\Susp(P^2)\#\Susp(P^2)$, or the capped octopod, a certain quotient of the flat $3$-torus. We precisely define the spaces listed in Theorem~\ref{thm:mitsuishi.yamaguchi}, along with other spaces, including the octopod, in Section~\ref{s:preliminaries}. The proof of Theorem~\ref{thm:mitsuishi.yamaguchi} is based on the classification of closed Alexandrov $3$-spaces with non-negative curvature in \cite{galaz-guijarro2015}. 

 By the work of Lickorish \cite{Lickorish1962,Lickorish1963} and Wallace \cite{Wallace1960}, every closed $3$-manifold can be obtained by Dehn surgery on a knot in the $3$-sphere or on the non-trivial $S^2$-bundle over $S^1$. One may use these results to show that every closed $3$-manifold is the boundary of a $4$-dimensional manifold (see \cite{Lickorish1962,Lickorish1963} and cf.\ \cite{Thom}). We define a notion of generalized Dehn surgery for Alexandrov $3$-spaces and use it to obtain analogues of Lickorish's results in this case. 


\begin{thm}
\label{thm:lickorish.dehn.surgery}
Any closed Alexandrov $3$-space may be obtained by generalized Dehn surgery on a link either in the $3$-sphere or in the non-trivial $S^2$-bundle over $S^1$.
\end{thm}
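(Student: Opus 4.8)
The plan is to reduce the Alexandrov case to the classical theorem of Lickorish and Wallace by exploiting the structure theory of closed Alexandrov $3$-spaces. Recall that a closed Alexandrov $3$-space $X$ is homeomorphic to a closed topological $3$-manifold $M$ together with finitely many (say $2k$, necessarily an even number) topologically singular points, each of which has a neighborhood homeomorphic to the open cone $K(P^2)$; the complement of small open cone neighborhoods of these points is a compact $3$-manifold $M^\circ$ whose boundary consists of $2k$ copies of $P^2$, and $X$ is recovered by coning off these boundary components. The first step is to introduce \emph{generalized Dehn surgery}: alongside ordinary Dehn surgery along a circle (remove a solid-torus neighborhood, reglue by a homeomorphism of the boundary torus), one also allows a \emph{singular} surgery move along a circle whose neighborhood is a solid torus $S^1 \times D^2$, in which one removes $S^1 \times D^2$ and glues in $S^1 \widetilde{\times} (\text{M\"obius band})$ or, more to the point, a piece that produces a pair of cone-over-$P^2$ singularities; the effect of this move is precisely to convert a circle in a manifold into a ``singular circle'' realizing two $K(P^2)$ points (this is the same local model that produces non-manifold points from a $\mathbb{Z}_2$ action, cf.\ the discussion of $B(S_k)$ in Section~\ref{s:preliminaries}). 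One should check that a single such move introduces exactly two topologically singular points and that $k$ successive moves along disjoint circles produce $2k$ of them.

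The second step is the geometric input: by Perelman (and as recorded earlier in the excerpt), removing the singular points of $X$ and doubling, or more simply passing to the associated orientable branched double cover or to the underlying manifold $M$, one has an honest closed $3$-manifold. The cleanest route is: by the structure theorem, $X = M \cup (\text{cones})$; choose the $3$-manifold $\widehat{M}$ obtained from $M^\circ$ by gluing a solid torus to each boundary $P^2$ via the orientation double cover relation --- equivalently, $\widehat{M}$ is a closed $3$-manifold from which $X$ is obtained by $k$ singular surgery moves along a link $L_{\mathrm{sing}} \subset \widehat{M}$ (the cores of the glued-in solid tori). By Lickorish--Wallace, $\widehat{M}$ itself is obtained by ordinary Dehn surgery along a link $L_{\mathrm{man}}$ in $S^3$ or in the twisted $S^2$-bundle over $S^1$; one must arrange $L_{\mathrm{man}}$ and $L_{\mathrm{sing}}$ to be disjoint (achievable by a small isotopy, since both are $1$-dimensional and the surgery presentation is flexible), so that $X$ is obtained from $S^3$ (or the twisted bundle) by generalized Dehn surgery along the single link $L = L_{\mathrm{man}} \cup L_{\mathrm{sing}}$, performing ordinary surgery on the components of $L_{\mathrm{man}}$ and singular surgery on the components of $L_{\mathrm{sing}}$. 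The only subtlety is the ambient manifold: Lickorish's theorem naturally produces either $S^3$ or the non-trivial $S^2$-bundle over $S^1$ depending on the parity of a certain obstruction ($M$ need not bound), and this dichotomy is inherited verbatim --- hence the statement's ``either $S^3$ or the non-trivial $S^2$-bundle over $S^1$''.

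I expect the main obstacle to be purely definitional rather than deep: namely, pinning down the right notion of generalized Dehn surgery so that (i) it specializes to ordinary Dehn surgery on manifold points, (ii) a singular move genuinely yields the local cone-over-$P^2$ model and not some other non-manifold germ, and (iii) the set of spaces reachable by such surgeries from $S^3$ and the twisted bundle is exactly the set of closed Alexandrov $3$-spaces --- so that the theorem is both true and non-vacuous. Once the definition is fixed, verifying that the singular surgery move is the ``cone off a $P^2$ boundary'' operation is a local computation with the model $S^1 \times D^2$ versus the mapping torus of the antipodal-type involution, and the disjointness of $L_{\mathrm{man}}$ from $L_{\mathrm{sing}}$ follows from general position. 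A secondary point requiring care is the framing/slope data: ordinary surgery needs a rational slope on each torus, while the singular move needs no slope (the gluing of the singular piece is essentially rigid up to the action of the mapping class group of $P^2$, which is small), so the ``surgery coefficients'' of a generalized Dehn surgery form a tuple with entries in $\mathbb{Q} \cup \{\ast\}$, the $\ast$ marking the singular components; this bookkeeping should be made explicit but poses no real difficulty.
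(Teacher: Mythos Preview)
Your overall strategy---reduce to a closed $3$-manifold and invoke Lickorish--Wallace, then reinsert the singular points by a final round of surgery along a link disjoint from the first---is exactly the paper's. But the specific construction of your intermediate manifold $\widehat{M}$ does not work, and the fix is the one idea the paper supplies that your sketch misses.

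The gap is this: you propose to obtain $\widehat{M}$ from $M^\circ$ (the complement of the open cone neighborhoods, with $2k$ copies of $P^2$ as boundary) by ``gluing a solid torus to each boundary $P^2$ via the orientation double cover relation''. No such filling exists. A solid torus has boundary $T^2$, not $P^2$; more to the point, \emph{no} compact $3$-manifold has boundary a single $P^2$, since the Euler characteristic of the boundary of a compact $3$-manifold is even while $\chi(P^2)=1$. So the $P^2$ boundary components cannot be capped off individually by manifold pieces. Relatedly, your singular move (remove a solid torus $S^1\times D^2$, glue in a piece carrying two $K(P^2)$'s) cannot match boundaries either: the natural singular piece $B(\mathrm{pt})$ has boundary a Klein bottle, not a torus, so it can only replace a solid Klein bottle neighborhood of an orientation-reversing knot in a non-orientable ambient manifold.

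The paper's remedy is to \emph{pair} the $P^2$ boundary components. In $M^\circ$ choose disjoint arcs joining the $P^2$'s in pairs; a regular neighborhood of a pair together with its arc is a boundary-connected sum $(P^2\times I)\#_b(P^2\times I)$, and after reattaching the two cones this is precisely a copy of $B(\mathrm{pt})$, whose boundary is a Klein bottle. Removing these $k$ copies of $B(\mathrm{pt})$ leaves a non-orientable $3$-manifold with $k$ Klein bottle boundary components, which one closes up by solid Klein bottles to obtain a closed non-orientable $N$. Lickorish's theorem then presents $N$ by surgery on a link in $S^2\tilde{\times}S^1$, and transversality lets you push the link off the solid Klein bottles so that the final ``replace solid Klein bottle by $B(\mathrm{pt})$'' moves commute with the ordinary surgeries. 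Two remarks on your closing paragraph: the dichotomy $S^3$ versus $S^2\tilde{\times}S^1$ in Lickorish's work is governed by orientability, not by a bounding obstruction (every closed $3$-manifold bounds); since $M^\circ$ is non-orientable whenever $X$ is non-manifold, the singular case always lands in $S^2\tilde{\times}S^1$. And the gluing of $B(\mathrm{pt})$ along a Klein bottle is not quite rigid---$\mathrm{MCG}(\mathrm{Kl})$ has order four---but this is harmless for the existence statement.
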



\begin{cor}
\label{cor:lickorish.boundary.4d}
Every closed Alexandrov $3$-space is homeomorphic to the boundary of a $4$-dimensional Alexandrov space.
\end{cor}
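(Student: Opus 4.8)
The plan is to upgrade the classical argument of Lickorish and Wallace --- every closed $3$-manifold bounds a compact $4$-manifold --- using the generalized Dehn surgery presentation of Theorem~\ref{thm:lickorish.dehn.surgery} in place of ordinary surgery, and the gluing theorem for Alexandrov spaces with a lower curvature bound (due to Perelman) in place of the smoothing of handle attachments. Let $X$ be a closed Alexandrov $3$-space. By Theorem~\ref{thm:lickorish.dehn.surgery}, $X$ is obtained by a generalized Dehn surgery on a link $L = K_1 \cup \dots \cup K_m$ in $Y$, where $Y$ is $S^3$ or the non-trivial $S^2$-bundle over $S^1$. In either case $Y$ bounds a compact $4$-manifold $Z$: one takes $Z = D^4$ when $Y = S^3$, and $Z$ the non-trivial $D^3$-bundle over $S^1$ in the remaining case; each carries a non-negatively curved metric making it a $4$-dimensional Alexandrov space with $\partial Z = Y$.

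A generalized Dehn surgery along $L$ is carried out component by component: for each $i$ one deletes the interior of a tubular neighborhood $\nu(K_i)$ of $K_i$ and glues in, along $\partial\nu(K_i)$, a prescribed compact $3$-dimensional Alexandrov filling piece $F_i$ --- a solid torus for the ordinary surgery types, and, for the finitely many new types, the pieces that introduce the cone-over-$P^2$ points of $X$. For each $F_i$ one would exhibit a compact $4$-dimensional Alexandrov space $H_i$, a ``generalized $2$-handle'', with $\partial H_i = F_i \cup_{\partial\nu(K_i)} \overline{\nu(K_i)}$; equivalently $H_i$ is an Alexandrov cobordism from $F_i$ to the solid torus $\overline{\nu(K_i)}$ rel their common boundary. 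For an ordinary surgery $H_i$ is the familiar $4$-dimensional $2$-handle $D^2\times D^2$, attached to $Z$ along $\nu(K_i)\subset\partial Z$ with the prescribed framing, exactly as in the Lickorish--Wallace construction. For each of the finitely many singular surgery types one constructs $H_i$ explicitly, arranging that the cone-over-$P^2$ points contributed by $F_i$ get pushed into the interior of $H_i$, where they become $4$-dimensional Alexandrov singularities, each modeled on a Euclidean cone over $\Susp(P^2)$ --- a non-negatively curved model, since $\Susp(P^2)$ has curvature $\ge 1$. One then forms $W = Z \cup H_1 \cup \dots \cup H_m$, attaching each $H_i$ to $Z$ along its $\overline{\nu(K_i)}$-face, identified with $\nu(K_i)\subset\partial Z = Y$.

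After the usual preliminary rescalings and collarings so that all the identifications are isometries of the relevant boundary portions, $W$ is assembled from Alexandrov spaces of non-negative curvature glued along isometric pieces, so by Perelman's gluing theorem $W$ is a $4$-dimensional Alexandrov space; the corners created along the $\partial\nu(K_i)$ need no special treatment in the Alexandrov category. Chasing boundaries, $\partial W$ is obtained from $\partial Z = Y$ by deleting the $\nu(K_i)$ and gluing in the $F_i$, which is precisely the generalized Dehn surgery producing $X$; hence $\partial W$ is homeomorphic to $X$, which proves the corollary.

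The main difficulty lies in the construction of the singular handles $H_i$: one must produce, for each new surgery type permitted by the definition of generalized Dehn surgery, a compact $4$-dimensional \emph{Alexandrov} space realizing the required relative cobordism, and --- the more delicate point --- do so with enough metric control (non-negative curvature, together with isometric gluing loci along all the $\nu(K_i)$ at once) for the gluing theorem to apply to $W$ globally. The underlying topological relative cobordisms exist because $\Omega_3 = 0$; the work lies in realizing them geometrically as Alexandrov spaces and matching all the boundary metrics simultaneously.
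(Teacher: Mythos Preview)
Your strategy of attaching one generalized $2$-handle per surgery component to a non-negatively curved filling $Z$ of $Y$ is in the right spirit, but the step where you assert that ``the corners created along the $\partial\nu(K_i)$ need no special treatment in the Alexandrov category'' is false, and this breaks the argument. Perelman's (equivalently, Petrunin's) gluing theorem requires gluing along an isometry of \emph{entire} boundaries; attaching a handle $H_i$ to $Z$ identifies only a solid torus $\nu(K_i)\subsetneq\partial Z$ with a proper face of $\partial H_i$. Along the edge $\partial\nu(K_i)$ the transverse local model is a planar sector of angle $3\pi/2$ (a half-plane from $Z$ with a quadrant from $H_i$ attached along a boundary ray), and such a sector has branching minimizing geodesics through its apex, so it admits \emph{no} lower curvature bound. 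The same then holds for the glued $4$-space $W$ --- this is not merely a failure of non-negativity but of the Alexandrov condition for every $\kappa$. This is precisely why one must smooth corners after handle attachment in the smooth category; there is no free Alexandrov substitute, and the difficulty is already present for the ordinary $2$-handles $D^2\times D^2$, not only for the singular ones.

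The paper circumvents all of this by staying in the smooth orbifold category and never invoking a gluing theorem. The singular handle is written down explicitly as the smooth orbifold $D^2\times D^2/\tau$ with $\tau(x,y)=(-x,\bar y)$, whose boundary is $B(\mathrm{pt})\cup_{\Kl}(\text{solid Klein bottle})$. One removes the $B(\mathrm{pt})$ pieces from $P$ to get $N_P$, caps the Klein-bottle boundaries with solid Klein bottles to obtain a closed non-orientable $3$-manifold $V$, and takes a smooth compact $4$-manifold $W$ with $\partial W=V$ from Lickorish (used as a black box, so the corner-smoothing for ordinary handles is already absorbed). Then one glues a copy of $D^2\times D^2/\tau$ to $W$ along each solid Klein bottle and \emph{smooths the resulting corners} --- a routine operation for smooth orbifolds. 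The output is a compact smooth orbifold $Z$ with $\partial Z\approx P$, and any compact smooth Riemannian orbifold has sectional curvature bounded below by compactness, hence is automatically an Alexandrov space. No non-negative curvature, no metric matching, and no Alexandrov gluing theorem are needed.
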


 Our article is organized as follows. In Section~
 \ref{s:preliminaries}, we recall basic facts on Alexandrov $3$-spaces. In Section~\ref{s:prime.decomposition}, we define the connected sum, prime decomposition, and prove Theorem~\ref{thm:existence.prime.decomposition}. Section~\ref{s:infinitely.many.prime.non.irreducible.spaces} contains the proof of  Theorem~\ref{thm:infinite.family.prime.not.irreducible}. 
We prove Theorem~\ref{thm:mitsuishi.yamaguchi} in Section~\ref{s:mitsuishi.yamaguchi}.
 Finally, in Section~\ref{s:dehn.surgery}, we define generalized Dehn surgery and prove Theorem~\ref{thm:lickorish.dehn.surgery} and Corollary~\ref{cor:lickorish.boundary.4d}.

\begin{ack} Wolgang Heil and Jos\'e Carlos G\'omez-Larrañaga thank the Institute for Algebra and Geometry of the Karlsruhe Institue of Technology (KIT) for its hospitality while part of the work in the present article was carried out. Luis Atzin Franco Reyna, Jos\'e Carlos G\'omez-Larrañaga, and Luis Guijarro thank the Department of Mathematical Sciences of Durham University for its hospitality during the course of the project. 
\end{ack}


\section{Preliminaries} 
\label{s:preliminaries}

In this section, we collect basic facts on three-dimensional Alexandrov spaces (or \emph{Alexandrov $3$-spaces} for short) and list some special spaces that we will use in the rest of the article. For basic material on metric geometry and $3$-manifolds, we refer the reader to \cite{bbi} and \cite{hempel}, respectively. For a broader discussion of Alexandrov $3$-spaces, we refer the reader to \cite{galaz.guijarro.nunez.zimbron,galaz.nunez.zimbron}.


\subsection{Three-dimensional Alexandrov spaces}
\label{ss:3d.alexandrov.spaces}

A locally compact geodesic space $(X,d)$ of finite (Hausdorff) dimension is an \emph{Alexandrov space} (with curvature bounded below) if it has curvature bounded below in the triangle comparison sense. We refer the reader to \cite{bbi,BGP} for the main definitions and theorems about these spaces. For further results, see \cite{Petrunin2007} and \cite{AKP}. Local compactness and the lower curvature bound imply that the Hausdorff dimension must be a non-negative integer. One- and two-dimensional Alexandrov spaces are, respectively, homeomorphic to curves and surfaces. Starting in dimension three, however, topological singularities may appear. For example, one may consider the iterated spherical suspension of a round real projective plane, which is an Alexandrov space with curvature bounded below by $1$ where the vertices of the suspension are non-manifold points.

We will denote the real projective space by $P^2$. The symbol $\approx$ will denote homeomorphism between topological spaces. The suspension of a topological space $T$ will be denoted by $\Susp(T)$. 
We will denote the set of all closed (i.e., compact and without boundary) Alexandrov $3$-spaces  by $\cA$. We will usually denote an arbitrary space in $\cA$ by $P$ or $Q$. Given a space $P\in \cA$, we can associate to each point $p\in P$ a $2$-dimensional Alexandrov space with curvature bounded below by $1$, called the \emph{space of directions of $P$ at $p$} and denoted by $\Sigma_x P$. When there is no risk of confusion, we will write $\Sigma_p$. The space of directions of a point in $P$ is homeomorphic to either the $2$-sphere $S^2$ or the real projective plane $P^2$. By Perelman's conical neighborhood, a sufficiently small open neighborhood of a point $p\in P$ is homeomorphic to $K_p$, the Euclidean cone over the space of directions $\Sigma_p$. If every point in $P$ has space of directions homeomorphic to $S^2$, then $P$ is a $3$-manifold. We will say that $P$ is \emph{topologically singular} if it contains a point whose space of directions is $P^2$. We will call such a point a \emph{topologically singular point}. Points whose space of directions is homeomorphic to $S^2$ will be called \emph{topologically regular}. We will denote topologically singular spaces in $\cA$ by $X$ or $Y$. Manifolds will be denoted by $M$ and $N$. 

   Let $X\in \cA$ be a topologically singular space. We will denote by $M_X$ the $3$-manifold with boundary obtained by removing disjoint open regular neighborhoods of the topologically singular points of $X$. We may write $X$ as the union of $M_X$ and finitely many copies of a closed cone over $P^2$. In other words, we obtain $X$ from $M_X$, a compact non-orientable $3$-manifold with finitely many $P^2$ boundary components, by capping off the boundary components with cones over $P^2$. It is not difficult to see that $M_X$ must have an even number of boundary components (see, for example, \cite[Proof of Theorem 9.5]{hempel}). Thus, $X$ must have an even number of topologically singular points. We may also exhibit $X$ as the quotient of a closed, orientable $3$-manifold by a piecewise-linear (PL) orientation-reversing involution.
   

\begin{proposition}[cf.\ \protect{\cite[Lemma 1.7]{galaz-guijarro2015}}]
\label{prop:double_branched_cover}
Let $X$ be a closed Alexandrov $3$-space. If $X$ is not a topological manifold, then there is a closed orientable $3$-manifold $\tilde{X}$ and an orientation-reversing involution $\iota\colon \tilde{X}\to \tilde{X}$ with only isolated fixed points such that $X$ is homeomorphic to the quotient $\tilde{X}/\iota$. Moreover, the involution $\iota$ is equivalent to a PL involution on $\tilde{X}$.
\end{proposition}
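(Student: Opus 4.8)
The plan is to build $\tilde X$ and $\iota$ explicitly from the structural description of $X$ recorded above. Since $X$ is not a topological manifold, it has a nonempty (even) number $2k$ of topologically singular points, and we may write $X \approx M_X\cup_\partial\big(\bigsqcup_{i=1}^{2k}K(P^2)\big)$, where $M_X$ is a connected, compact, non-orientable $3$-manifold whose boundary is the $2k$ copies of $P^2$ being capped off. (It is non-orientable because the boundary of a compact orientable $3$-manifold is orientable, whereas $P^2$ is not.) Fix a PL structure on $M_X$ --- one exists and is unique up to PL homeomorphism, since every topological $3$-manifold is triangulable by Moise's theorem --- and let $p\colon\widetilde{M_X}\to M_X$ be the orientation double cover. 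Then $\widetilde{M_X}$ is a connected, compact, orientable PL $3$-manifold, and the nontrivial deck transformation $\tau$ is a free, orientation-reversing PL involution with $\widetilde{M_X}/\tau\approx M_X$.

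The first substantive step is to understand the boundary of $\widetilde{M_X}$. A collar of each boundary component $P^2_i\subset\partial M_X$ is homeomorphic to $P^2\times[0,1)$, which is non-orientable; hence the restriction of the orientation cover over this collar is itself the (connected) orientation cover $S^2\times[0,1)\to P^2\times[0,1)$. Therefore $p^{-1}(P^2_i)$ is a single $2$-sphere $S^2_i$, it is $\tau$-invariant, and $\tau|_{S^2_i}$ is the antipodal involution of $S^2$. So $\widetilde{M_X}$ has exactly $2k$ boundary spheres, each preserved by $\tau$.

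Next I would cap off: glue a PL $3$-ball $B_i\approx D^3\subset\mathbb{R}^3$ along each $S^2_i$ to obtain a closed, orientable PL $3$-manifold $\tilde X$, and extend $\tau$ over each $B_i$ by the linear involution $x\mapsto -x$, which is PL, restricts to the antipodal map on $\partial B_i=S^2_i$, and fixes only the centre $0\in B_i$. The resulting map $\iota\colon\tilde X\to\tilde X$ is a PL involution; it is orientation-reversing because it is orientation-reversing on the connected codimension-zero piece $\widetilde{M_X}$ and $\tilde X$ is connected; and $\mathrm{Fix}(\iota)$ consists precisely of the $2k$ centres of the balls $B_i$, which are isolated. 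Finally, $\tilde X/\iota$ is obtained from $\widetilde{M_X}/\tau\approx M_X$ by attaching $B_i/(x\mapsto -x)$ along each $P^2_i$; identifying $D^3$ with the cone $K(S^2)$, the involution $x\mapsto -x$ corresponds to the cone on the antipodal map of $S^2$, and the quotient of $K(S^2)$ by the cone on the antipodal map is $K(P^2)$, so $B_i/(x\mapsto -x)\approx K(P^2)$. Hence $\tilde X/\iota\approx M_X\cup_\partial\big(\bigsqcup_{i=1}^{2k}K(P^2)\big)\approx X$. The ``moreover'' clause requires no extra work, since the entire construction was carried out in the PL category.

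Almost everything here is routine $3$-manifold topology; the one place that needs care is the boundary bookkeeping in the second paragraph, namely that each $P^2$ boundary component lifts to a \emph{single} sphere on which $\tau$ acts antipodally (rather than to two copies of $P^2$). This is exactly what forces $\mathrm{Fix}(\iota)$ to be isolated --- one point per capping ball, so $2k$ points, matching the singular set of $X$ --- and makes the homeomorphism $\tilde X/\iota\approx X$ hold on the nose. An equivalent way to package the argument is to observe that $\tilde X\to X$ is the double branched cover of $X$ branched over its finite set of topologically singular points.
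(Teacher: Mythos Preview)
Your argument is correct. The paper itself does not supply a proof of this proposition: it is stated as a known result, with a reference to \cite[Lemma~1.7]{galaz-guijarro2015}. Your construction---taking the orientation double cover of $M_X$, observing that each $P^2$ boundary component lifts to a single $\tau$-invariant sphere on which $\tau$ acts antipodally, and then capping off equivariantly by balls with the linear involution $x\mapsto -x$---is exactly the standard way to build the double branched cover, and is essentially the argument in the cited reference. The key observation, which you identify correctly, is that a collar $P^2\times[0,1)$ is non-orientable, so its preimage in the orientation cover is connected (namely $S^2\times[0,1)$); this is what guarantees each $P^2$ lifts to one sphere rather than two projective planes, and hence that the fixed points of $\iota$ are isolated.
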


We will call the $3$-manifold $\tilde{X}$ in Proposition~\ref{prop:double_branched_cover} the \emph{orientable double branched cover} of $X$. The corresponding branched covering map, given by the orbit projection map of the involution $\iota$, will be denoted by $\mathsf{p}\colon \widetilde{X}\to X$.
Observe that the fixed points of the involution on $\tilde{X}$ correspond to the topologically singular points of $X$.


\subsection{Three-dimensional blocks}
\label{S:3-dimensional.blocks}

Let us now define some special spaces which will often appear in subsequent sections. We divide them into two families: \emph{manifold blocks}, which are certain compact $3$-manifolds  with non-empty boundary, and \emph{singular blocks}, which are non-negatively curved Alexandrov spaces with non-empty boundary and topological singularities. The first family appeared in \cite{heil-larranaga} while the second one appeared in \cite{galaz-guijarro2015,mitsuishi-yamaguchi2015}.


\subsubsection{Manifold blocks}
\label{ss:special.manifolds}

Following \cite{heil-larranaga}, we will describe some compact $3$-manifolds whose boundary components may be tori, Klein bottles, or projective planes. In these examples, we will have a compact orientable $3$-manifold $M$ and an orientation-reversing involution $\tau\colon M\to M$ with $m>0$ fixed points. Then, we will choose invariant $3$-ball neighborhoods $C_1,\ldots,C_m$ of the fixed points and let $M_\ast=\overline{M\setminus\left(C_1\cup\cdots\cup C_m\right)}/\tau$ be the orbit manifold.
\\

\begin{enumerate}[label=(\roman*)]

    
    \item The \textit{geminus}. Let $M = D^2\times S^1$ and consider the orientation-reversing involution $\tau\colon
    D^2\times S^1\to D^2\times S^1$ given by $\tau(x, z) = (-x,\overline{z})$ with two isolated fixed points.
    After removing the balls as indicated above, we obtain the \textit{geminus} as the quotient $M_* = (P^2\times I)\#_b(P^2\times I)$, the boundary connected sum
    of two copies of $P^2\times I$. We will denote this space by $G$. The boundary
    of the geminus consists of two projective planes and a Klein bottle. \\
    
    \item The \textit{dipus}. Let $M=(\Kl\tilde{\times} [-1,1])_o$ be the orientable twisted interval-bundle over the Klein bottle $\Kl$. The manifold $M$ can be obtained as the quotient of $T^2\times [-1,1]$ by the involution $\sigma\colon T^2\times[-1,1]\to T^2\times[-1,1]$ given by 
    $\sigma(z_1,z_2,t) = (-z_1,\overline{z}_2,-t)$. Note that the boundary of $M$ is a torus and $M$ can also be realized as the mapping cylinder of the double cover $p\colon T^2\to \Kl$. Consider now the involution on $M$ given by $\tau([z_1,z_2,t])=[-\overline{z}_1,-z_2,t]$, which has two fixed points. The orbit manifold $M_\ast$ is the \textit{dipus} and we will denote it by $D$; its boundary consists of two projective planes and one incompressible Klein bottle.\\
    
    \item The \textit{bipod}. Let $W_1$ and $W_2$ be two copies of $(\Kl\tilde{\times}[-1,1])_o$, the orientable twisted interval-bundle over the Klein bottle $\Kl$, 
    and let $M=W_1\cup_{\varphi}W_2$, the twisted double of $(Kl\tilde{\times}I)_o$, where
    \[\varphi\colon \partial W_1\to\partial W_2\]
    is given by $\varphi([z_1,z_2,1])=[z_2,z_1,1]$. The manifold $M$ is usually known as the \emph{Hantzche--Wendt manifold}. The involution $\tau\colon M \to M$ is given by $\tau([z_1,z_2,t])=[-\overline{z}_1,-z_2,t]$ if $(z_1,z_2,t)\in W_1$ and $\tau([z_1,z_2,t])=[-z_1,-\overline{z}_2,t]$ if $(z_1,z_2,t)\in W_2$. This involution has two fixed points. The orbit manifold $M_\ast$ is the \textit{bipod} and we will denote it by $B$; its boundary consists of two projective planes.\\


    \item The \textit{quadripus}. Let $M=T^2\times [-1,1]$ and consider the involution $\tau(z_1,z_2,t)=(\overline{z}_1,\overline{z}_2,-t)$, which has four isolated fixed points. The orbit manifold $M_\ast$ is the \textit{quadripus}. We will denote it by $Q$; its boundary consists of four projective planes and one incompressible torus.\\
    
    \item The \textit{tetrapod}. Let $M=T^2\times[-1,1]/\{(z_1,z_2,1)\sim(\overline{z}_1,\overline{z}_2,-1)\}$ and consider the involution $\tau([z_1,z_2,t])=[-\overline{z_1},\overline{z_2},-t]$, which has four isolated fixed points. The orbit manifold $M_\ast$ is the \textit{tetrapod} and we will denote it by $\TP$; its boundary consists of four projective planes.\\
    

    \item The \textit{octopod}. Let $M=T^3$ and consider the involution $\tau(z_1,z_2,z_3)=(\overline{z}_1,\overline{z}_2,\overline{z}_3)$ which has eight isolated fixed points. The orbit manifold $M_\ast$ is the \textit{octopod}. We will denote it by $O$; its boundary consists of eight projective planes.
\end{enumerate}


\subsubsection{Singular blocks} 
\label{spaces} 
We define the \emph{singular blocks} as follows (cf.\ \cite{galaz-guijarro2015,mitsuishi-yamaguchi2015}).
\\

\begin{enumerate}[label=(\roman*)]
    \item We let $K(P^2)$ be the Euclidean cone over a real projective plane $P^2$.\\
    \item We let $B(S_2)=S^2\times[-1,1]/(\sigma, -\mathrm{id})$, where $S^2$ is a sphere of non-negative curvature with an isometric involution $\sigma\colon S^2\to S^2$ topologically conjugate to the involution on the sphere given by the suspension of the antipodal map on the circle. The space $B(S_2)$ is homeomorphic to $\mathrm{Susp}(P^2)\setminus\mathrm{int}(D^3)$, where $D^3\subset\mathrm{Susp}(P^2)$  is a closed $3$-ball consisting of topologically regular points (see  \cite{mitsuishi-yamaguchi2015}).\\
    \item We let $B(S_4)=T^2\times[-1,1]/(\sigma, -\mathrm{id})$, where $T^2$ is a flat torus and the involution $\sigma\colon T^2\to T^2$ maps $(z_1,z_2)$ to $(\overline{z_1},\overline{z_2})$. Observe that $T^2/\sigma$ is homeomorphic to $S^2$. The space $B(S_4)$  has four topologically singular points, corresponding to the four fixed points of the involution $(\sigma,-id)$. One may see this by noting that at each fixed point, the differential of the involution acts as the antipodal map on the corresponding unit tangent sphere. The  space $B(S_4)$ has oriented double branched cover $T^2\times[-1,1]$ and   boundary $T^2$.\\
    \item We let $B(\mathrm{pt})=D^2\times S^1/\alpha$, where $\alpha$ is an isometric involution defined on $D^2\times S^1$ by
    \[
    \alpha((x,y),z)=((-x,-y),\overline{z}).
    \]
    The space $B(\mathrm{pt})$ has two singular points corresponding to the image in the quotient of the fixed points $((0,0),1), ((0,0),-1)\in D^2\times S^1$ (cf.\  \cite[Example 1.2]{mitsuishi-yamaguchi2015}).
    The space $B(\mathrm{pt})$ is homeomorphic to the boundary-connected sum of two copies of $K(P^2)$, and thus its boundary is a Klein bottle (see, for example, the remarks before Lemma 2.61 in \cite{mitsuishi-yamaguchi2015}). 
\end{enumerate}


\begin{remark}
We may cap off the $P^2$ boundary components of manifold blocks to obtain some singular blocks. For instance, the capped-off geminus is homeomorphic to $B(\mathrm{pt})$. We will use such relations later on. 
\end{remark}


\section{Prime decomposition}
\label{s:prime.decomposition}
In this section, we will prove Theorems \ref{thm:connected.sum.depends.on.points}, \ref{thm:existence.prime.decomposition}, and \ref{thm:normal.decomposition.uniqueness} on connected sums and prime decompositions of closed Alexandrov $3$-spaces. We first define the connected sum of such spaces and prove Theorem~\ref{thm:connected.sum.depends.on.points}. We then introduce some basic notions, establish some preliminary results, and prove Theorems~\ref{thm:existence.prime.decomposition} and ~\ref{thm:normal.decomposition.uniqueness}.
We note that there exist extensions of the Kneser--Milnor prime decomposition theorem   to $3$-manifolds with boundary (see \cite{gross.1969,gross.1979,heil,Przytycki.1979,Swarup.1970}) and to certain classes of $3$-orbifolds (see \cite{petronio.2007}).


\subsection{Connected sum and proof of Theorem~\ref{thm:connected.sum.depends.on.points}} Let us start by extending some fundamental notions for $3$-manifolds to general Alexandrov $3$-spaces.

\begin{definition}[Connected sum]
\label{def:connected.sum}
Let $P,Q\in\cA$, fix $p\in P$, $q\in Q$, and let $U_p\subset P$ and $V_q\subset Q$ be, respectively, open neighborhoods of $p$ and $q$ homeomorphic to $K_p$ and $K_q$, the tangent cones at $p$ and $q$, respectively. If $K_p\approx K_q$, then there is a homeomorphism
\begin{equation*}
\varphi\colon \partial(P\setminus U_p)\approx \Sigma_p \to \partial(Q\setminus V_q)\approx \Sigma_q
\end{equation*}
and we define the \textit{connected sum of $P$ and $Q$} as
\begin{equation}
P\#^{p,q}Q =\left((P\setminus U_p)\sqcup(Q\setminus V_q)\right)/_{w\sim \varphi(w)}.
\end{equation}
\end{definition}

To avoid cumbersome notation, when there is no risk of confusion regarding the points we have used to construct a connected sum of two spaces $P,Q\in \mathcal{A}$, we will write $P\# Q$ for the usual connected sum along balls and $P\hat{\#}Q$ for the connected sum along cones over $P^2$. Note, however, that the space $P\#^{p,q}Q$ may depend on the choice of points $p\in P$ and $q\in Q$ along which we take the connected sum. To see this, let us recall some notions from \cite{Heil-Negami1986,negami1981}, which we will use in the proof of Theorem~\ref{thm:connected.sum.depends.on.points}. 


Let $M$ be a $3$-manifold and $F_0$, $F_1$ closed $2$-manifolds embedded in $\interior(M)$, the interior of $M$. The surfaces $F_0$ and $F_1$ are \textit{parallel} if there is an embedding $h\colon F\times I\to M$ such that $h(F\times\{0\})=F_0)$ and $h(F\times\{1\})=F_1)$. A \textit{complete system of projective planes} in $M$ is a sytem $\mathcal{P}=\{P^2_1,\dots,P^2_q\}$ of mutually disjoint two-sided projective planes in $\interior(M)$ satisfying the following conditions:
\begin{enumerate}
    \item Every $P^2_i$ is not parallel to each other.
    \item If $P^2_{q+1}$ is a two-sided projective plane in $M$ disjoint from $P^2_1\cup\cdots\cup P^2_q$, then $P^2_{q+1}$ is parallel to some $P^2_i$ ($i=1,\cdots, q$).
\end{enumerate}


\begin{definition}[Colored $P^2$-graph of an irreducible compact $3$-manifold]
    Let $M$ be an irreducible compact $3$-manifold whose boundary is either empty or consists only of projective planes. Construct a colored graph $G(M)$ (embedded in $M$) as follows:
    \begin{enumerate}
        \item Choose a vertex $v_i$ in each component $C_i$ of $M$ cut open along a complete system $\mathcal{P}$ of two-sided projective planes in $\interior(M)$.
        \item Color $v_i$ white if $C_i=P^2\times I$ and one component of $\partial C_i$ is a component of $\partial M$. Otherwise, color $v_i$ black. 
 \item Join $v_i$ and $v_j$ by an edge if $C_i$ and $C_j$ contains a component of $\mathcal{P}$. 
    \end{enumerate}
    \end{definition}

 Note that the degree of a white vertex is $1$ and the degree of a black vertex is even. Figures~\ref{fig:colored.graph.closed} and \ref{fig:colored.graph.boundary} show examples of colored $P^2$-graphs.


\begin{figure}[htpb]
    \centering
	\def\svgwidth{.8\textwidth}
\begingroup%
  \makeatletter%
  \providecommand\color[2][]{%
    \errmessage{(Inkscape) Color is used for the text in Inkscape, but the package 'color.sty' is not loaded}%
    \renewcommand\color[2][]{}%
  }%
  \providecommand\transparent[1]{%
    \errmessage{(Inkscape) Transparency is used (non-zero) for the text in Inkscape, but the package 'transparent.sty' is not loaded}%
    \renewcommand\transparent[1]{}%
  }%
  \providecommand\rotatebox[2]{#2}%
  \newcommand*\fsize{\dimexpr\f@size pt\relax}%
  \newcommand*\lineheight[1]{\fontsize{\fsize}{#1\fsize}\selectfont}%
  \ifx\svgwidth\undefined%
    \setlength{\unitlength}{545.29887042bp}%
    \ifx\svgscale\undefined%
      \relax%
    \else%
      \setlength{\unitlength}{\unitlength * \real{\svgscale}}%
    \fi%
  \else%
    \setlength{\unitlength}{\svgwidth}%
  \fi%
  \global\let\svgwidth\undefined%
  \global\let\svgscale\undefined%
  \makeatother%
  \begin{picture}(1,0.53824051)%
    \lineheight{1}%
    \setlength\tabcolsep{0pt}%
    \put(0,0){\includegraphics[width=\unitlength,page=1]{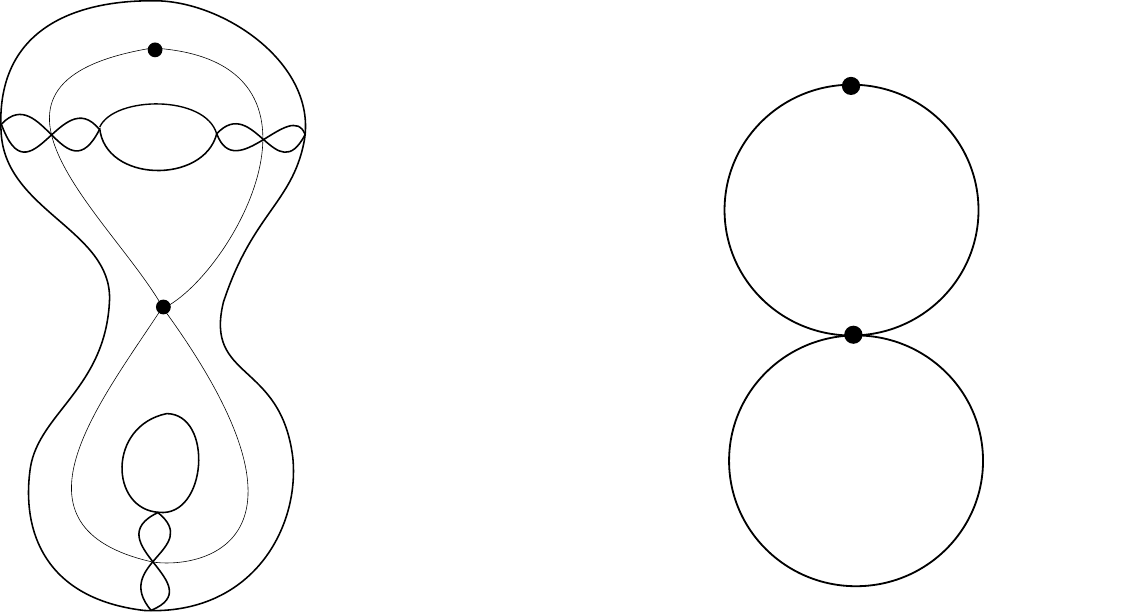}}%
    \put(0.21181046,0.23794292){\color[rgb]{0,0,0}\makebox(0,0)[lt]{\lineheight{1.25}\smash{\begin{tabular}[t]{l}$M$\end{tabular}}}}%
    \put(0.85274342,0.23381672){\color[rgb]{0,0,0}\makebox(0,0)[lt]{\lineheight{1.25}\smash{\begin{tabular}[t]{l}$G(M)$\end{tabular}}}}%
  \end{picture}%
\endgroup%

    \caption{Colored $P^2$-graph $G(M)$ for $M$ without boundary.}
    \label{fig:colored.graph.closed}
\end{figure}


\begin{figure}[htbp]
    \centering
	\def\svgwidth{.7\textwidth}
\begingroup%
  \makeatletter%
  \providecommand\color[2][]{%
    \errmessage{(Inkscape) Color is used for the text in Inkscape, but the package 'color.sty' is not loaded}%
    \renewcommand\color[2][]{}%
  }%
  \providecommand\transparent[1]{%
    \errmessage{(Inkscape) Transparency is used (non-zero) for the text in Inkscape, but the package 'transparent.sty' is not loaded}%
    \renewcommand\transparent[1]{}%
  }%
  \providecommand\rotatebox[2]{#2}%
  \newcommand*\fsize{\dimexpr\f@size pt\relax}%
  \newcommand*\lineheight[1]{\fontsize{\fsize}{#1\fsize}\selectfont}%
  \ifx\svgwidth\undefined%
    \setlength{\unitlength}{500.22533987bp}%
    \ifx\svgscale\undefined%
      \relax%
    \else%
      \setlength{\unitlength}{\unitlength * \real{\svgscale}}%
    \fi%
  \else%
    \setlength{\unitlength}{\svgwidth}%
  \fi%
  \global\let\svgwidth\undefined%
  \global\let\svgscale\undefined%
  \makeatother%
  \begin{picture}(1,0.60843221)%
    \lineheight{1}%
    \setlength\tabcolsep{0pt}%
    \put(0,0){\includegraphics[width=\unitlength,page=1]{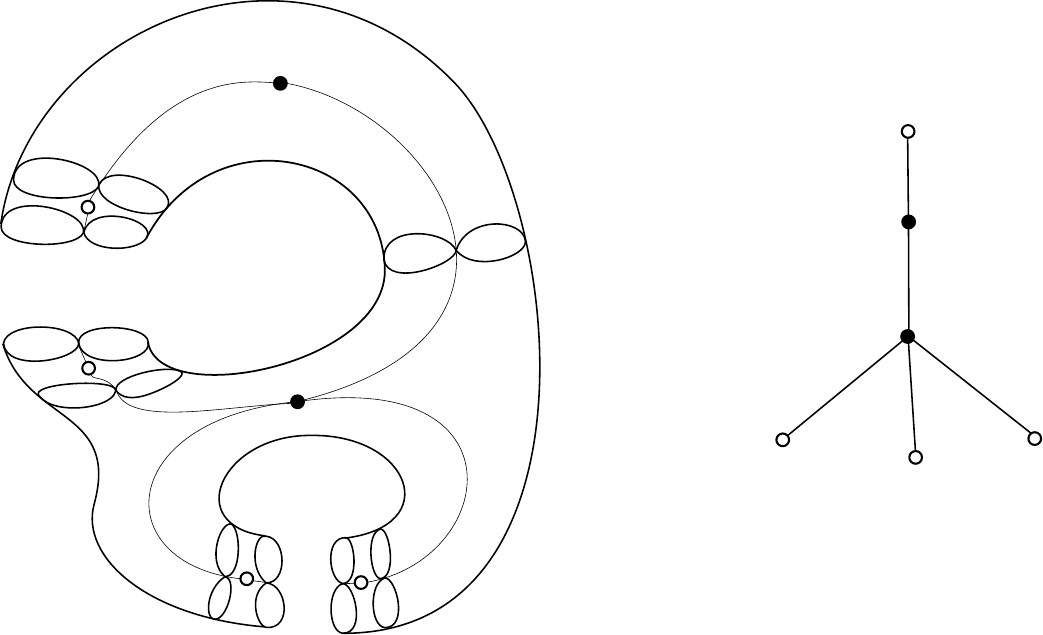}}%
    \put(0.23076812,0.33602855){\color[rgb]{0,0,0}\makebox(0,0)[lt]{\lineheight{1.25}\smash{\begin{tabular}[t]{l}$M$\end{tabular}}}}%
    \put(0.82355607,0.08375876){\color[rgb]{0,0,0}\makebox(0,0)[lt]{\lineheight{1.25}\smash{\begin{tabular}[t]{l}$G(M)$\end{tabular}}}}%
  \end{picture}%
\endgroup%

    \caption{Colored $P^2$-graph $G(M)$ for $M$ with non-empty boundary.}
    \label{fig:colored.graph.boundary}
\end{figure}

We will also use the following results from  \cite{Heil-Negami1986, negami1981}.

\begin{proposition}[\protect{cf.~\cite[Theorem~2]{negami1981}}]\label{prop:graph.even.degree.manifold}
If $G$ is a graph with all vertices of even degree, then there are infinitely many (non-homeomorphic) closed irreducible $3$-manifolds $M$ with $G(M)$ isomorphic to $G$.
\end{proposition}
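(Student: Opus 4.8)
I would reconstruct the argument behind Negami's theorem: realize $G$ as the dual graph of a decomposition of a closed irreducible $3$-manifold into projective-plane-bounded blocks, then vary one block to get infinitely many. Since $M$ is to be closed, the coloring recipe forces every vertex of $G(M)$ to be black, and black vertices have even degree, so the hypothesis on $G$ is exactly what one needs. For each vertex $v$ of degree $2k_v$, fix a compact connected $3$-manifold $C_v$ with exactly $2k_v$ boundary components, each homeomorphic to $P^2$, chosen so that $C_v$ is irreducible, is not homeomorphic to $P^2\times I$, and is \emph{$P^2$-irreducible} in the sense that every two-sided $P^2$ in $C_v$ is boundary-parallel; moreover each boundary $P^2$ should be incompressible in $C_v$. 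For $2k_v\in\{2,4,8\}$ one may take the bipod $B$, the tetrapod $\TP$, or the octopod $O$ of Section~\ref{S:3-dimensional.blocks} (these have only $P^2$ boundary), and for arbitrary even degree one takes a block of the form $\widetilde C/\iota$, with $\widetilde C$ a closed orientable irreducible $3$-manifold carrying an orientation-reversing involution $\iota$ with the prescribed number of isolated fixed points and invariant balls removed around them before quotienting; the existence of such atomic blocks of arbitrary even degree is part of what one imports from \cite{negami1981}. Because the mapping class group of $P^2$ is trivial, gluing a $P^2$-boundary of $C_i$ to a $P^2$-boundary of $C_j$ is canonical. Carrying this out along the edges of $G$, after choosing at each $v$ a bijection between the edges at $v$ and the boundary components of $C_v$, produces a closed $3$-manifold $M_G$ together with a system $\mathcal{P}$ of mutually disjoint two-sided projective planes — the images of the gluing surfaces — whose dual graph is $G$ by construction.

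\textbf{Verifying $G(M_G)\cong G$.} Two things must be checked. First, $M_G$ is irreducible: each $P^2_i\in\mathcal{P}$ is $\pi_1$-injective in every adjacent block (incompressibility of the boundary $P^2$'s of the blocks), hence $\pi_1$-injective in $M_G$ by van Kampen applied to the tree-of-spaces decomposition along $\mathcal{P}$. Given an embedded $2$-sphere $S\subset M_G$, put it in general position with respect to $\mathcal{P}$; an intersection circle inessential in the corresponding $P^2_i$ is removed by an innermost-disk isotopy using irreducibility of the blocks, while an intersection circle essential in some $P^2_i$ would be null-homotopic in $M_G$ (it bounds a disk in $S$) yet non-trivial in $P^2_i$, contradicting injectivity; so $S$ can be isotoped into a single block and bounds a ball there. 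Second, $\mathcal{P}$ is a complete system whose cut pieces are exactly the $C_v$: since each $C_v$ is $P^2$-irreducible and not $P^2\times I$, no block splits further and no two distinct $P^2_i$ become parallel, so indeed $G(M_G)\cong G$.

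\textbf{Producing infinitely many.} Fix a vertex $v_0$ and replace $C_{v_0}$ by the members of an infinite family $\{C_{v_0}^{(n)}\}_{n\in\n}$ of pairwise non-homeomorphic irreducible $P^2$-irreducible $3$-manifolds with the same $2k_{v_0}$ projective-plane boundary components; such a family can be produced by the branched-cover construction above from orientable covers $\widetilde C^{(n)}$ whose first homology grows, for instance by $\iota$-equivariant Dehn filling along a family of distinct slopes. The resulting closed manifolds $M_G^{(n)}$ all have colored $P^2$-graph $G$. If $M_G^{(n)}\approx M_G^{(m)}$, then by the canonical nature of the cut decomposition a homeomorphism would carry the block at $v_0$ to a block of the same degree, and hence (up to the finite automorphism group of $G$) would give a homeomorphism $C_{v_0}^{(n)}\approx C_{v_0}^{(m)}$; since the family is infinite, only finitely many of the $M_G^{(n)}$ can be mutually homeomorphic, so infinitely many are pairwise non-homeomorphic.

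\textbf{Main obstacle.} The subtle point is the structural claim in the second step: that $\mathcal{P}$ is genuinely a complete system and that cutting along it recovers precisely the chosen blocks. This depends on having atomic $P^2$-irreducible blocks (distinct from $P^2\times I$) of every even degree available, and on the standard but technical isotopy argument that in an irreducible $3$-manifold assembled from such pieces every two-sided $P^2$ can be pushed into the given system — this is essentially the content one takes from \cite{negami1981}. A secondary point is that the colored $P^2$-graph together with its labelling blocks must be a fine enough invariant to separate the $M_G^{(n)}$, which is why the varying family is chosen with a visibly increasing homological invariant.
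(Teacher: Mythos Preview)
The paper does not give its own proof of this proposition: it is quoted verbatim as \cite[Theorem~2]{negami1981} and used as a black box in the proof of Theorem~\ref{thm:connected.sum.depends.on.points}. So there is nothing to compare your argument against here; what you have written is a reconstruction of Negami's proof rather than of anything in the present paper.

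As a reconstruction, your outline is broadly faithful to Negami's method (build $M$ by gluing irreducible, $P^2$-irreducible ``atomic'' blocks along their $P^2$-boundaries according to $G$, then vary one block), but there is one slip worth flagging. You invoke ``van Kampen applied to the tree-of-spaces decomposition along $\mathcal{P}$'' to obtain $\pi_1$-injectivity of each $P^2_i$. The graph $G$ is arbitrary (subject only to the even-degree condition) and in particular need not be a tree; the decomposition along $\mathcal{P}$ is a \emph{graph} of spaces, and the injectivity of edge groups into the fundamental group is a Bass--Serre statement, not a bare van Kampen one. The conclusion survives, but the justification as written does not cover the case where $G$ has cycles.

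Your ``infinitely many'' step implicitly uses that a homeomorphism $M_G^{(n)}\approx M_G^{(m)}$ must carry one complete system of projective planes to (something isotopic to) the other, hence matches blocks to blocks. That is exactly the uniqueness statement you need, and it is not automatic from the construction; in the paper's language it is the content of Proposition~\ref{prop:isomorphic.P2.graphs} (i.e.\ \cite[Corollary~2]{Heil-Negami1986}) together with Negami's isotopy lemma for complete systems. Once that is granted, your pigeonhole argument (the multisets of block homeomorphism types agree, so $C_{v_0}^{(n)}\approx C_{v_0}^{(m)}$) is fine.
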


\begin{proposition}[\protect{cf.~\cite[Corollary 2]{Heil-Negami1986}}]
\label{prop:isomorphic.P2.graphs}
    Let $M_1$ and $M_2$ be compact $3$-manifolds, possibly with boundary consisting only of projective planes. If  $M_1$ and $M_2$ are homeomorphic, then their $P^2$-graphs $G(M_1)$ and $G(M_2)$ are isomorphic.
\end{proposition}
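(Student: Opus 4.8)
The plan is to deduce the statement from the assertion that the colored $P^2$-graph is, up to colored graph isomorphism, an invariant of the homeomorphism type of $M$. First note that the hypothesis is self-consistent: being a compact, irreducible $3$-manifold with boundary a (possibly empty) union of projective planes is preserved by homeomorphism, so if $G(M_1)$ is defined and $M_1\approx M_2$, then $G(M_2)$ is defined as well. The construction of $G(M)$ depends on two choices: a complete system $\mathcal{P}$ of two-sided projective planes in $\interior(M)$, and, after cutting $M$ along $\mathcal{P}$, one vertex per component. The second choice is immaterial, since each vertex merely represents its (connected) component and the edges only record which pairs of components abut along a common member of $\mathcal{P}$. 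For the first choice I would invoke the uniqueness, up to ambient isotopy and reindexing, of complete systems of two-sided projective planes in an irreducible $3$-manifold: cutting along any two such systems yields the same components with the same black/white colors — the color of a component being determined by whether it is a product $P^2\times I$ one of whose two $P^2$ faces lies in $\partial M$ — and the same adjacency pattern. This is the substantive $3$-manifold input behind \cite[Theorem~2]{negami1981} and \cite{Heil-Negami1986}, and its proof proceeds via the Projective Plane Theorem together with innermost-circle and innermost-disk arguments that use irreducibility to remove the intersection circles between two complete systems. Granting this, write $G(M;\mathcal{P})$ for the graph built from $\mathcal{P}$; then $G(M;\mathcal{P})\cong G(M;\mathcal{P}')$ for any two complete systems.

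\emph{Transport along a homeomorphism.} Let $f\colon M_1\to M_2$ be a homeomorphism and fix a complete system $\mathcal{P}_1$ in $\interior(M_1)$. Then $f(\mathcal{P}_1)$ is a collection of pairwise disjoint two-sided projective planes in $\interior(M_2)$, and it is again complete: if two of its members were parallel, pulling the connecting product region back by $f^{-1}$ would make two members of $\mathcal{P}_1$ parallel; and if $S\subset M_2$ is a two-sided projective plane disjoint from $f(\mathcal{P}_1)$, then $f^{-1}(S)$ is a two-sided projective plane disjoint from $\mathcal{P}_1$, hence parallel to a member of $\mathcal{P}_1$, so $S$ is parallel to the corresponding member of $f(\mathcal{P}_1)$. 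Cutting $M_1$ along $\mathcal{P}_1$ and $M_2$ along $f(\mathcal{P}_1)$, the map $f$ descends to a homeomorphism of the cut-open manifolds that is a bijection on components, sends a $P^2\times I$ component with a face on $\partial M_1$ to such a component with a face on $\partial M_2$, and sends two components sharing a copy of a member of $\mathcal{P}_1$ to two components sharing a copy of the corresponding member of $f(\mathcal{P}_1)$. Thus $f$ induces a colored graph isomorphism $G(M_1;\mathcal{P}_1)\cong G(M_2;f(\mathcal{P}_1))$, and combining with the previous paragraph, $G(M_1)\cong G(M_2)$.

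\emph{Main obstacle.} The transport step is purely formal: it only uses that homeomorphisms preserve interiors, boundaries, parallelism, product pieces, and adjacency. The real content is the well-definedness assertion used in the first paragraph — the uniqueness up to ambient isotopy of complete systems of two-sided projective planes in an irreducible $3$-manifold — which is exactly where the $3$-manifold machinery (Projective Plane Theorem, innermost-surface arguments, irreducibility) is needed; for this I would cite \cite{negami1981,Heil-Negami1986} rather than reproduce the argument.
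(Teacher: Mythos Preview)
Your argument is correct and follows the standard line: well-definedness of $G(M)$ up to the choice of complete system (via the isotopy uniqueness results of \cite{negami1981,Heil-Negami1986}), plus the formal observation that a homeomorphism carries a complete system to a complete system and respects colors and adjacencies. Note that the paper does not actually supply its own proof of this proposition; it simply records the statement with a citation to \cite[Corollary~2]{Heil-Negami1986} and uses it as a black box in the proof of Theorem~\ref{thm:connected.sum.depends.on.points}. Your write-up is thus a faithful unpacking of the cited result rather than an alternative to anything in the paper, and your identification of the ``main obstacle'' --- the isotopy uniqueness of complete systems in an irreducible $3$-manifold --- is exactly the substantive input that \cite{negami1981,Heil-Negami1986} provide.
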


With these preliminaries in hand, we are ready to prove Theorem~\ref{thm:connected.sum.depends.on.points}.
 \subsection*{Proof of Theorem~\ref{thm:connected.sum.depends.on.points}} Proposition~\ref{prop:graph.even.degree.manifold} implies that, if $G$ is a graph with all black vertices of even degree and $m$ white vertices of degree $1$, then there exist infinitely many compact irreducible $3$-manifolds $M$ with $G(M)$ isomorphic to $G$ as colored graphs and $\partial M$ consisting of $m$ projective planes (see \cite[Theorem 4]{Heil-Negami1986}). This allows us to construct an irreducible $3$-manifold $M$ with one essential separating $P^2$ and $\partial M$ four projective planes as in Figure~ \ref{construction_M}. We may now cap off $\partial M$ with cones over $P^2$ to get $Q$, a closed Alexandrov $3$-space with four singular points.

\begin{figure}
    \centering
	\def\svgwidth{.7\textwidth}
\begingroup%
  \makeatletter%
  \providecommand\color[2][]{%
    \errmessage{(Inkscape) Color is used for the text in Inkscape, but the package 'color.sty' is not loaded}%
    \renewcommand\color[2][]{}%
  }%
  \providecommand\transparent[1]{%
    \errmessage{(Inkscape) Transparency is used (non-zero) for the text in Inkscape, but the package 'transparent.sty' is not loaded}%
    \renewcommand\transparent[1]{}%
  }%
  \providecommand\rotatebox[2]{#2}%
  \newcommand*\fsize{\dimexpr\f@size pt\relax}%
  \newcommand*\lineheight[1]{\fontsize{\fsize}{#1\fsize}\selectfont}%
  \ifx\svgwidth\undefined%
    \setlength{\unitlength}{523.51587971bp}%
    \ifx\svgscale\undefined%
      \relax%
    \else%
      \setlength{\unitlength}{\unitlength * \real{\svgscale}}%
    \fi%
  \else%
    \setlength{\unitlength}{\svgwidth}%
  \fi%
  \global\let\svgwidth\undefined%
  \global\let\svgscale\undefined%
  \makeatother%
  \begin{picture}(1,0.52294873)%
    \lineheight{1}%
    \setlength\tabcolsep{0pt}%
    \put(0,0){\includegraphics[width=\unitlength,page=1]{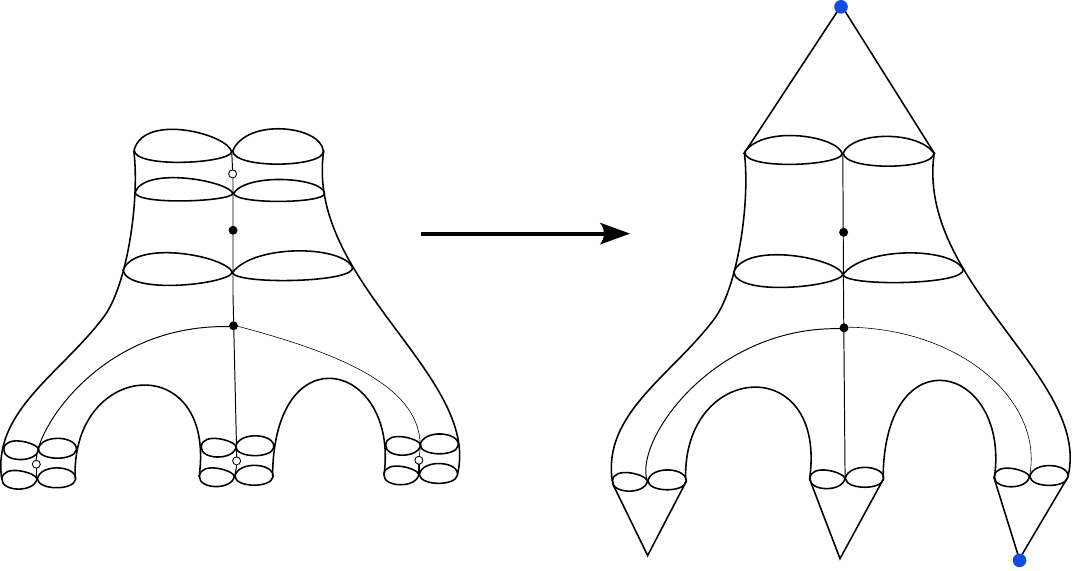}}%
    \put(0.34820903,0.34697479){\color[rgb]{0.05882353,0.0745098,0.10588235}\makebox(0,0)[lt]{\lineheight{1.25}\smash{\begin{tabular}[t]{l}$M$\end{tabular}}}}%
    \put(0.59789749,0.34520299){\color[rgb]{0.05882353,0.0745098,0.10588235}\makebox(0,0)[lt]{\lineheight{1.25}\smash{\begin{tabular}[t]{l}$Q$\end{tabular}}}}%
    \put(0.78352754,0.51342634){\color[rgb]{0.05882353,0.0745098,0.10588235}\makebox(0,0)[lt]{\lineheight{1.25}\smash{\begin{tabular}[t]{l}$q_1$\end{tabular}}}}%
    \put(0.94676362,0.00345845){\color[rgb]{0.05882353,0.0745098,0.10588235}\makebox(0,0)[lt]{\lineheight{1.25}\smash{\begin{tabular}[t]{l}$q_2$\end{tabular}}}}%
  \end{picture}%
\endgroup%

    \caption{Construction of the Alexandrov space $Q$.}
    \label{construction_M}
\end{figure}

Let $q_1$ and $q_2$ be the topologically singular points in $Q$ indicated in Figure~\ref{construction_M} and let $X_1=Q\#^{q_1,q_1} Q$ and  $X_2=Q\#^{q_2,q_2} Q$. These spaces are illustrated in Figure \ref{X1_and_X2}.

\begin{figure}
    \centering
    \def\svgwidth{1\textwidth}
\begingroup%
  \makeatletter%
  \providecommand\color[2][]{%
    \errmessage{(Inkscape) Color is used for the text in Inkscape, but the package 'color.sty' is not loaded}%
    \renewcommand\color[2][]{}%
  }%
  \providecommand\transparent[1]{%
    \errmessage{(Inkscape) Transparency is used (non-zero) for the text in Inkscape, but the package 'transparent.sty' is not loaded}%
    \renewcommand\transparent[1]{}%
  }%
  \providecommand\rotatebox[2]{#2}%
  \newcommand*\fsize{\dimexpr\f@size pt\relax}%
  \newcommand*\lineheight[1]{\fontsize{\fsize}{#1\fsize}\selectfont}%
  \ifx\svgwidth\undefined%
    \setlength{\unitlength}{540.85667407bp}%
    \ifx\svgscale\undefined%
      \relax%
    \else%
      \setlength{\unitlength}{\unitlength * \real{\svgscale}}%
    \fi%
  \else%
    \setlength{\unitlength}{\svgwidth}%
  \fi%
  \global\let\svgwidth\undefined%
  \global\let\svgscale\undefined%
  \makeatother%
  \begin{picture}(1,0.30413472)%
    \lineheight{1}%
    \setlength\tabcolsep{0pt}%
    \put(0,0){\includegraphics[width=\unitlength,page=1]{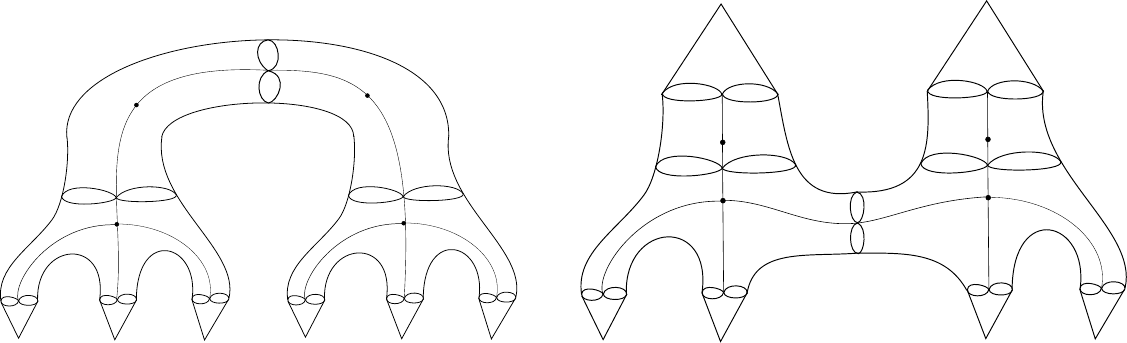}}%
    \put(0.01653684,0.22352392){\color[rgb]{0.05882353,0.0745098,0.10588235}\makebox(0,0)[lt]{\lineheight{1.25}\smash{\begin{tabular}[t]{l}$X_1$\end{tabular}}}}%
    \put(0.52450316,0.22496697){\color[rgb]{0.05882353,0.0745098,0.10588235}\makebox(0,0)[lt]{\lineheight{1.25}\smash{\begin{tabular}[t]{l}$X_2$\end{tabular}}}}%
  \end{picture}%
\endgroup%

    \caption{The two different connected sums of two copies of $Q$.}
    \label{X1_and_X2}
\end{figure}

Let us now show that $X_1$ and $X_2$ are not homeomorphic. We will proceed by contradiction.
Suppose there is a homeomorphism $h\colon X_1\to X_2$. Then $h$ sends topologically singular points to topologically singular points and, therefore, induces a homeomorphism $M_{X_1}\to M_{X_2}$, where $M_{X_i}$ is $X_i$ without open neighborhoods around the topologically singular points (see Figure \ref{M1_and_M2}). By Proposition~\ref{prop:isomorphic.P2.graphs}, $G(M_{X_1})$ and $G(M_{X_2})$ are isomorphic as colored graphs. However, this is clearly not the case as shown in Figure \ref{graphs_M1_and_M2}.
\hfill \qed

\begin{figure}
    \centering
    \def\svgwidth{1\textwidth}
\begingroup%
  \makeatletter%
  \providecommand\color[2][]{%
    \errmessage{(Inkscape) Color is used for the text in Inkscape, but the package 'color.sty' is not loaded}%
    \renewcommand\color[2][]{}%
  }%
  \providecommand\transparent[1]{%
    \errmessage{(Inkscape) Transparency is used (non-zero) for the text in Inkscape, but the package 'transparent.sty' is not loaded}%
    \renewcommand\transparent[1]{}%
  }%
  \providecommand\rotatebox[2]{#2}%
  \newcommand*\fsize{\dimexpr\f@size pt\relax}%
  \newcommand*\lineheight[1]{\fontsize{\fsize}{#1\fsize}\selectfont}%
  \ifx\svgwidth\undefined%
    \setlength{\unitlength}{540.85663082bp}%
    \ifx\svgscale\undefined%
      \relax%
    \else%
      \setlength{\unitlength}{\unitlength * \real{\svgscale}}%
    \fi%
  \else%
    \setlength{\unitlength}{\svgwidth}%
  \fi%
  \global\let\svgwidth\undefined%
  \global\let\svgscale\undefined%
  \makeatother%
  \begin{picture}(1,0.23696682)%
    \lineheight{1}%
    \setlength\tabcolsep{0pt}%
    \put(0,0){\includegraphics[width=\unitlength,page=1]{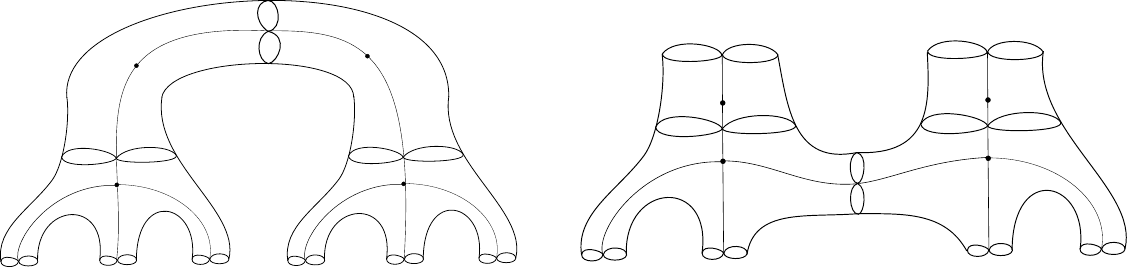}}%
    \put(0.20952944,0.1427762){\color[rgb]{0.05882353,0.0745098,0.10588235}\makebox(0,0)[lt]{\lineheight{1.25}\smash{\begin{tabular}[t]{l}$M_{X_1}$\end{tabular}}}}%
    \put(0.73661855,0.14864869){\color[rgb]{0.05882353,0.0745098,0.10588235}\makebox(0,0)[lt]{\lineheight{1.25}\smash{\begin{tabular}[t]{l}$M_{X_2}$\end{tabular}}}}%
  \end{picture}%
\endgroup%

    \caption{The $3$-manifolds $M_{X_1}$ and $M_{X_2}$ obtained from $X_1$ and $X_2$.}
    \label{M1_and_M2}
\end{figure}

\begin{figure}
	\centering
	\def\svgwidth{.9\textwidth}
\begingroup%
  \makeatletter%
  \providecommand\color[2][]{%
    \errmessage{(Inkscape) Color is used for the text in Inkscape, but the package 'color.sty' is not loaded}%
    \renewcommand\color[2][]{}%
  }%
  \providecommand\transparent[1]{%
    \errmessage{(Inkscape) Transparency is used (non-zero) for the text in Inkscape, but the package 'transparent.sty' is not loaded}%
    \renewcommand\transparent[1]{}%
  }%
  \providecommand\rotatebox[2]{#2}%
  \newcommand*\fsize{\dimexpr\f@size pt\relax}%
  \newcommand*\lineheight[1]{\fontsize{\fsize}{#1\fsize}\selectfont}%
  \ifx\svgwidth\undefined%
    \setlength{\unitlength}{535.53288978bp}%
    \ifx\svgscale\undefined%
      \relax%
    \else%
      \setlength{\unitlength}{\unitlength * \real{\svgscale}}%
    \fi%
  \else%
    \setlength{\unitlength}{\svgwidth}%
  \fi%
  \global\let\svgwidth\undefined%
  \global\let\svgscale\undefined%
  \makeatother%
  \begin{picture}(1,0.32351185)%
    \lineheight{1}%
    \setlength\tabcolsep{0pt}%
    \put(0,0){\includegraphics[width=\unitlength,page=1]{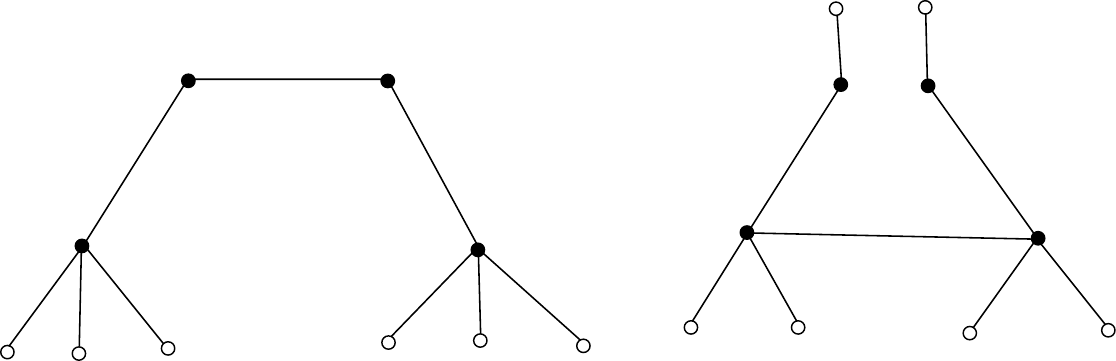}}%
    \put(0.21164451,0.15779012){\color[rgb]{0.05882353,0.0745098,0.10588235}\makebox(0,0)[lt]{\lineheight{1.25}\smash{\begin{tabular}[t]{l}$G(M_{X_1})$\end{tabular}}}}%
    \put(0.74804696,0.1563973){\color[rgb]{0.05882353,0.0745098,0.10588235}\makebox(0,0)[lt]{\lineheight{1.25}\smash{\begin{tabular}[t]{l}$G(M_{X_2})$\end{tabular}}}}%
  \end{picture}%
\endgroup%

    \caption{The colored graphs $G(M_{X_1})$ and $G(M_{X_2})$, which are not isomorphic.}
    \label{graphs_M1_and_M2}
\end{figure}

\subsection{Further definitions and basic observations.}

Let us now recall the definition of irreducibility for Alexandrov $3$-spaces (cf.\ \cite{galaz-guijarro2020}). 
Note that this definition includes topologically singular Alexandrov spaces and, if the space is a manifold, it reduces to the classical definition of irreducibility for $3$-manifolds. 


\begin{definition}[Irreducible space]\label{def:irreducibility}

A space $P\in \cA$ is \textit{irreducible} if every (PL) embedded $2$-sphere in $P$ bounds a $3$-ball and, if the set of topologically singular points of $P$ is non-empty, we further require that every two-sided (PL) $P^2$ bounds a $K(P^2)$, a cone over $P^2$.
\end{definition}

 
\begin{definition}[Boundary-parallel projective plane]
Let $X\in \cA$ be topologically singular. A projective plane $P^2 \subset M_X$
is \emph{boundary parallel} if $P^2$ is  parallel to a boundary component of $M_X$.
\end{definition}


\begin{remark}
\label{rem:push.to.manifold.part}
If $S$ is a $2$-sphere or projective plane in $X$, then, by a small deformation, we may push off $S$ from the singular points and we will assume, without loss of generality, that $S\subset M_X$. Note that a $2$-sphere $S\subset X$ bounds a ball in $X$ if and only if $S$ bounds a ball in $M_X$. This is because a $3$-ball does not contain a $P^2$, which can be seen using Alexander duality.
\end{remark}


\begin{lemma}
\label{lem:irreducibility.characterization} 
Let $X$ be a closed topologically singular  Alexandrov $3$-space. Then $X$ is irreducible if and only if $M_X$ is irreducible and every two-sided projective plane in $M_X$ is boundary-parallel. 
\end{lemma}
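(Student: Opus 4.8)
The plan is to prove both implications by translating the defining conditions for irreducibility of $X$ (Definition~\ref{def:irreducibility}) across the decomposition $X = M_X \cup (\text{cones over } P^2)$ using Remark~\ref{rem:push.to.manifold.part}. For the forward direction, assume $X$ is irreducible. To see $M_X$ is irreducible, take any embedded $2$-sphere $S \subset M_X$; it is also a $2$-sphere in $X$, so by irreducibility of $X$ it bounds a ball $D^3$ in $X$. I would argue that this ball is disjoint from the cone points: if a cone point lay in $D^3$, then a small $P^2$ link of that cone point would be contained in $D^3$, contradicting the fact (noted in Remark~\ref{rem:push.to.manifold.part}, via Alexander duality) that a $3$-ball contains no embedded $P^2$. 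Hence $D^3 \subset M_X$ and $M_X$ is irreducible. Next, to see every two-sided $P^2 \subset M_X$ is boundary-parallel: such a $P^2$ sits in $X$, so irreducibility of $X$ gives that it bounds a $K(P^2)$ in $X$. That $K(P^2)$ contains exactly one cone point; removing a small open cone neighborhood of that point turns $K(P^2)$ into $P^2 \times I$, exhibiting the original $P^2$ as parallel to the boundary $P^2$ of $M_X$ created by deleting that cone point. So $P^2$ is boundary-parallel.

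**The converse.** Now assume $M_X$ is irreducible and every two-sided $P^2$ in $M_X$ is boundary-parallel; I want $X$ irreducible. Given an embedded $2$-sphere $S \subset X$, push it off the singular points (Remark~\ref{rem:push.to.manifold.part}) so $S \subset M_X$; irreducibility of $M_X$ gives a ball in $M_X \subset X$ bounded by $S$. Given a two-sided $P^2 \subset X$, again push it into $M_X$; by hypothesis it is parallel to some boundary component $P^2_0$ of $M_X$, so it cobounds a product region $P^2 \times I$ with $P^2_0$. Since $P^2_0$ is capped off in $X$ by a cone $K(P^2)$, the union $(P^2 \times I) \cup K(P^2)$ is again a $K(P^2)$, and the given $P^2$ bounds it. Hence $X$ satisfies both conditions of Definition~\ref{def:irreducibility} and is irreducible.

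**Anticipated main obstacle.** The delicate point is the forward direction's claim that a ball in $X$ bounded by a $2$-sphere $S \subset M_X$ must avoid the singular points, and the analogous claim that a $K(P^2)$ bounded by a $P^2 \subset M_X$ contains exactly one singular point and is "standard" near it. The first is handled by the no-$P^2$-in-a-ball observation from Remark~\ref{rem:push.to.manifold.part}. For the second, one must check that the region cut off by the $P^2$ — which a priori could be complicated — is genuinely a cone over $P^2$ and not merely some space with $P^2$ boundary; but irreducibility of $X$ asserts precisely that the $P^2$ bounds a $K(P^2)$, and a $K(P^2)$ has a unique non-manifold point, so there is no real ambiguity. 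A secondary technical matter is ensuring all relevant embeddings and isotopies can be taken PL and that "pushing off" a surface from the cone points does not create new intersections; this is routine general position in the PL category, and I would simply invoke Remark~\ref{rem:push.to.manifold.part} rather than belabor it. With these points in place, the proof is a straightforward unwinding of definitions.
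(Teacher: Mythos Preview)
Your proposal is correct and follows essentially the same approach as the paper's proof: both directions are handled by translating between $X$ and $M_X$ via Remark~\ref{rem:push.to.manifold.part}, using the no-$P^2$-in-a-ball fact for spheres and the observation that a $K(P^2)$ minus its apex cone is $P^2\times I$ for projective planes. The only cosmetic difference is that the paper first argues explicitly that a two-sided $P^2\subset M_X$ must separate (else it could not bound a $K(P^2)$), whereas you invoke the $K(P^2)$ directly from irreducibility of $X$ and read off separation and boundary-parallelism from that; these are equivalent orderings of the same argument.
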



\begin{proof}


Suppose first that $X$ is irreducible. 
Let $S$ be an embedded $2$-sphere in $M_X\subset X$. 
Then, since $X$ is irreducible, $S$ bounds a $3$-ball in $X$. 
It follows from Remark~\ref{rem:push.to.manifold.part} that $S$ also bounds a $3$-ball in $M_X$. Hence, $M_X$ is irreducible.
 
Let now $S$ be a two-sided projective plane in $M_X\subset X$. We will show that $S$ is boundary-parallel. Let us show first that $S$ must separate  $M_X$, i.e.\ that $M_X\setminus S$ consists of two disjoint connected components $M_1$ and $M_2$.  Suppose, for the sake of contradiction, that $S$  does not separate $M_X$. Then $S$ does not separate $X$ and, therefore, $S$ does not bound a $K(P^2)$ in $X$, contradicting our assumption that $X$ is irreducible. Hence, $S$ must separate $M_X$ into two $3$-manifolds $M_1$ and $M_2$ and, therefore, $S$ must also separate $X$. Since $S$ is a two-sided projective plane and $X$ is irreducible, it follows that one of the two connected components into which $S$ separates $X$ must be a $K(P^2)$. It follows that one of $M_1$ or $M_2$ is homeomorphic to $P^2\times [0,1]$ with $S$ corresponding to one of the two boundary components. Since the other boundary component of $P^2\times [0,1]$ is, by construction, a component of $\partial M_X$, it follows that $S$ is boundary-parallel. This finishes the proof of the ``if'' part of the lemma.

  
Suppose now that $M_X$ is irreducible and every two-sided projective plane in $M_X$ is boundary-parallel. Let $S$ be an embedded $2$-sphere in $X$. Since $M_X$ is irreducible, it follows from  Remark~\ref{rem:push.to.manifold.part} that $S$ bounds a ball in $M_X$. In particular, $S$ bounds a ball in $X$. Let now $S$ be a two-sided projective plane $P^2$ in $X$. Then $P^2$ is parallel in $M_X$ to a projective plane $P'$ in $\partial M_X$. Since $P'$ bounds a cone in $X$, it follows that $P^2$ bounds a cone $K(P^2)$ in $X$.
 \end{proof}


\begin{definition}[Prime space]
\label{def:prime.space}
\label{prime}
A space $P\in \cA$ is \textit{prime} if, whenever \[P=Q_1\#^{q_1,q_2}Q_2\] for some $Q_1,Q_2\in\cA$ and points $q_1\in Q_1$, $q_2\in Q_2$, then one of the following conditions is satisfied:
\begin{enumerate}
\item If $\Sigma_{q_1}\approx \Sigma_{q_2}\approx S^2$, then either $Q_1\approx S^3$ or $Q_2\approx S^3$.
\item If $\Sigma_{q_1}\approx \Sigma_{q_2}\approx P^2$, then either $X\approx \mathrm{Susp}(P^2)$ or $Y\approx \mathrm{Susp}(P^2)$.
\end{enumerate}
We say that $P$ has a \emph{prime decomposition} if it can be written as a connected sum of prime spaces.
\end{definition}


\begin{remark}\label{prime_manifold}
Both $S^3$ and $\Susp(P^2)$ are prime.
Note that, if every \textit{separating} $2$-sphere in $P\in \cA$ bounds a ball,  and every \textit{separating} projective plane $P^2$ in $P$ bounds a cone $K(P^2)$ in $P$, then $P$ is prime.
\end{remark}


Let us now define a \textit{normal prime decomposition}, a special type of prime decomposition for closed Alexandrov $3$-spaces (cf.~\cite{heil} for the manifold definition).

\begin{definition}[Normal prime decomposition]
\label{def:normal.prime.decomposition}
    Let $X\in\mathcal{A}$. Then $M_X$ is a non-orientable $3$-manifold with boundary a collection of projective planes, and $M_X$ has a unique prime factorization with respect to the manifold connected sum $\#$ given by
    \[
    M_X =M_1 \# \cdots \# M_n \# l(S^1 \tilde{\times}S^2 ),
    \]
    where $l(S^1\tilde{\times}S^2)$ denotes the connected sum of $l\geq 0$ non-orientable $S^2$-bundles over $S^1$. A \emph{normal prime decomposition} of $X$ is a prime decomposition given by
    \[X=(P_{11}\hat{\#} \cdots \hat{\#} P_{1k_1})\# \cdots \#(P_{n1}\hat{\#} \cdots \hat{\#} P_{nk_n}) \# l(S^1 \tilde{\times}S^2 )\]
    where $\widehat{M}_i =P_{i1}\hat{\#} \cdots \hat{\#} P_{ik_i}$ is a prime factorization with respect to the non-manifold connected sum $\hat{\#}$ of $\widehat{M_i }$, the Alexandrov $3$-space obtained by capping off the $P^2$ boundary components  of $M_i$.
\end{definition}

\begin{remark}
When $X\in \mathcal{A}$ is a manifold, then $X$ admits a unique normal prime decomposition (see \cite[Proposition on p.~143]{heil}).
\end{remark}

The following proposition extends a basic result for $3$-manifolds to general closed Alexandrov $3$-spaces (see, for example, \cite[Ch.~3]{hempel} or \cite[Lemma 1]{milnor}).


\begin{proposition}
\label{prop:irreducible.then.prime}
If $P\in \cA$ is irreducible, then $P$ is prime.
\end{proposition}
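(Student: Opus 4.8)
The plan is to prove the contrapositive: if $P \in \cA$ is not prime, then $P$ is not irreducible. So suppose $P = Q_1 \#^{q_1,q_2} Q_2$ is a nontrivial connected sum, meaning that the conditions in Definition~\ref{def:prime.space} fail. There are two cases according to the type of the gluing sphere $\Sigma_{q_1} \approx \Sigma_{q_2}$.

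\textbf{Case 1: the gluing is along $2$-spheres.} Here $P = Q_1 \# Q_2$ with neither $Q_i$ homeomorphic to $S^3$. Let $S \subset P$ be the embedded $2$-sphere along which the connected sum is performed (the image of the common boundary). If $S$ bounded a $3$-ball $B$ in $P$, then one of the $Q_i$ would be obtained from $B$ by capping off with a ball, hence homeomorphic to $S^3$, a contradiction. (One should check $S$ is genuinely a PL $2$-sphere in $P$ and that its complement is $(Q_1 \setminus \text{ball}) \sqcup (Q_2 \setminus \text{ball})$, so a ball bounded by $S$ on one side would force that side to be $Q_i$ minus a ball.) Hence $S$ is an embedded $2$-sphere not bounding a ball, so $P$ is not irreducible.

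\textbf{Case 2: the gluing is along $P^2$'s.} Here $P = Q_1 \hat\# Q_2$ with the topologically singular set nonempty, and neither $Q_i$ is homeomorphic to $\Susp(P^2)$. Let $S \subset P$ be the embedded two-sided $P^2$ along which the sum is taken. If $S$ bounded a cone $K(P^2)$ on one side, then the corresponding $Q_i$ would be obtained from a $K(P^2)$ (a cone over $P^2$, i.e.\ $P^2 \times [0,1)$ filled in at a point, which after removing a small cone neighborhood of $q_i$ is $P^2 \times I$) by capping with a cone, giving $Q_i \approx \Susp(P^2)$ --- a contradiction. So $S$ is a two-sided $P^2$ in $P$ that does not bound a $K(P^2)$, and again $P$ is not irreducible. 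Here I would use Remark~\ref{rem:push.to.manifold.part} to move $S$ into the manifold part $M_P$ and argue at the level of $M_P$ if convenient, and Lemma~\ref{lem:irreducibility.characterization} to translate between irreducibility of $X$ and properties of $M_X$.

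The main obstacle, and the place to be careful, is the bookkeeping around what ``$S$ bounds a ball/cone'' forces on the summands: one must verify that the piece of $P$ cut off by $S$ on one side is exactly $Q_i$ with an open cone neighborhood of $q_i$ removed, so that filling that side with a ball (resp.\ cone) recaptures $Q_i$ exactly. A minor subtlety is the degenerate/trivial cases of the connected sum (e.g.\ when a $U_p$ is all of $P$ minus a point, or when $P$ itself is $S^3$ or $\Susp(P^2)$); these should be handled by noting that in such cases $P$ is prime to begin with, so the contrapositive hypothesis is not even in force. Finally, one should remark that this is the expected generalization of the classical fact for $3$-manifolds, and that Remark~\ref{prime_manifold} gives a quick alternative phrasing: irreducibility says \emph{every} embedded $2$-sphere bounds a ball and every two-sided $P^2$ bounds a cone, which in particular covers all separating ones, so $P$ is prime.
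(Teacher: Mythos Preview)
Your proof is correct and follows essentially the same approach as the paper's: the gluing surface in a connected-sum decomposition is a separating embedded $2$-sphere or two-sided $P^2$, and irreducibility forces it to bound a $3$-ball or a $K(P^2)$, making one summand trivial. The paper phrases this directly rather than as a contrapositive and splits first into the manifold/non-manifold cases, but the substance is identical; your closing observation via Remark~\ref{prime_manifold} is in fact the cleanest summary of the argument.
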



\begin{proof}
If $P$ is a manifold, then the assertion is well-known and the proposition follows from the fact that our definition of irreducibility coincides with that of irreducibility for manifolds. Assume then that $P$ is not a manifold and suppose that there exist $Q_1,Q_2\in \cA$ and points $q_1\in Q_1$, $q_2\in Q_2$ such that $P=Q_1\#^{q_1,q_2}Q_2$. Then, $\partial(Q_1\setminus U_{q_1}) = \partial(Q_2\setminus V_{q_2})$ is a two-sided embedded sphere or projective plane in $P$, where $U_{q_1}$ and $V_{q_2}$ are sufficiently small open neighborhoods of $q_1$ and $q_2$ homeomorphic to $K_{q_1}$ and $K_{q_2}$, respectively. Hence, since $P$ is irreducible, one of $Q_1\setminus U_{q_1}$ or $Q_2\setminus V_{q_2}$ must be homeomorphic to a $3$-ball or a $K(P^2)$. In particular, one of $Q_1$ or $Q_2$ must be $S^3$ or $\Susp(P^2)$ and hence $P$ must be prime. 
\end{proof}

We now prove two results on prime spaces that we will use further below.


\begin{lemma}
\label{lem:prime.iff.mx.prime.separating.p2.is.boundary.parallel.heil.3.2.3}
Let $X$ be a closed topologically singular  Alexandrov  $3$-space. Then $X$ is prime if and only if $M_X$ is prime and every separating $P^2$ in $M_X$ is boundary-parallel.
\end{lemma}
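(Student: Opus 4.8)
The plan is to reduce the statement to a uniform characterization of primeness in $\cA$: a connected $P\in\cA$ is prime if and only if every separating $2$-sphere in $P$ bounds a $3$-ball and every separating projective plane in $P$ bounds a cone $K(P^2)$. The ``if'' direction here is exactly Remark~\ref{prime_manifold}; for the ``only if'' direction I would, given a separating $2$-sphere (resp.\ projective plane) $S$ in $P$, cap off the two sides of $S$ with $3$-balls (resp.\ with cones $K(P^2)$) to realize a connected sum decomposition $P=Q_1\#^{q_1,q_2}Q_2$, and then invoke Definition~\ref{def:prime.space} to conclude that one side of $S$ is a ball (resp.\ a $K(P^2)$, since $\Susp(P^2)$ minus an open cone neighbourhood of a vertex is a $K(P^2)$). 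The special case without the singular clause is the familiar fact that a connected compact $3$-manifold is prime iff every separating $2$-sphere in it bounds a ball. Two bookkeeping observations are then needed, both immediate from the description of $X$ as $M_X$ with cones attached along $\partial M_X$: first, a $2$-sphere or projective plane lying in $M_X$ (hence disjoint from $\partial M_X$) separates $X$ if and only if it separates $M_X$, and the two complementary regions in $X$ are obtained from those in $M_X$ by capping off the inherited $\partial M_X$-components; second, Remark~\ref{rem:push.to.manifold.part} lets me push any $2$-sphere or projective plane in $X$ into $M_X$ and identifies ``bounds a ball in $X$'' with ``bounds a ball in $M_X$''.

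For the forward implication, assume $X$ is prime. First I would show $M_X$ is prime: any separating $2$-sphere $S$ in $M_X$ separates $X$, hence bounds a ball in $X$ by the characterization, hence bounds a ball in $M_X$; since every separating $2$-sphere in $M_X$ bounds a ball, $M_X$ is prime. Next I would show that every separating projective plane $S\subset M_X$ is boundary-parallel: it separates $X$ into $X_1\cup_S X_2$, and since $X$ is prime, $S$ bounds a $K(P^2)$ in $X$, say $X_1\approx K(P^2)$. As $X$ is closed, $\partial X_1=S$; removing an open cone neighbourhood of the unique topologically singular point of $K(P^2)$ identifies $X_1\cap M_X$ with $P^2\times[0,1]$, one boundary component being $S$ and the other a component of $\partial M_X$. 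Hence $S$ is boundary-parallel in $M_X$.

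For the reverse implication, assume $M_X$ is prime and every separating projective plane in $M_X$ is boundary-parallel; I want to verify the two clauses of the characterization for $X$. Given a separating $2$-sphere $S$ in $X$, I push it into $M_X$, where it stays separating, hence bounds a ball in $M_X$ (as $M_X$ is prime), hence bounds a ball in $X$. Given a separating projective plane $S$ in $X$, I push it into $M_X$, where it is a separating $P^2$, hence by hypothesis boundary-parallel: it cobounds a region homeomorphic to $P^2\times[0,1]$ in $M_X$ with some component $P'$ of $\partial M_X$. Adjoining to this region the cone $K(P^2)$ that caps off $P'$ in $X$ yields a copy of $K(P^2)$ in $X$ with boundary $S$, so $S$ bounds a $K(P^2)$ in $X$. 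By the characterization, $X$ is prime.

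I expect the only genuinely delicate point to be the projective-plane step of the forward implication, namely verifying that ``$S$ bounds a $K(P^2)$ in the closed space $X$'' is precisely the capped-off avatar of ``$S$ is boundary-parallel in $M_X$''. This relies on $X$ being closed, so that the $K(P^2)$-side of $S$ has boundary exactly $S$, and on $K(P^2)$ having a single topologically singular point, so that excising its cone neighbourhood leaves exactly a product collar $P^2\times[0,1]$ meeting $\partial M_X$ in one component; everything else amounts to routine translation between $X$ and $M_X$ via Remarks~\ref{rem:push.to.manifold.part} and \ref{prime_manifold} and the definition of prime.
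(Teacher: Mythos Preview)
Your proof is correct and follows essentially the same approach as the paper: both directions proceed by translating separating spheres and projective planes between $X$ and $M_X$, using Remark~\ref{rem:push.to.manifold.part} for spheres and the collar description $K(P^2)\setminus\mathrm{int}\,K(P^2)\approx P^2\times[0,1]$ for projective planes, together with the sphere-characterization of primeness for $3$-manifolds. Your explicit preliminary characterization of primeness in $\cA$ (every separating $S^2$ bounds a ball and every separating $P^2$ bounds a $K(P^2)$) is a clean way to package what the paper does directly via Definition~\ref{def:prime.space} and Remark~\ref{prime_manifold}, but the underlying argument is the same.
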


\begin{proof}
Suppose $X$ is prime. If there exist 3-manifolds $N_1$ and $N_2$ such that $M_X = N_1\# N_2$, then the 2-sphere separating $M_X$ separates $X$ into $X_1\# X_2$ with $N_i = M_{X_i}, i = 1,2$. It follows that $X_1$ or $X_2$ is $S^3$ and $M_{X_1}$ or $M_{X_2}=S^3$. If there exists a separating projective plane $P^2$ in $M_X$, then $P^2$ bounds  $K(P^2)$ in $X$ and it follows that $P^2$ is boundary-parallel in $M_X$.
 
 Now suppose $M_X$ is irreducible and every two-sided projective plane in $M_X$ is
boundary-parallel. If  there is a sphere $S^2$ that separates $X$, then $S^2$  separates $M_X$. Since $M_X$ is irreducible, $S^2$ bounds a ball in $M_X$ and hence in $X$. If 
 there is a projective plane $P^2$ in $X$ that decomposes $X$ as $X_1 {\hat\#}X_2$, then $P^2$ is parallel to a boundary component of $\partial M_X$ and hence bounds a $K(P^2)$ in $X$, i.e.\ $X_1$ or $X_2 \approx \Susp(P^2)$.
\end{proof}


\begin{proposition}
\label{prop:prime.not.irreducible}
If $P\in \cA$ is prime and not irreducible, then either $P$ is homeomorphic to $S^2{\times}S^1$ or  $S^2{\tilde\times}S^1$ (the non-orientable $S^2$-bundle over $S^1$), or $P$ is not a manifold and contains a non-separating $P^2$.
\end{proposition}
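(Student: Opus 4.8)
The plan is to treat the manifold and non-manifold cases separately. If $P$ is a topological $3$-manifold, then it is closed (being an element of $\cA$) and Definition~\ref{def:prime.space} reduces to the usual notion of a prime $3$-manifold; hence the classical structure theorem for prime $3$-manifolds (see, e.g., \cite[Ch.~3]{hempel}) applies and shows that $P$, being prime and not irreducible, is homeomorphic to $S^2\times S^1$ or to the non-orientable $S^2$-bundle $S^2\tilde{\times}S^1$. This settles the first alternative, so from now on assume $P=X$ is topologically singular and write $M_X$ for its manifold part, a compact non-orientable $3$-manifold whose nonempty boundary consists of projective planes.

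Since $X$ is prime, Lemma~\ref{lem:prime.iff.mx.prime.separating.p2.is.boundary.parallel.heil.3.2.3} gives that $M_X$ is prime and that every separating projective plane in $M_X$ is boundary-parallel, while, since $X$ is not irreducible, Lemma~\ref{lem:irreducibility.characterization} gives that either $M_X$ is not irreducible or there is a two-sided projective plane in $M_X$ that is not boundary-parallel. The first step is to rule out the possibility that $M_X$ is not irreducible. In that case there is a $2$-sphere $S\subset M_X$ bounding no ball in $M_X$, and hence (by Remark~\ref{rem:push.to.manifold.part}) no ball in $X$. If $S$ separates $M_X$, then it also separates $X$ — each boundary $P^2$ of $M_X$, and hence each capping cone, lies on a single side of $S$ — so $X$ is a connected sum $X_1\#X_2$ along $S$; primeness of $X$ forces one summand, say $X_1$, to be $S^3$, but then $X_1$ has no topologically singular points, so the corresponding side of $S$ in $M_X$ has no $P^2$ boundary and, capped off with a ball, is $S^3$, whence $S$ bounds a ball in $M_X$ — a contradiction. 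If instead $S$ does not separate $M_X$, cutting along $S$ and capping the two new boundary spheres with balls yields $\hat{M}'$ with $M_X\approx \hat{M}'\#(S^2\times S^1)$ or $M_X\approx\hat{M}'\#(S^2\tilde{\times}S^1)$; primeness of $M_X$ then forces $\hat{M}'\approx S^3$, which is impossible since $\hat{M}'$ retains the nonempty $P^2$ boundary of $M_X$. Hence $M_X$ must be irreducible.

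We are therefore left with the case that there is a two-sided projective plane $\Pi\subset M_X$ which is not boundary-parallel. By Lemma~\ref{lem:prime.iff.mx.prime.separating.p2.is.boundary.parallel.heil.3.2.3}, every separating $P^2$ in $M_X$ is boundary-parallel, so $\Pi$ cannot separate $M_X$; thus $M_X\setminus\Pi$ is connected, and since it contains all of $\partial M_X$, gluing back the cones over the boundary projective planes keeps it connected. Hence $X\setminus\Pi$ is connected, i.e., $\Pi$ is a non-separating projective plane in the non-manifold space $X$, which is precisely the third alternative. I expect the main obstacle to be the bookkeeping in the step ruling out ``$M_X$ not irreducible'': one must match connected-sum decompositions of $X$ with those of $M_X$ and track where the topologically singular points and the $P^2$ boundary components go. The key simplification is that an $S^3$ summand can contain neither a topologically singular point nor a $P^2$ boundary component, which makes both subcases collapse at once.
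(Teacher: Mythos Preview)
Your proof is correct and follows essentially the same approach as the paper's: both reduce to the manifold part $M_X$, use primeness to rule out the existence of an essential $2$-sphere (the paper via primeness of $M_P$ directly, you via a slightly more explicit case split on separating versus non-separating), and then invoke Lemma~\ref{lem:prime.iff.mx.prime.separating.p2.is.boundary.parallel.heil.3.2.3} to conclude that the remaining two-sided $P^2$ must be non-separating. The only organizational difference is that you route the failure of irreducibility through Lemma~\ref{lem:irreducibility.characterization} rather than the definition, and you spell out the passage from ``non-separating in $M_X$'' to ``non-separating in $X$'', which the paper leaves implicit.
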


\begin{proof}
If $P$ is a manifold, then, since our notion of irreducibility coincides with irreducibility for $3$-manifolds, it is well-known that $P$ must be homeomorphic to $S^2\times S^1$ or $S^2\tilde{\times} S^1$. 

Suppose now that $P$ is not a manifold. Since $P$ is not irreducible, there exists an embedded surface $S\subset X$ which is either an $S^2$ which does not bound a $3$-ball or a two-sided $P^2$ in which does not bound a $K(P^2)$. Suppose first that $S$ is an $S^2$. Then, by Remark~\ref{rem:push.to.manifold.part}, the $2$-sphere $S$ cannot bound a ball in $M_P$ and, by Lemma~\ref{lem:prime.iff.mx.prime.separating.p2.is.boundary.parallel.heil.3.2.3}, $M_P$ is prime. Since $M_P$ is prime and $S$ does not bound a $3$-ball, $S$ cannot separate $M_P$. Moreover, $S$ is not boundary-parallel, since every boundary component of $M_P$ is homeomorphic to $P^2$. Then, by a standard argument in $3$-manifold topology, $M_P$ splits as a connected sum of some $3$-manifold $N_P$ and $S^2\times S^1$ or $S^2\tilde{\times} S^1$. Thus, $M_P$, and, in turn, $P$, is not prime, which is a contradiction. Therefore, $S$ must be a two-sided $P^2$ that does not bound a $K(P^2)$ in $P$. Therefore, $S$ is not boundary-parallel in $M_P$. Thus, by Lemma~\ref{lem:prime.iff.mx.prime.separating.p2.is.boundary.parallel.heil.3.2.3}, $S$ is non-separating.
\end{proof}


\subsection{Double branched covers and irreducibility} 
We now discuss some relations between the irreducibility of a topologically singular closed Alexandrov $3$-space $X$ and that of its double branched cover $\widetilde{X}$. Let us start with the following observations. 


\begin{lemma}
\label{lem:involution.3.ball}
Let $\iota\colon B^3\to B^3$ be a PL involution on the $3$-ball that restricts to the antipodal map on $S^2=\partial B^3$. Then $\iota$ is the cone over the antipodal map on $S^2$.
\end{lemma}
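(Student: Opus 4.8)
The plan is to determine $\mathrm{Fix}(\iota)$, put $\iota$ into standard form near it, and then recognize the orbit space $B^3/\iota$ as the cone $K(P^2)$.

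First I would show that $\mathrm{Fix}(\iota)$ is a single point. Since the antipodal map $a$ is free, $\mathrm{Fix}(\iota)\subset\interior(B^3)$. Triangulating $B^3$ so that $\iota$ is simplicial, Smith theory gives that $\mathrm{Fix}(\iota)$ is $\mathbb{Z}/2$-acyclic, hence nonempty and connected. Near an interior fixed point the induced involution on the link $S^2$ is a PL involution of the $2$-sphere, hence standard (conjugate to an orthogonal transformation); so $\mathrm{Fix}(\iota)$ is locally a cone over $\emptyset$, $S^0$, $S^1$, or $S^2$, i.e.\ a PL submanifold of $\interior(B^3)$ which near each of its points has empty boundary. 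Thus $\mathrm{Fix}(\iota)$ is a closed PL manifold, and being $\mathbb{Z}/2$-acyclic it must be a single point $c$: a closed manifold of positive dimension has nonzero top $\mathbb{Z}/2$-homology, while a $3$-dimensional fixed component would be all of $B^3$ and force $\iota=\mathrm{id}$.

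Next, the link involution at $c$ is free, hence the antipodal map, so $c$ has an invariant PL ball neighborhood $B_c$ on which $\iota$ is the cone over $a$. Set $N=\overline{B^3\setminus B_c}\approx S^2\times I$; then $\iota$ acts freely on $N$, restricting to $a$ on each of the boundary spheres $\partial B^3$ and $\partial B_c$, so $W:=N/\iota$ is a compact non-orientable $3$-manifold with $\partial W\approx P^2\sqcup P^2$ and $\pi_1(W)\cong\mathbb{Z}/2$, and $N\to W$ is its orientation double cover. The crucial step is to prove $W\approx P^2\times I$, which yields $B^3/\iota\approx K(P^2)$. For this, each boundary component $P^2\hookrightarrow W$ is $\pi_1$-injective and lifts to a homotopy equivalence $S^2\times\{j\}\hookrightarrow S^2\times I$ of universal covers, hence is a homotopy equivalence; so $W$ is an $h$-cobordism between two copies of $P^2$, and it must be $P^2\times I$ — by the classification of compact $3$-manifolds with $S^2\times\mathbb{R}$ geometry ($P^2\times I$ being the only one with boundary $P^2\sqcup P^2$), or directly: $\widetilde W$ is a simply-connected compact $3$-manifold homotopy equivalent to $S^2$ with two spherical boundary components, hence $\widetilde W\approx S^2\times I$ by the Poincaré conjecture, and the orientation-reversing free deck transformation with the prescribed boundary behaviour has quotient $P^2\times I$.

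Finally, choosing the homeomorphism $W\approx P^2\times I$ to be the identity on the component coming from $\partial B^3$ and lifting it through the orientation double covers gives an $\iota$-equivariant homeomorphism $(N,\iota)\approx(S^2\times I,\ a\times\mathrm{id})$ equal to the identity on $\partial B^3$; gluing it to the standard model on $B_c$, and using that attaching an external collar to $\mathrm{Cone}(S^2)$ along its boundary reproduces $\mathrm{Cone}(S^2)$ equivariantly, produces a homeomorphism $B^3\to B^3$, the identity on $\partial B^3$, conjugating $\iota$ to the cone over the antipodal map. I expect the main obstacle to be the identification $W\approx P^2\times I$: this is where genuinely three-dimensional input enters (the classification of $S^2\times\mathbb{R}$-manifolds, equivalently the Poincaré conjecture applied to $\widetilde W$), whereas the remaining steps rely only on Smith theory and the standardness of finite cyclic actions on the $2$-sphere.
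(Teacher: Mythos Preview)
Your argument is correct and takes a genuinely different route from the paper's. The paper doubles $B^3$ to $S^3$, extends $\iota$ to an orientation-reversing involution $h$ on $S^3$, uses Smith theory to see that $\mathrm{Fix}(h)$ is a $\mathbb{Z}/2$-homology sphere of even codimension, rules out a fixed $2$-sphere (which would force $h$ to be a reflection by Hirsch--Smale, contradicting the antipodal behaviour on the equator), and then invokes the Hirsch--Smale--Livesay classification of involutions on $S^3$ with exactly two fixed points to conclude that $h$ is conjugate to $(x_1,x_2,x_3,x_4)\mapsto(x_1,-x_2,-x_3,-x_4)$. You instead stay inside $B^3$, use Smith theory there to pin down $\mathrm{Fix}(\iota)=\{c\}$, and reduce to identifying the free quotient $W=N/\iota$ with $P^2\times I$. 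Both approaches rest on the same circle of deep $3$-dimensional results (Livesay's theorem underlies both the Hirsch--Smale--Livesay classification and Tollefson's classification of compact $3$-manifolds covered by $S^2\times\mathbb{R}$), so neither is more elementary; the paper's version is shorter because it reduces everything to a single quotable classification, while yours makes the structure of the quotient $B^3/\iota\approx K(P^2)$ explicit along the way.

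One caveat: your ``direct'' alternative for $W\approx P^2\times I$ is not self-contained. You already know $\widetilde W=N\approx S^2\times I$ without invoking the Poincar\'e conjecture (it is a PL ball minus an interior PL ball), so the content of that step is precisely the assertion that a free orientation-reversing PL involution on $S^2\times I$ restricting to the antipodal map on each boundary sphere is standard --- which is the very thing to be proved and is essentially Livesay's theorem in disguise. So you should rely on your first option (the classification of compact $3$-manifolds covered by $S^2\times\mathbb{R}$, where $P^2\times I$ is the unique one with boundary $P^2\sqcup P^2$), and drop the circular ``or directly'' clause. With that adjustment the argument is complete; the final gluing step is fine once you use that $\mathrm{MCG}(P^2)$ is trivial to match the pieces along $\partial B_c$.
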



\begin{proof}
Let $S^3=B^3\cup_{S^2}B^3$ and define $h\colon S^3\to S^3$ as $h(x)=\iota(x)$. This is an involution of the 3-sphere to itself which reverses orientation. Thus, by Smith theory, the fixed point set $F$ of $h$ must be a homology $r$-sphere for $0\leq r\leq 3$ and, as the involution reverses orientation, $F$ must be a $2$-sphere or two points. We already know that $F$ cannot be a wildly embedded $2$-sphere, since $\iota$ is a PL involution. If $F$ were a tame $2$-sphere, then, by Hirsch--Smale \cite{hirsch-smale1959}, $h$ would be equivalent to a reflection through an equatorial $2$-sphere of $S^3$. However, this does not happen as this would fix the equatorial $2$-sphere where we actually know the involution is the antipodal map. Thus, the set of fixed points of $h$ must be two points and, by Hirsch--Smale--Livesay (see \cite[Theorem 1.1]{hirsch-smale1959} and \cite{livesay1963}), $h$ is equivalent to the involution $L\colon S^3\to S^3$ given by
\[L(x_1,x_2,x_3,x_4)=(x_1,-x_2,-x_3,-x_4).\]
Thus, the involution $h$ fixes the poles of $S^3$ and acts on the rest of the $3$-sphere $(-1,1)\times S^2$ as $h(t,x)=(t,-x)=i(t,x)$ where $-x$ is the antipodal map. 
\end{proof}


\begin{proposition}
\label{irreducible_cover}
If $X\in \cA$ is topologically singular, then its double branched cover $\widetilde{X}$ is irreducible if and only if $X$ is irreducible.
\end{proposition}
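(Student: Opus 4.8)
The plan is to prove both implications by pushing embedded $2$-spheres and two-sided projective planes back and forth along the branched covering $\mathsf{p}\colon\widetilde{X}\to X$, whose only nontrivial fibers lie over the finitely many topologically singular points of $X$ (the images of the isolated fixed points of $\iota$). Two facts will be used constantly. First, by Lemma~\ref{lem:involution.3.ball} the involution $\iota$ is, near each fixed point, the cone over the antipodal map on a small $2$-sphere; hence (i) the preimage of an embedded $K(P^2)\subset X$ whose boundary is a two-sided $P^2$ in the topologically regular part is a single $3$-ball on which $\iota$ acts as the cone over the antipodal map (this $3$-ball is a connected double branched cover of $K(P^2)$, i.e.\ a half of the double branched cover $S^3$ of $\Susp(P^2)$), and (ii) the preimage of a $3$-ball contained in the regular part of $X$ is a disjoint union of two $3$-balls. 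Second, a $3$-ball in $X$ contains no topologically singular point (as in Remark~\ref{rem:push.to.manifold.part}, a $3$-ball contains no $P^2$), so in particular a non-separating embedded $2$-sphere in $X$ bounds no $3$-ball.

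For ``$X$ irreducible $\Rightarrow\widetilde{X}$ irreducible'', suppose $\widetilde{X}$ is not irreducible, so $\pi_2(\widetilde{X})\neq 0$. By the equivariant sphere theorem for the (possibly non-free) $\mathbb{Z}/2$-action generated by $\iota$, there is an embedded essential $2$-sphere $\widetilde{S}\subset\widetilde{X}$ with either $\iota(\widetilde{S})\cap\widetilde{S}=\emptyset$ or $\iota(\widetilde{S})=\widetilde{S}$. (a) If $\iota(\widetilde{S})\cap\widetilde{S}=\emptyset$, then $\widetilde{S}$ contains no fixed point, so $S:=\mathsf{p}(\widetilde{S})$ is an embedded $2$-sphere in the regular part; by irreducibility of $X$ it bounds a $3$-ball $D$, which contains no singular point, so $\mathsf{p}^{-1}(D)$ is a disjoint union of two $3$-balls, one of which is bounded by $\widetilde{S}$ --- contradicting essentiality. (b) If $\iota(\widetilde{S})=\widetilde{S}$ and $\iota|_{\widetilde{S}}$ is free, then $\iota|_{\widetilde{S}}$ is conjugate to the antipodal map, and because $\iota$ reverses orientation on $\widetilde{X}$ while the antipodal map reverses orientation on $S^2$, the quotient $S:=\mathsf{p}(\widetilde{S})$ is a two-sided $P^2$ in the regular part; by irreducibility it bounds a $K(P^2)$, whose preimage is a single $3$-ball bounded by $\widetilde{S}$ --- again a contradiction. (c) If $\iota(\widetilde{S})=\widetilde{S}$ but $\iota|_{\widetilde{S}}$ has a fixed point, then by the local model $\iota|_{\widetilde{S}}$ is a rotation of $S^2$ with exactly two fixed points, and, since the antipodal map on $\mathbb{R}^3$ interchanges the two sides of every plane through the origin, $\iota$ interchanges the two local sides of $\widetilde{S}$; hence $\mathsf{p}(\widetilde{S})$ is a non-separating embedded $2$-sphere in $X$ and bounds no $3$-ball, contradicting irreducibility of $X$.

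For the converse ``$\widetilde{X}$ irreducible $\Rightarrow X$ irreducible'', first dispose of the case $\widetilde{X}\approx S^3$: then $\iota$ is an orientation-reversing PL involution of $S^3$ with isolated fixed points, so by Smith theory it has exactly two fixed points, and by the classification of such involutions (cf.\ the proof of Lemma~\ref{lem:involution.3.ball}) $X\approx\Susp(P^2)$, which is irreducible by Lemma~\ref{lem:irreducibility.characterization} since $M_X\approx P^2\times I$. Now assume $\widetilde{X}$ is irreducible and $\widetilde{X}\not\approx S^3$, and let $S\subset X$ be an embedded $2$-sphere or two-sided $P^2$; by Remark~\ref{rem:push.to.manifold.part} we may assume $S\subset M_X$. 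If $S\approx S^2$, then $\mathsf{p}^{-1}(S)=\widetilde{S}\sqcup\iota\widetilde{S}$ is a pair of disjoint spheres, $\widetilde{S}$ bounds a $3$-ball $\widetilde{D}$ by irreducibility of $\widetilde{X}$, and a short analysis of the position of $\iota\widetilde{S}$ relative to $\widetilde{S}$ --- using that $\widetilde{X}\not\approx S^3$ excludes the configuration in which both sides of a sphere are balls, and that $\iota$-equivariance excludes $\widetilde{D}$ and $\iota\widetilde{D}$ being nested --- forces $\widetilde{D}\cap\iota\widetilde{D}=\emptyset$; then $\mathsf{p}|_{\widetilde{D}}$ is injective and $\mathsf{p}(\widetilde{D})$ is a $3$-ball bounded by $S$. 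If $S\approx P^2$ is two-sided, then a tubular neighborhood of $S$ is non-orientable, so its preimage, being an orientable connected double cover, is $S^2\times(-1,1)$ and $\mathsf{p}^{-1}(S)$ is a single sphere $\widetilde{S}$; it bounds a $3$-ball $\widetilde{D}$, and since $\widetilde{X}\not\approx S^3$ the ball $\iota\widetilde{D}$, having the same boundary $\widetilde{S}$, must equal $\widetilde{D}$, so by Lemma~\ref{lem:involution.3.ball} $\iota|_{\widetilde{D}}$ is the cone over the antipodal map and $\mathsf{p}(\widetilde{D})\approx K(P^2)$ is bounded by $S$.

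The step I expect to be the crux is the invocation of the equivariant sphere theorem together with the treatment of its outputs: one must check $\pi_2(\widetilde{X})\neq 0$ when $\widetilde{X}$ fails to be irreducible, apply the equivariant sphere theorem for a non-free $\mathbb{Z}/2$-action, and --- the point with no manifold analogue --- recognize that an invariant equivariant sphere meeting the fixed-point set projects to a non-separating sphere in $X$, which is exactly what irreducibility of $X$ forbids. The remaining delicate bookkeeping is the case analysis in the converse on the mutual position of $\widetilde{D}$ and $\iota\widetilde{D}$, and the isolation of the case $\widetilde{X}\approx S^3$, which collapses to $X\approx\Susp(P^2)$.
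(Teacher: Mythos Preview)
Your proof is correct and follows the same overall strategy as the paper --- lift spheres and two-sided projective planes along $\mathsf{p}$ and analyze how $\iota$ acts on the balls they bound --- but the technical choices differ in two places. For ``$X$ irreducible $\Rightarrow\widetilde{X}$ irreducible'', the paper first pushes an essential $2$-sphere off the (isolated) fixed points by general position and then invokes Tollefson's lemma \cite{Tollefson73} to make it equivariant; this keeps the resulting sphere away from the fixed-point set, so only your cases~(a) and~(b) occur and the quotient is automatically a sphere in $M_X$ or a $P^2$ in $M_X$. You instead appeal to the full equivariant sphere theorem and then have to deal with case~(c), where the invariant sphere passes through fixed points; your analysis there is correct but is extra work that the paper's arrangement avoids. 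For the converse, the paper does not isolate $\widetilde{X}\approx S^3$ beforehand: in the $P^2$ case it argues directly that if $\iota$ swaps the two balls then the projective plane in $X$ would be one-sided, contradicting the hypothesis, which disposes of the $S^3$ configuration inside the main argument; and in the $S^2$ case it shows $\iota(B_a)=B_b$ by a short connected-components check rather than your ``not nested, not both complements balls'' trichotomy. Both routes reach the same conclusions; the paper's is slightly more economical and cites a more elementary source for the equivariant sphere, while yours is self-contained about the extra configuration~(c) at the cost of heavier machinery.
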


\begin{proof}
Suppose $\widetilde{X}$ is irreducible. If $S^2\subset X$ is a $2$-sphere in $X$, then $p^{-1}(S^2)=S^2_a\cup S^2_b$ are two disjoint spheres. Since $\widetilde{X}$ is irreducible, each  $2$-sphere $S^2_a$, $S^2_b$ bounds a $3$-ball $B_a$, $B_b$, respectively. Then, we can write
\begin{equation*}
    \widetilde{X}=B_a\cup B_b\cup \widetilde{X}',    
\end{equation*}
where $\widetilde{X}'=\widetilde{X}\setminus(B_a\cup B_b)$. Moreover, the involution $\iota\colon\widetilde{X}\to\widetilde{X}$ is such that $\iota(B_a)=B_b$. Indeed, $\iota$ restricts to a homeomorphism from $\widetilde{X}\setminus S^2_a$ to $\widetilde{X}\setminus S^2_b$. Now, $\mathring{B_a}$ is a connected component of $\widetilde{X}\setminus S^2_a$, so we must have either  $\iota(\mathring{B_a})=(\widetilde{X'}\cup B_a)\setminus S^2_b$ or $\iota(\mathring{B_a})=\mathring{B_b}$. In the first case, we would have, in particular, that $B_a\subseteq\iota(\mathring{B_a})\subseteq\iota(B_a)$. This implies that $\iota(B_a)\subseteq B_a$, meaning $B_a=\iota(B_a)$, which cannot happen since $S^2_a\cap \iota(S^2_a)=\emptyset$. Thus, we must have $\iota(\mathring{B_a})=\mathring{B_b}$.
Therefore,
\begin{equation*}
    X=\widetilde{X}'/\iota\cup B^3,
\end{equation*}
where $B^3=p(B_a)=p(B_b)$ and we have $\partial B^3=S^2$, our original $2$-sphere in $X$. Then, $S^2$ bounds a $3$-ball in $X$.

If $P^2\subset X$ is a two-sided projective plane in $X$ then $p^{-1}(P^2)=S^2$ is a $2$-sphere which must bound a $3$-ball $B^3$ in $\widetilde{X}$ since the latter is irreducible. We have the involution $\iota\colon\widetilde{X}\to\widetilde{X}$. Let us see that $
\iota(B^3)=B^3$. We have
\begin{equation*}
    \widetilde{X}=\widetilde{X}'\cup_{S^2} B^3,
\end{equation*}
where $\widetilde{X}'=\overline{\widetilde{X}\setminus B^3}$ and
\begin{equation*}
    \iota\colon \widetilde{X}'\cup_{S^2} B^3\to\widetilde{X}'\cup_{S^2} B^3.
\end{equation*}
We have two cases: $\widetilde{X}'=\iota(B^3)$ or $B^3=\iota(B^3)$. Suppose first that $\widetilde{X}'=\iota(B^3)$. Since $\iota$ is a homeomorphism, $\widetilde{X}'=\iota(B^3)$ is another $3$-ball and it follows that $\widetilde{X} = \widetilde{X}'\cup_{S^2} B^3$ is a $3$-sphere. Moreover, $\iota\colon \widetilde{X}\to \widetilde{X}$  exchanges the two $3$-balls and restricts to the antipodal map on the equatorial $S^2$ corresponding to their common boundary. 

Then the projective plane $P^2\subset X$ is one-sided, which is a contradiction, since we have assumed that $P^2$ is two-sided.
Therefore, $B^3=\iota(B^3)$. Hence, by Lemma~\ref{lem:involution.3.ball}, on $B^3$ the involution $\iota$ is the cone over the antipodal map on the boundary sphere of $B^3$ and $B^3/\iota = K(P^2)$, the cone over $P^2$. 
This shows that
\begin{align*}
    X=\widetilde{X}'/\iota\cup_{P^2}K(P^2).
\end{align*} 
Thus, $X$ is irreducible.

We now prove that the irreducibility of $X$ implies that of $\widetilde{X}$. Suppose then that $X$ is irreducible. If there is an essential $2$-sphere $S^2$ in $\tilde{X}$, we may assume that $S^2$ is disjoint from the fixed points of the involution $\iota\colon \tilde{X}\to \tilde{X}$. Then, by \cite[Lemma 1]{Tollefson73}, we may assume that $S^2$ is such that $\iota(S^2) = S^2$ or $S^2\cap \iota(S^2) = \emptyset$.
If $S^2\cap \iota(S^2)=\emptyset$, then $p(S^2)$ is a $2$-sphere in $X$ that bounds a ball $B^3\subset X$. Then a lift of $B^3$ is a ball in $\tilde{X}$ bounded by $S^2$.
If $\iota(S^2)=S^2$ then $p(S^2)$ is a projective plane in $M_X \subset X$. Since $X$ is irreducible, $p(S^2)$ bounds a $K(P^2 )$ in $X$. Then the lift of $K(P^2 )$ is a ball $B^3$ in $\tilde{X}$ bounded by $S^2$. In either case, we get a contradiction to $S^2$ being essential.
\end{proof}

We conclude this subsection with a sufficient condition for a prime topologically singular Alexandrov $3$-space to be irreducible. 


\begin{proposition}
Let $X\in \cA$ be topologically singular. If $X$ is prime and $\widetilde{X}$ has no $S^2\times S^1$ summands, then $X$ is irreducible.
\end{proposition}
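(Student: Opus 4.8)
The plan is to argue by contradiction and to transfer a non-separating projective plane in $X$ up to the orientable double branched cover $\widetilde{X}$, where it becomes a non-separating $2$-sphere. So suppose $X$ is prime but not irreducible. Since $X$ is topologically singular it is not a manifold, so Proposition~\ref{prop:prime.not.irreducible} applies; as its first two alternatives describe manifolds, $X$ must contain a non-separating projective plane, and the proof of that proposition produces such a plane which is two-sided. Denote it by $S$. By Remark~\ref{rem:push.to.manifold.part} we may assume $S\subset M_X$, so that $S$ is disjoint from the topologically singular points of $X$ and remains non-separating in $X$.

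The first step is to show that $\widetilde{S}:=\mathsf{p}^{-1}(S)$ is a $2$-sphere. Since $S$ is two-sided it has a product neighborhood $N\approx P^2\times(-1,1)$ contained in $M_X$, and $\mathsf{p}$ restricts to an unbranched double cover $\mathsf{p}^{-1}(N)\to N$. If this cover were trivial, $\mathsf{p}^{-1}(N)$ would contain a copy of the non-orientable manifold $P^2\times(-1,1)$ embedded as an open subset of the orientable manifold $\widetilde{X}$, which is impossible; hence the cover is connected, $\mathsf{p}^{-1}(N)$ is the universal cover of $N$, and $\widetilde{S}\approx S^2$. Being the full preimage of $S$, the sphere $\widetilde{S}$ is invariant under the involution $\iota$, and it is disjoint from the fixed-point set of $\iota$, since the latter maps onto the topologically singular points of $X$ and hence misses $S$.

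The second step is to show that $\widetilde{S}$ is non-separating in $\widetilde{X}$. Suppose not, and write $\widetilde{X}=\widetilde{A}\cup_{\widetilde{S}}\widetilde{B}$ for the closures of the two components of $\widetilde{X}\setminus\widetilde{S}$. Since $\iota(\widetilde{S})=\widetilde{S}$, the involution $\iota$ either interchanges $\widetilde{A}$ and $\widetilde{B}$ or preserves each of them. If it interchanges them, then every fixed point of $\iota$ would have to lie in $\widetilde{A}\cap\widetilde{B}=\widetilde{S}$; but $\iota$ has fixed points, because $X$ is topologically singular, and these avoid $\widetilde{S}$, a contradiction. If $\iota$ preserves $\widetilde{A}$ and $\widetilde{B}$, then $A:=\mathsf{p}(\widetilde{A})$ and $B:=\mathsf{p}(\widetilde{B})$ are compact subsets of $X$ with $A\cup B=X$, $A\cap B=\mathsf{p}(\widetilde{S})=S$, and $X\setminus S=(A\setminus S)\sqcup(B\setminus S)$ a disjoint union of two nonempty open sets, using that $\mathsf{p}$ is an open map and that $\iota$ preserves each side; thus $S$ separates $X$, again a contradiction. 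Hence $\widetilde{S}$ is a non-separating $2$-sphere in the orientable $3$-manifold $\widetilde{X}$, which forces $\widetilde{X}\approx N'\#(S^2\times S^1)$ for some closed orientable $3$-manifold $N'$. This contradicts the hypothesis that $\widetilde{X}$ has no $S^2\times S^1$ summand, so $X$ must be irreducible.

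The routine ingredients are the splitting of a non-separating $2$-sphere off an orientable $3$-manifold as an $S^2\times S^1$ summand and the point-set bookkeeping for the branched cover. The step demanding the most care is the dichotomy for the action of $\iota$ on the two sides of $\widetilde{S}$, together with the orientability argument showing $\widetilde{S}$ is connected; this is precisely where the hypotheses that $X$ is topologically singular and that $\widetilde{X}$ is orientable enter, and I do not anticipate any further obstacle.
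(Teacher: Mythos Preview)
Your proof is correct and follows essentially the same approach as the paper: both argue by contradiction, invoke Proposition~\ref{prop:prime.not.irreducible} to obtain a non-separating two-sided $P^2$ in $X$, lift it to a non-separating $2$-sphere in $\widetilde{X}$, and conclude that $\widetilde{X}$ has an $S^2\times S^1$ summand. The paper's proof is much terser, asserting without justification that the projective plane ``lifts to a non-separating sphere,'' whereas you carefully supply the orientability argument for connectedness of the preimage and the $\iota$-dichotomy for non-separation---these details are sound and fill in what the paper leaves implicit.
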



\begin{proof}
 If $X$ is not irreducible, then by Proposition~\ref{prop:prime.not.irreducible}, $X$ contains a non-separating projective plane $P^2$. Now, $P^2$ lifts to a non-separating sphere in ${\tilde X}$, which implies that ${\tilde X}$ has an $S^2{\times}S^1$ summand and hence is not prime. 
\end{proof}


\subsection{Proof of Theorem~\ref{thm:existence.prime.decomposition}: Existence of a prime decomposition.}
\label{ss:proof.existence.prime.decomposition}

Let us now show that every closed Alexandrov $3$-space $X$ has a prime decomposition. If $X$ is a $3$-manifold, then the statement is the usual prime decomposition theorem for $3$-manifolds (see \cite{kneser}, \cite[Theorem 1]{milnor}, \cite[Theorem 3.15]{hempel} and \cite[p.\ 8]{hatcher}).

Suppose now that $X$ is not a $3$-manifold. Recall that $M_X$ is homeomorphic to a compact non-orientable $3$-manifold with a finite even number of  $P^2$ boundary components. Then there is a prime decomposition of $M_X$ into prime $3$-manifolds with respect to to the usual connected sum (see   \cite[Remark 1 on p.\ 5]{milnor} or \cite[Section 5]{heil}). Hence, we may write
\begin{align}
\label{eq:prime.decomposition.M_x}
M_X=M_1\#\cdots\# M_{n},
\end{align}
where each $M_i$ is irreducible or an $S^2$-bundle over $S^1$. Note that the Loop Theorem  implies that any two-sided projective plane in $M_i$ is incompressible in $M_i$ (see \cite[Section 2]{stallings} or \cite[Theorem 4.2]{hempel}).

By Haken's finiteness theorem (see \cite[Proposition 1.7]{hatcher} and \cite{haken}), for every $M_i$ different from $S^2\times S^1$ or $S^2\tilde{\times}S^1$ there is a system $\cP_i=\{P^i_1,\ldots,P^i_{r_i}\}$ consisting of a finite number of projective planes  such that any other embedded two-sided projective plane in $M_i$ is parallel to one of the $P^i_j$ and no connected component of $M_i\setminus\cP_i$ is a product $P^2\times I$. This means that every embedded two-sided projective plane in $M_i$ is either parallel to one of the $P_j^i$ or it is boundary-parallel.

After capping off the $P^2$ boundary components of each $M_i$ to get a closed Alexandrov $3$-space $\widehat{M}_i$, we have, from \eqref{eq:prime.decomposition.M_x}, that
\[
    X=\widehat{M}_1\#\cdots\#\widehat{M}_{n}.
\]
For each $i\in\{1,...,n\}$, let $\cP'_i$ be the projective planes in $\cP_i$ that are separating. Suppose that $|\cP_i'|=s_i$. Then, we can cut and capp off $\widehat{M}_i$ through every projective plane in $\cP_i'$ and get that
\[
\widehat{M_i}=B_i^1\SCS\cdots\SCS B_i^{s_i},
\]
where $B^i_j$ is an Alexandrov $3$-space for each $j\in\{1,...,s_i\}$. Note that the $B_j^i$ do not need to be irreducible, as they might have two-sided non separating projective planes. However, each $B_j^i$ is prime as there are no separating spheres or projective planes which can realize a  connected sum decomposition. Therefore
\[
X=(B^1_1\widehat{\#}\cdots\widehat{\#} B_1^{s_1})\#\cdots\#(B_{n+1}^1\SCS\cdots\SCS B_{n+1}^{s_{n+1}}),
\]
which is a connected sum prime decomposition of $X$.
\qed


\subsection{Proof of Theorem~\ref{thm:normal.decomposition.uniqueness}: Uniqueness of a normal prime decomposition.} We will now show that every closed Alexandrov $3$-space admits a normal prime decomposition. Let $X\in \mathcal{A}$. If $X$ is a $3$-manifold, then the assertion follows from a generalization of Milnor's proof of the uniqueness of a prime decomposition for $3$-manifolds (see \cite[Proposition on p. 143]{heil}). Now, suppose $X$ has topologically singular points. By \cite[Proposition on p. 143]{heil}, $M_X$ has a unique prime decomposition as a $3$-manifold given by
\[M_X=M_1\#\cdots\# M_n\#l(S^1\tilde{\times}S^2).\]
Thus, it suffices to prove Theorem~\ref{thm:normal.decomposition.uniqueness} for any closed Alexandrov $3$-space $X$ such that $M_X$ is an irreducible $3$-manifold, i.e., every $2$-sphere in $M_X$ bounds a $3$-ball. 
Suppose then that $X$ has two prime decompositions   
$P_1 \hat{\#} \cdots \hat{\#} P_m$
and 
$Q_1 \hat{\#} \cdots \hat{\#} Q_n$
with $m\leq n$. There are systems $S=\{S_1 ,\dots, S_k\}$ and $T=\{T_1 ,\dots, T_l\}$ of separating, mutually non-parallel and non-boundary parallel projective planes in $M_X$ such that $M_X\backslash S=\{M_{P_1} ,\dots,M_{P_m}\}$ and $M_X \backslash T=\{M_{Q_1} ,\dots,M_{Q_n}\}$, where $P_i$ is obtained from $M_{P_i}$ by capping off the boundary components with cones over $P^2$. Similarly, we obtain $Q_j$ by capping off $M_{Q_j}$.  By Lemma ~\ref{lem:prime.iff.mx.prime.separating.p2.is.boundary.parallel.heil.3.2.3}, since $P_i$ is prime, every separating projective plane in $M_{P_i}$ is boundary parallel in $M_{P_i}$ and every $2$-sphere in $M_{P_i}$ bounds a ball in $M_{P_i}$. Similarly, since $Q_j$ is prime, every separating projective plane in $M_{Q_j}$ is boundary parallel in $M_{Q_j}$ and every $2$-sphere in $M_{Q_j}$ bounds a ball in $M_{Q_j}$.  

By  \cite[Lemma 1.2]{negami1981} there is an isotopy of $M_X$ that carries $S=\{S_1 ,\dots, S_k\}$ into a system disjoint from $T=\{T_1 ,\dots, T_l\}$. Thus, we may assume that $S\cap T=\emptyset$. Then each $T_j$ is in some $M_{P_{i_j}}$ and is boundary parallel in $M_{P_{i_j}}$. Since $T_j$ is not parallel to a boundary of $M_X$, there is a component $S_i\in S$ of $\partial M_{P_{i_j}}$ such that $T_j$ and $S_i$ bound a submanifold $E_{ij}$ of $M_{P_{i_j}}$ that is homeomorphic to $P^2{\times}I$. 
No other $T_r$ lies in $E_{ij}$. Otherwise, $T_r$ would be parallel to $T_j$ (see, for example, \cite[ Lemma 1.1]{negami1981}). Hence, we may deform the system $T=\{T_1 ,\dots, T_l\}$ to the system
$S=\{S_1 ,\dots, S_k\}$. It follows that $m=n$ and $\{P_1 ,\dots,P_m\}$ is a permutation of $\{Q_1 ,\dots,Q_n\}$.
\qed


\section{An infinite family of prime Alexandrov \texorpdfstring{$3$}{}-spaces which are not irreducible.}
\label{s:infinitely.many.prime.non.irreducible.spaces}

In this section, we construct an infinite family of closed topologically singular Alexandrov $3$-spaces which are prime and not irreducible, proving Theorem~\ref{thm:infinite.family.prime.not.irreducible}. The existence of such a family stands in contrast to the manifold case, where every prime closed $3$-manifold is irreducible, except for $S^2\times S^1$ and $S^2{\tilde{\times}}S^1$.

Let  $F_g$ be a closed, connected, orientable surface of genus $g\geq 1$ and let $M=F_g\times S^1$. Note that the universal cover of $M$ is  $\mathbb{R}^3$, which is irreducible. Therefore, $M$ is also irreducible by \cite[Proposition 1.6]{hatcher}.

Let $\alpha$ be an orientation-reversing involution of $M$ with only isolated fixed points and let $X=M/\alpha$. Note that such an involution always exists. Indeed, since $g\geq 1$, by \cite[p.\ 49]{farb_margalit}, there is a hyperelliptic involution $\varphi$ on $F_g$ (i.e.\ an involution whose quotient space is $S^2)$. Taking the product of $\varphi$ with the conjugation $z\mapsto \bar{z}$ in $S^1\subset \mathbb{C}$ yields an $\alpha$ on $M$ with only isolated fixed points  (see, for example, \cite[p.\ 5571]{galaz-guijarro2015}). Note that $\alpha$ has at least four isolated fixed points. Thus, $X$ has at least four topologically singular points. Since $M$ is irreducible, Lemma~\ref{irreducible_cover} implies that $X$ is irreducible.
We now cut off the cones over the projective planes corresponding to sufficiently small open neighborhoods of two singular points of $M$ to get a topological space $X_0$ with two $P^2$ boundary components. After identifying the two $P^2$ boundary components of $X_0$, we obtain a closed topologically singular Alexandrov $3$-space, which we will denote by $X_g$.


\begin{theorem}
The Alexandrov $3$-space $X_g$ is prime and is not irreducible.
\end{theorem}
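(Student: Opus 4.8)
The plan is to reduce both assertions to the manifold-part criteria proved above. Recall that $X_g$ is obtained from $X$ by deleting the interiors of cone neighborhoods of two topologically singular points, yielding a space $X_0$ with two $P^2$ boundary components $P_1$, $P_2$, and then identifying $P_1$ with $P_2$ along a homeomorphism $\varphi$. Hence the manifold part $M_{X_g}$ is precisely $M_X$ with the boundary projective planes $P_1$ and $P_2$ glued along $\varphi$; write $\Sigma\subset M_{X_g}\subset X_g$ for their common image. First I would record that a regular neighborhood of $\Sigma$ in $M_{X_g}$ is a product $P^2\times(-1,1)$, so $\Sigma$ is two-sided, and that cutting $M_{X_g}$ along $\Sigma$ (equivalently, cutting $X_g$ along $\Sigma$) recovers the connected manifold $M_X$ (equivalently the connected space $X_0$), so $\Sigma$ is non-separating. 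Since $X=(F_g\times S^1)/\alpha$ with $\alpha$ having at least four fixed points, $X$ has at least four topologically singular points, so $X_g$ has at least two and is topologically singular.

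The non-irreducibility is then immediate: $\Sigma$ is a two-sided projective plane which, being non-separating, cannot bound a cone $K(P^2)$ in $X_g$, so, as $X_g$ is topologically singular, Definition~\ref{def:irreducibility} shows $X_g$ is not irreducible. (Equivalently, this is forced by Proposition~\ref{prop:prime.not.irreducible} once primeness is known.)

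For primeness, by Remark~\ref{prime_manifold} and Remark~\ref{rem:push.to.manifold.part} it suffices to prove that every separating $2$-sphere in $M_{X_g}$ bounds a $3$-ball and that every separating two-sided projective plane in $M_{X_g}$ is boundary-parallel in $M_{X_g}$. Since $X$ is irreducible, Lemma~\ref{lem:irreducibility.characterization} gives that $M_X$ is irreducible and that every two-sided projective plane in $M_X$ is boundary-parallel. The key technical input I would establish is that every two-sided projective plane $F$ in $M_{X_g}$ is $\pi_1$-injective: otherwise, by the loop theorem the nontrivial element of $\ker(\pi_1 F\to\pi_1 M_{X_g})$ — necessarily the generator of $\pi_1 F\cong\mathbb{Z}/2$, represented by a one-sided simple closed curve — would bound an embedded disk $D$ with interior disjoint from $F$; but then a regular neighborhood of $F\cup D$ would be an orientable $3$-manifold (it is $\mathbb{Z}/2$-acyclic in degree one, being homotopy equivalent to $S^2$), while it contains the two-sided non-orientable surface $F$, which is impossible. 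Granting this, I would isotope a given separating surface $F$ (a $2$-sphere or a two-sided $P^2$) to minimize $|F\cap\Sigma|$ and show it can be made disjoint from $\Sigma$: each component of $F\cap\Sigma$ is a simple closed curve lying on both $\Sigma\approx P^2$ and $F$, and $\pi_1$-injectivity of $\Sigma$ (and of $F$, when $F$ is a $P^2$) forces it to be either trivial on both surfaces or essential on both; trivial circles are eliminated by the standard surgery along an innermost disk in $\Sigma$, using irreducibility of $M_X$ to carry the relevant bounding property over to one of the resulting pieces; and since $P^2$ contains at most one essential simple closed curve up to disjointness, one is left with at most one essential intersection circle $c$ — which is impossible, because $c$ is orientation-reversing in $M_{X_g}$ (as $\Sigma$ is two-sided and non-orientable, so $w_1(M_{X_g})$ restricts to $w_1(\Sigma)\neq 0$) and hence nonzero in $H_1(M_{X_g};\mathbb{Z}/2)$, whereas the intersection of $\Sigma$ with the separating surface $F$ bounds a subsurface of $\Sigma$ and is therefore null-homologous. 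Once $F\cap\Sigma=\emptyset$ we have $F\subset M_X$: a sphere bounds a ball there by irreducibility; a two-sided $P^2$ is boundary-parallel in $M_X$, say to a component $B$ of $\partial M_X$, and $B$ cannot be $P_1$ or $P_2$ (else $F$ would be parallel to $\Sigma$, hence non-separating), so $B$ survives in $\partial M_{X_g}$ and $F$ is boundary-parallel there. This yields both required properties and hence, by the criterion, primeness of $X_g$.

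The main obstacle is this disjointification step: the $\pi_1$-injectivity of two-sided projective planes in $M_{X_g}$ together with the bookkeeping needed to run the innermost-disk surgeries while tracking the sidedness of the intersection curves and which boundary component one ends up parallel to. The individual ingredients — orientability of the neighborhood $N(F\cup D)$, the observation that a $P^2$ carries at most one essential simple closed curve, and the $\mathbb{Z}/2$-homology parity for separating surfaces — are standard, so I expect the write-up to be routine but somewhat lengthy.
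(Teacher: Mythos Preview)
Your argument is correct and follows the same route as the paper: the gluing projective plane $\Sigma$ is two-sided and non-separating (giving non-irreducibility), and primeness is verified by pushing a separating sphere or $P^2$ off $\Sigma$ via an innermost-disk argument using irreducibility of $M_X$, then checking in $M_X$ that it bounds a ball or is parallel to a boundary component other than the glued ones. One remark: your $\pi_1$-injectivity detour is correct but unnecessary---an intersection curve of two two-sided surfaces is two-sided on one iff on the other (compare normal bundles), which already yields ``trivial on both or essential on both''; your homology argument ruling out the essential case is precisely what justifies the paper's bare assertion that the intersection curves are two-sided.
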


\begin{proof}

Note that $X_g$ is not irreducible, as the projective plane that results from identifying the boundary components in $X_0$ is two-sided and non-separating.

Let $M_X$ and $M_{X_g}$ be, respectively, the non-orientable $3$-manifolds with boundary obtained from $X$ and $X_g$ by removing sufficiently small open neighborhoods of the topologically singular points.

Since $X$ is irreducible, Lemma~\ref{lem:irreducibility.characterization} implies that $M_X$ is irreducible and every two-sided projective plane in $M_X$ is boundary-parallel. In particular, every separating projective plane in $M_X$ is boundary-parallel. Now, $M_{X_g}$ is irreducible, since it is obtained from $M_X$ by identifying two incompressible surfaces $P_0$ and $P_1$ in $\partial M_X$.
Hence, $M_{X_g}$ is prime. Thus, by Lemma~\ref{lem:prime.iff.mx.prime.separating.p2.is.boundary.parallel.heil.3.2.3}, it suffices to show that every separating projective plane in $M_{X_g}$ is boundary-parallel.

Let $P$ be a separating projective plane in $M_{X_g}=M_X \cup P^2{\times}[0,1]$, where $P_0=P^2{\times}\{0\}$ and $P_1=P^2{\times}\{1\}$. Suppose first that $P\cap (P_0 \cup P_1 )=\emptyset$. Then $P$ deforms into $M_X$ and is boundary-parallel in $M_X$. Since $P$ is separating, it is not parallel to $P_0$ nor $P_1$ and is therefore boundary-parallel in $M_{X_g}$.
Suppose now that $P\cap (P_0 \cup P_1 )\neq\emptyset$. We will show that $P$ is isotopic to a projective plane that misses $P_0 \cup P_1$. Deform $P$ so that $P\cap (P_0 \cup P_1 )$ consists of a minimal collection of simple closed curves. These curves are two-sided in $P$, $P_0$, $P_1$ and therefore bound disks in each. 
Let $c$ be an innermost intersection curve in $P$, i.e. $c$ bounds a disk $D$ in $P$ such that $D\cap (P_0 \cup P_1 )=c$. Let $D'$ be the disks bounded by $c$ in $P_0$, say. Then $D\cup D'$ (slightly deformed) is a $2$-sphere in $M_X$ that bounds a ball in $M_X$. Then $D$ can be deformed in this ball to $D'$ and then slightly off $D'$ to eliminate $c$. This gives a deformation of $P$ with fewer intersection curves with $P_0 \cup P_1 $. By minimality, $P$ misses $P_0 \cup P_1 $. Thus, by the previous case, $P$ must be boundary-parallel.
\end{proof}


\section{Proof of Mitsuishi's and Yamaguchi's conjecture}
\label{s:mitsuishi.yamaguchi}

In this section, we prove Theorem~\ref{thm:mitsuishi.yamaguchi}, which verifies  Mitsuishi's and Yamaguchi's gluing conjecture \cite[Conjecture 1.10]{mitsuishi-yamaguchi2015}. To prove this theorem, we will require the following lemmas.


\begin{lemma}\label{double_bpt}
The double of $B(\mathrm{pt})$ is homeomorphic to $\mathrm{Susp}(P^2)\#\mathrm{Susp}(P^2)$.
\end{lemma}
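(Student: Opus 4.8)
The plan is to deduce the lemma from a general principle: for compact spaces $A$ and $B$ with boundary (with all topological singularities away from the gluing region), the boundary connected sum and the interior connected sum are interchanged by doubling, i.e.
\[
D(A\,\#_{b}\,B)\;\approx\;D(A)\,\#\,D(B),
\]
where $D(W):=W\cup_{\partial W}W$ denotes the double, the boundary connected sum $A\,\#_{b}\,B$ is taken along $2$-disks in the regular parts of $\partial A$ and $\partial B$, and the connected sum on the right is taken along balls centered at regular points. Granting this, the lemma follows at once: by Section~\ref{spaces}, $B(\mathrm{pt})\approx K(P^{2})\,\#_{b}\,K(P^{2})$, and by definition $\mathrm{Susp}(P^{2})=K(P^{2})\cup_{P^{2}}K(P^{2})=D(K(P^{2}))$, so taking $A=B=K(P^{2})$ gives
\[
D(B(\mathrm{pt}))\;\approx\;D(K(P^{2}))\,\#\,D(K(P^{2}))\;\approx\;\mathrm{Susp}(P^{2})\,\#\,\mathrm{Susp}(P^{2}).
\]

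To prove the general principle, write $W=A\,\#_{b}\,B=A\cup_{\Delta}B$, where $\Delta\cong D^{2}$ is the gluing disk; note $\mathring{\Delta}\subset\interior(W)$ while $\partial\Delta\subset\partial W$, and $\partial W=(\partial A\setminus\mathring{\Delta})\cup_{\partial\Delta}(\partial B\setminus\mathring{\Delta})$. In the double $D(W)=W\cup_{\partial W}W^{*}$ the two copies of $\Delta$ (one in $W$, one in $W^{*}$) share their common boundary circle, so together they form an embedded $2$-sphere $S\subset D(W)$ lying in the regular part. Since $W$ and $W^{*}$ are glued only along $\partial W$, and the portion $\partial A\setminus\mathring{\Delta}$ of $\partial W$ lies in $\partial A$ while $\partial B\setminus\mathring{\Delta}$ lies in $\partial B$, cutting $D(W)$ along $S$ produces exactly the two pieces
\[
U_{A}:=A\cup_{\partial A\setminus\mathring{\Delta}}A^{*}
\qquad\text{and}\qquad
U_{B}:=B\cup_{\partial B\setminus\mathring{\Delta}}B^{*},
\]
reglued along $S$. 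It therefore suffices to show $U_{A}\approx D(A)\setminus\mathring{B^{3}}$ for a ball $B^{3}$ around regular points (and similarly for $U_{B}$); granting this, $D(W)\approx(D(A)\setminus\mathring{B^{3}})\cup_{S^{2}}(D(B)\setminus\mathring{B^{3}})=D(A)\,\#\,D(B)$.

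For the remaining claim, observe that inside $D(A)=A\cup_{\partial A}A^{*}$ the disk $\Delta\subset\partial A$ has a regular neighborhood $N(\Delta)\cong\Delta\times[-1,1]\cong B^{3}$ meeting $A$ in a collar $\Delta\times[0,1]$ of $\Delta$ pushed into $A$ and meeting $A^{*}$ in a collar $\Delta\times[-1,0]$ pushed into $A^{*}$; this uses only that $\Delta$ lies in the regular (manifold) part of $\partial A$, where $A$ is collared. Deleting $\mathring{N(\Delta)}$ from $A$ (resp.\ $A^{*}$) merely pushes the boundary disk $\Delta$ inward, which is a self-homeomorphism of $A$ (resp.\ $A^{*}$), after which the two copies remain glued precisely along $\partial A\setminus\mathring{\Delta}$. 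Hence $D(A)\setminus\mathring{N(\Delta)}\approx U_{A}$, as needed.

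The step I expect to be the main obstacle is exactly this last identification $U_{A}\approx D(A)\setminus\mathring{B^{3}}$: one must keep careful track of the collars and of which faces of $N(\Delta)$ get identified with which, and verify that all the regular neighborhoods and collarings take place in the manifold part of the spaces, away from the cone points of $K(P^{2})$, so that the standard PL collar and regular-neighborhood arguments apply verbatim. Everything else is formal bookkeeping.
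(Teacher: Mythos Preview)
Your argument is correct and proceeds along a genuinely different route from the paper. You isolate and prove the general principle $D(A\,\#_{b}\,B)\approx D(A)\,\#\,D(B)$ and then specialize to $A=B=K(P^{2})$, invoking only the description of $B(\mathrm{pt})$ as a boundary connected sum from Section~\ref{spaces}. The paper instead works in the reverse direction and concretely: it writes $\mathrm{Susp}(P^{2})\setminus\mathring{D}^{3}$ as $K(P^{2})\cup_{\mathrm{M\ddot{o}}}K(P^{2})$, observes that the two M\"obius bands in $\mathrm{Susp}(P^{2})\,\#\,\mathrm{Susp}(P^{2})$ assemble to a two-sided Klein bottle, and then cuts along that Klein bottle, using $P^{2}=\mathrm{M\ddot{o}}\cup_{S^{1}}D^{2}$ to identify each half with $K(P^{2})\cup_{D^{2}}K(P^{2})=B(\mathrm{pt})$. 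Your approach is cleaner and reusable (the same lemma immediately gives, e.g., Lemma~\ref{double_bs2} with $A=K(P^{2})$, $B=D^{3}$), while the paper's argument has the virtue of making the separating Klein bottle in $\mathrm{Susp}(P^{2})\,\#\,\mathrm{Susp}(P^{2})$ explicit, which is geometrically informative. Your self-identified ``main obstacle'' is indeed the only delicate point, and your collar argument handles it; since the gluing disk $\Delta$ sits in $\partial K(P^{2})\approx P^{2}$ away from the cone point, all the PL collar and regular-neighborhood statements you invoke are legitimate.
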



\begin{proof}
Let $D^3\subset M_{\mathrm{Susp}(P^2)}$ be a $3$-ball in the manifold part of $\Susp(P^2)$. Then, by \cite[Remark 2.62]{mitsuishi-yamaguchi2015},
\begin{equation*}
    K(P^2)\cup_{\mathrm{M\Ddot{o}}}K(P^2)=\overline{\mathrm{Susp}(P^2)\setminus D^3}\cong S^2\times[-1,1]/_{(\sigma,-\mathrm{id})},
\end{equation*}
where $\sigma$ is topologically conjugate to the suspension of the antipodal map on $S^1$. Then
\[
\mathrm{Susp}(P^2)\#\mathrm{Susp}(P^2)=\left( K(P^2)\cup_{\mathrm{M\Ddot{o}}}K(P^2)\right)\cup_{S^2}\left( K(P^2)\cup_{\mathrm{M\Ddot{o}}}K(P^2)\right).
\]
In each $K(P^2)\cup_{\mathrm{M\Ddot{o}}}K(P^2)$ there is a two-sided separating Möbius band, i.e. a Möbius band whose normal neighborhood is homeomorphic to $\mathrm{M\Ddot{o}}\times[-1,1]$, its boundary is the sphere $S^2$ along which we are gluing. Then, when gluing along $S^2$, we glue $\mathrm{M\Ddot{o}}\times[-1,1]\cup_{S^2}\mathrm{M\Ddot{o}}\times[-1,1]$, which is homeomorphic to $\mathrm{Kl}\times[-1,1]$. This means that there is a two-sided separating Klein bottle in $\mathrm{Susp}(P^2)\#\mathrm{Susp}(P^2)$. Therefore, because we know that $P^2=\mathrm{M\Ddot{o}}\cup_{S^1} D^2$, when splitting along this Klein bottle, we get two copies of 
\[
    K(P^2)\cup_{D^2}K(P^2)=B(\mathrm{pt}),
\]
where the last equality follows from the work of Mitsuishu and Yamaguchi (see lines before \cite[Lemma 2.61]{mitsuishi-yamaguchi2015}). Therefore, $\mathrm{Susp}(P^2)\#\mathrm{Susp}(P^2)$ is the double of $B(\mathrm{pt})$, as we wanted.
\end{proof}


\begin{lemma}\label{double_bs2}
The double of $B(S_2)$ is homeomorphic to $\mathrm{Susp}(P^2)\#\mathrm{Susp}(P^2)$.
\end{lemma}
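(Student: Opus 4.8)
The strategy is to reduce the statement to the description of $B(S_2)$ recorded in Section~\ref{s:preliminaries}: namely $B(S_2)\approx \mathrm{Susp}(P^2)\setminus\mathrm{int}(D^3)$, where $D^3\subset \mathrm{Susp}(P^2)$ is a closed $3$-ball consisting of topologically regular points, with $\partial B(S_2)\approx\partial D^3\approx S^2$. By definition, the double of $B(S_2)$ is obtained by gluing two copies of $B(S_2)$ along their $S^2$ boundaries via the identity. Applying the homeomorphism above to each copy, the double of $B(S_2)$ is thus homeomorphic to the space obtained from two copies of $\mathrm{Susp}(P^2)$, each with an open regular $3$-ball removed, by identifying the resulting boundary $2$-spheres. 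This is precisely the form of a connected sum $\mathrm{Susp}(P^2)\#\mathrm{Susp}(P^2)$ taken along the topologically regular centers of the removed balls; the only point requiring care is that the identity gluing coming from the double may be used in place of the gluing homeomorphism used to define the connected sum.

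To handle this, I would invoke the classical fact that, for connected sums of $3$-manifolds along regular points, the result is independent of the choice of identifying homeomorphism of the $2$-spheres provided at least one summand is non-orientable (see, e.g., \cite[Ch.~3]{hempel}). The space $\mathrm{Susp}(P^2)$ is non-orientable: the complement of its two cone points is homeomorphic to $P^2\times(0,1)$, which carries orientation-reversing loops contained in its manifold part. Sliding the removed ball $D^3$ around such a loop produces an ambient isotopy of $\mathrm{Susp}(P^2)$ whose time-one map carries $D^3$ to itself and restricts to an orientation-reversing homeomorphism of $\partial D^3\approx S^2$; this restricts in turn to a self-homeomorphism of $B(S_2)=\mathrm{Susp}(P^2)\setminus\mathrm{int}(D^3)$ inducing an orientation-reversing map on $\partial B(S_2)$. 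Since any self-homeomorphism of $S^2$ is isotopic to the identity or to a fixed reflection, composing with this self-homeomorphism absorbs the difference between the identity gluing defining the double and the gluing defining $\mathrm{Susp}(P^2)\#\mathrm{Susp}(P^2)$. Hence the double of $B(S_2)$ is homeomorphic to $\mathrm{Susp}(P^2)\#\mathrm{Susp}(P^2)$.

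I expect the only delicate step to be this last comparison of gluing homeomorphisms; since all the $2$-spheres involved and the ball-sliding isotopy take place inside the topologically regular part of $\mathrm{Susp}(P^2)$, the standard $3$-manifold argument transfers without change to the Alexandrov setting. As a consistency check, this identification also agrees with Lemma~\ref{double_bpt}: both $B(\mathrm{pt})$ and $B(S_2)$ have double homeomorphic to $\mathrm{Susp}(P^2)\#\mathrm{Susp}(P^2)$, even though $B(\mathrm{pt})$ and $B(S_2)$ are not themselves homeomorphic, their boundaries being a Klein bottle and a $2$-sphere, respectively.
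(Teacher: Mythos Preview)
Your proof is correct and follows essentially the same approach as the paper's: both use the identification $B(S_2)\approx \mathrm{Susp}(P^2)\setminus\mathrm{int}(D^3)$ from Section~\ref{s:preliminaries} to recognize the double as the connected sum $\mathrm{Susp}(P^2)\#\mathrm{Susp}(P^2)$. The paper's proof is terser and does not explicitly address the gluing-homeomorphism subtlety you carefully handle via non-orientability, but the core argument is identical.
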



\begin{proof}
We know from Definition \ref{spaces} that $B(S_2)$ is homeomorphic to $\mathrm{Susp}(P^2)\setminus\mathrm{int}(D^3)$, which has boundary $\s^2$. Therefore, the double of $B(S_2)$ is homeomorphic to
\[\left(\mathrm{Susp}(P^2)\setminus\mathrm{int}(D^3)\right)\cup_{\s^2}\left(\mathrm{Susp}(P^2)\setminus\mathrm{int}(D^3)\right)\]
which is $\mathrm{Susp}(P^2)\#\mathrm{Susp}(P^2)$.
\end{proof}


\begin{remark}\label{quadripus_BS4}
    The space $B(S_4)$ is the \textit{quadripus} capped with cones over the projective plane. 
     The quadripus, as defined in \cite[Example 2]{heil-larranaga}, is the punctured quotient of $T^2\times [0,1]$ via the involution $\tau(z_1,z_2,t)=(\overline{z_1},\overline{z_2},{\color{blue}1-t})$. It is easy to see that $\tau$ is topologically conjugate to the involution $f=(\sigma,-id)\colon T^2\times[-1,1]\to[-1,1]$ given by $f(z_1,z_2,t)=(\overline{z_1},\overline{z_2},-t)$, and we have used the latter involution to define the quadripus in Section~\ref{ss:special.manifolds}. Thus, we have that $T^2\times [0,1]/\tau\approx T^2\times[-1,1]/(\sigma,-id)=B(S_4)$ (see Definition~\ref{spaces}). Moreover, if $C_i$ is an invariant $3$-ball neighborhood of a fixed point of $\tau$, then $C_i/\tau$ is a cone over the projective plane. Thus, $T^2\times [0,1]/\tau \approx B(S_4)$ is just the \textit{quadripus} capped with cones over the projective plane.
\end{remark}


\begin{remark}\label{octopod_quadripus}
The \textit{octopod} is homeomorphic to $Q\cup_{T^2}Q$ (see \cite[Example 4, Section 3]{heil-larranaga}).
\end{remark}


\begin{lemma}\label{double_bs4}
The double of $B(S_4)$ is the quotient of the $3$-torus $T^3$ via the involution $\beta\colon T^3\to T^3$ given by $\beta(z_1,z_2,z_3)=(\overline{z_1},\overline{z_2},\overline{z_3})$. 
\end{lemma}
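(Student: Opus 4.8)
The plan is to build the double of $B(S_4)$ directly from the definition $B(S_4) = T^2\times[-1,1]/(\sigma,-\mathrm{id})$ and match it against $T^3/\beta$, using the quadripus description of Remark~\ref{quadripus_BS4} and the octopod description of Remark~\ref{octopod_quadripus} as the bridge. First I would recall from Remark~\ref{quadripus_BS4} that $B(S_4)$ is the quadripus $Q$ capped off with cones over $P^2$ at its four singular points, and that the boundary of $B(S_4)$ is the incompressible torus $T^2$ (the quadripus $Q$ has boundary four copies of $P^2$ together with one torus, and capping the $P^2$'s leaves only the torus). Hence the double of $B(S_4)$ along its boundary torus is the union of two copies of the capped quadripus glued along $T^2$, which equals $\bigl(Q\cup_{T^2}Q\bigr)$ with its eight $P^2$ boundary components capped by cones over $P^2$.

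Next I would invoke Remark~\ref{octopod_quadripus}, which states that the octopod $O$ is homeomorphic to $Q\cup_{T^2}Q$. Therefore the double of $B(S_4)$ is the octopod with its eight $P^2$ boundary components capped off by cones over $P^2$. Now the octopod was defined in Section~\ref{ss:special.manifolds} as the orbit manifold $M_\ast = \overline{T^3\setminus(C_1\cup\cdots\cup C_8)}/\tau$, where $\tau(z_1,z_2,z_3)=(\overline{z_1},\overline{z_2},\overline{z_3})$ has eight isolated fixed points and the $C_i$ are invariant $3$-ball neighborhoods of these fixed points. As in Remark~\ref{quadripus_BS4}, each $C_i/\tau$ is a cone over $P^2$ (the differential of $\tau$ at a fixed point acts as $-\mathrm{id}$ on the tangent space, i.e.\ as the antipodal map on the unit sphere, so $C_i/\tau \approx K(P^2)$). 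Consequently, capping the eight $P^2$ boundary components of the octopod with cones over $P^2$ exactly reglues the pieces $C_i/\tau$, yielding $T^3/\tau$. Since $\tau$ and $\beta$ are literally the same involution, this gives that the double of $B(S_4)$ is homeomorphic to $T^3/\beta$.

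The main obstacle, and the one step needing a little care, is the bookkeeping on boundary components and singular points: one must verify that the gluing locus in the double is precisely the torus boundary of $B(S_4)$ (not any of the capped-off regions), that $Q\cup_{T^2}Q$ has exactly eight $P^2$ boundary components (four from each copy of $Q$), and that these correspond bijectively to the eight fixed points of $\beta$ on $T^3$ under the identification $O\approx Q\cup_{T^2}Q$ of Remark~\ref{octopod_quadripus}. Once this correspondence is pinned down, capping off is the inverse operation to removing the invariant balls $C_i$, so the identification $\mathrm{double}(B(S_4))\approx T^3/\beta$ follows formally. I would also note in passing that $T^3/\beta$ is the \emph{capped octopod} mentioned after Theorem~\ref{thm:mitsuishi.yamaguchi}, which is consistent with the list of four homeomorphism types of gluings appearing there.
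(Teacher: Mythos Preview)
Your proposal is correct and follows essentially the same route as the paper's proof: identify $B(S_4)$ with the capped quadripus via Remark~\ref{quadripus_BS4}, so that the double along $T^2$ is the capped $Q\cup_{T^2}Q$, then use Remark~\ref{octopod_quadripus} to recognize this as the capped octopod, which by the definition in Section~\ref{ss:special.manifolds} is $T^3/\beta$. The paper's version is terser and leaves the bookkeeping you highlight implicit, but the argument is the same.
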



\begin{proof}
The double of $B(S_4)$ is $B(S_4)\cup_{T^2}B(S_4)$ which, by Remark \ref{quadripus_BS4}, is the same as capping off $Q\cup_{\t^2}Q$ with cones over the projective plane. This is the same as capping of the \textit{octopod}, which results in the quotient of $T^3$ via the involution $\beta$ (see Definition \ref{ss:special.manifolds}).
\end{proof}


\begin{lemma}\label{bipod_tetrapod}
The \textit{bipod} and \textit{tetrapod} capped with cones over the projective plane are not simply-connected.
\end{lemma}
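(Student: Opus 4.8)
The plan is to realize the capped bipod and the capped tetrapod as flat Alexandrov $3$-spaces of the form $M/\tau$, reduce the computation of their fundamental groups — via van Kampen's theorem — to a question about a $3$-dimensional crystallographic group, and then exhibit an explicit nontrivial quotient. Write $\widehat{B}$ (resp.\ $\widehat{\TP}$) for the bipod $B$ (resp.\ the tetrapod $\TP$) of Section~\ref{ss:special.manifolds} with its projective-plane boundary components capped off by cones over $P^2$. First I would identify these spaces concretely. By construction $B$ is the orbit manifold $M_\ast=\overline{M\setminus(C_1\cup C_2)}/\tau$, where $M$ is the Hantzsche--Wendt manifold and $\tau$ is an orientation-reversing involution with two isolated fixed points (for $\TP$ one takes instead $M=T^2\times[-1,1]/\{(z_1,z_2,1)\sim(\overline z_1,\overline z_2,-1)\}$ and $\tau$ with four isolated fixed points). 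Each $C_i$ is a $\tau$-invariant $3$-ball about a fixed point, so $\tau|_{\partial C_i}$ is a free PL involution of $S^2$, hence conjugate to the antipodal map; by Lemma~\ref{lem:involution.3.ball} the restriction $\tau|_{C_i}$ is then the cone over the antipodal map and $C_i/\tau\approx K(P^2)$. Therefore $M/\tau$ is exactly $M_\ast$ with its $P^2$ boundary components capped by cones over $P^2$; that is, $\widehat{B}\approx\mathrm{HW}/\tau$ and $\widehat{\TP}\approx M/\tau$, and by Proposition~\ref{prop:double_branched_cover} these are closed topologically singular Alexandrov $3$-spaces with orientable double branched cover $M$.

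Next I would set up the crystallographic picture. Both choices of $M$ are closed orientable \emph{flat} $3$-manifolds, say $M=\mathbb{R}^3/\Gamma$ for a Bieberbach group $\Gamma$ whose holonomy $\mathrm{hol}(\Gamma)$ is $(\mathbb{Z}/2)^2$ in the Hantzsche--Wendt case and $\mathbb{Z}/2$ (generated by $-\mathrm{id}$ on the $T^2$ fibre) in the tetrapod case; in either case $-I\notin\mathrm{hol}(\Gamma)$. The involution $\tau$ lifts to an isometry $\tilde\tau\in\mathrm{Isom}(\mathbb{R}^3)$ normalizing $\Gamma$. Since $\tau$ has a fixed point, so does $\tilde\tau$, and then $\tilde\tau^2\in\Gamma$ is a translation fixing that point, forcing $\tilde\tau^2=\mathrm{id}$; so $\tilde\tau$ has order $2$. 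Being orientation-reversing with an isolated fixed point, its linear part $\ell(\tilde\tau)\in O(3)$ satisfies $\ell(\tilde\tau)^2=I$, $\det\ell(\tilde\tau)=-1$, and has no eigenvalue $+1$; hence $\ell(\tilde\tau)=-I$. Thus $\Lambda:=\langle\Gamma,\tilde\tau\rangle$ is a crystallographic group with $\mathbb{R}^3/\Lambda\approx M/\tau$; its point group $P$ contains $\langle\mathrm{hol}(\Gamma),-I\rangle$, hence $|P|\geq 2\,|\mathrm{hol}(\Gamma)|\geq 4$, and $-I\in P$ is central. Removing from $\mathbb{R}^3$ the discrete set of points with nontrivial $\Lambda$-stabilizer leaves a simply-connected space on which $\Lambda$ acts freely, with quotient homotopy equivalent to the manifold part $M_X$ of $\widehat{B}$ (resp.\ $\widehat{\TP}$); hence $\pi_1(M_X)\cong\Lambda$.

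Finally I would run van Kampen and produce the quotient. Since $\widehat{B}$ (resp.\ $\widehat{\TP}$) is $M_X$ with $m$ copies of the contractible space $K(P^2)$ attached along its boundary projective planes, and $\pi_1(P^2)=\mathbb{Z}/2$, van Kampen gives $\pi_1(\widehat{B})\cong\Lambda/\langle\!\langle x_1,\dots,x_m\rangle\!\rangle$, where $x_i\in\Lambda$ generates the image of $\pi_1$ of the $i$-th boundary $P^2$. Each such loop is small — it lies in a cone neighborhood $K(P^2)$ of a singular point — and fails to lift to a loop in the free double cover $M\setminus\bigcup C_i$, so $x_i$ is conjugate in $\Lambda$ to the order-$2$ element fixing the corresponding branch point, a point reflection; consequently $\ell(x_i)=-I$. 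Therefore the composition of the linear-holonomy homomorphism $\ell\colon\Lambda\to P$ with the projection $P\to P/\langle -I\rangle$ kills every $x_i$ and hence descends to a surjection $\pi_1(\widehat{B})\twoheadrightarrow P/\langle -I\rangle$. As $\langle -I\rangle$ is a proper subgroup of $P$, the group $P/\langle -I\rangle$ has order $|P|/2\geq 2$ and is nontrivial; in the two cases at hand it is $(\mathbb{Z}/2)^2$ and $\mathbb{Z}/2$, respectively. Hence $\pi_1(\widehat{B})$ and $\pi_1(\widehat{\TP})$ are nontrivial, i.e.\ neither space is simply connected.

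The main obstacle is not the final group-theoretic observation, which is immediate once $\Lambda$ and its point group are in hand, but the bookkeeping of the first two steps: correctly identifying $\widehat{B}$ and $\widehat{\TP}$ with $M/\tau$ for the stated flat manifolds, and justifying that $\tilde\tau$ — and hence each $x_i$ — has linear part \emph{exactly} $-I$. One must in particular verify that $\Lambda$ has point group strictly larger than $\{\pm I\}$; this is precisely where the nontriviality of $\mathrm{hol}(\Gamma)$ together with $-I\notin\mathrm{hol}(\Gamma)$ is used, and it is the feature that distinguishes the capped bipod and tetrapod from the (simply-connected) capped octopod, for which the analogous crystallographic group has point group exactly $\langle -I\rangle$.
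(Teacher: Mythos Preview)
Your argument is correct and takes a genuinely different route from the paper. The paper proceeds by exhibiting explicit nontrivial double covers: it uses the block decompositions $B=D\cup_{\partial_K D}(\Kl\tilde\times I)$ and $\TP=Q\cup_{T_0}(T^2\tilde\times I)$ from \cite{heil-larranaga}, observes that the twisted $I$-bundle has a $2$-sheeted trivial-bundle cover ($K_1\times I\to\Kl\tilde\times I$ and $T_0\times I\to T^2\tilde\times I$, respectively), and splices two capped copies of $\widehat D$ (resp.\ $\widehat Q$) onto the ends to produce a connected $2$-sheeted cover $\widehat D\cup(K_1\times I)\cup\widehat D\to\widehat B$ (and similarly for $\widehat\TP$). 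This is a short, hands-on topological construction requiring only the existence of the twisted $I$-bundle piece. Your approach instead identifies $\widehat B$ and $\widehat\TP$ as flat orbifolds $\mathbb{R}^3/\Lambda$, computes $\pi_1(M_X)\cong\Lambda$ via the free action on the complement of the branch locus, and kills the $P^2$-boundary loops by passing to the holonomy quotient $P/\langle -I\rangle$. What your method buys is structural insight: it produces an explicit surjection $\pi_1\twoheadrightarrow(\mathbb Z/2)^2$ (resp.\ $\mathbb Z/2$), and it isolates exactly why the capped octopod behaves differently (there $\mathrm{hol}(\Gamma)$ is trivial, so $P=\langle -I\rangle$ and the quotient collapses). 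The paper's method is more elementary---no Bieberbach theory or van Kampen bookkeeping---and matches the combinatorial flavour of the surrounding block decompositions. One minor slip: you write ``$\tilde\tau^2\in\Gamma$ is a translation fixing that point''; elements of $\Gamma$ need not be translations, but since $\Gamma$ acts freely on $\mathbb{R}^3$ any element fixing a point is the identity, which is what you use.
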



\begin{proof}
Let $I$ be a closed interval.
From \cite[Example 5]{heil-larranaga}, it follows that the \textit{bipod}, $B$, may be viewed as $D\cup(\Kl\tilde{\times}I)$, where $\Kl\tilde{\times} I$ is the non-orientable $I$-bundle over the Klein bottle, $\Kl$, {\color{blue} and $D$ is the dipus, defined in Section~\ref{ss:special.manifolds}}. In this decomposition,  $D\cap(\Kl\tilde{\times}I)=\partial(\Kl\tilde{\times}I)=\partial_K D$, the Klein bottle boundary component of $D$. Also, the \textit{tetrapod}, $\TP$, may be viewed as $Q\cup_{T_0}(T^2\tilde{\times}I)$, where $T^2\tilde{\times}I$ is the non-orientable twisted $I$-bundle over the $2$-torus and $T_0$ is the torus boundary of the \textit{quadripus}.

Let $\widehat{B}$, $\widehat{D}$, $\widehat{Q}$ and $\widehat{\TP}$ be the \textit{bipod}, \textit{dipus}, \textit{quadripus} and \textit{tetrapod}, respectively, capped with cones over the projective plane. Let us first make the following observations:

\begin{enumerate}[label=(\roman*)]
    \item If $X$ is a simply-connected space, then $X$ does not admit a non-trivial covering $p\colon\tilde{X}\to X$.\\
    \item Let $K_1=\partial(\Kl\tilde{\times}I)$. Then, the $2$-sheeted covering space of $\Kl\tilde{\times}I$ corresponding to the subgroup $\pi_1(K_1)$ is $p\colon K_1\times I\to \Kl\tilde{\times}I$.\\
    \item Let $T_0=\partial(T^2\tilde{\times}I)$. Then, the $2$-sheeted covering space of $T_0\tilde{\times}I$ corresponding to the subgroup $\pi_1(T_0)$ is $p\colon T_0\times I\to T^2\tilde{\times}I$.
\end{enumerate}

For the case of the \textit{bipod}, there is a non-trivial covering $p\colon Y\to \widehat{B}$. By $(i)$ $\widehat{B}$ is not simply-connected. Construct the covering as in Figure \ref{fig:bipod} by taking $Y=\widehat{D}\cup( K_1\times I)\cup\widehat{D}$ where the unions are over the Klein bottle boundaries and using $(ii)$ in the middle.

For the \textit{tetrapod}, there is a non-trivial covering $p\colon Y\to\widehat{\TP}$. By $(i)$ $\widehat{\TP}$ is not simply-connected. Construct the covering as in Figure \ref{fig:tetrapod}, by taking $Y=\widehat{Q}\cup (T_0\times I)\cup\widehat{Q}$ where the union is over torus boundaries and using $(ii)$ in the middle.
\begin{figure}
    \centering
    \includegraphics[scale=0.6]{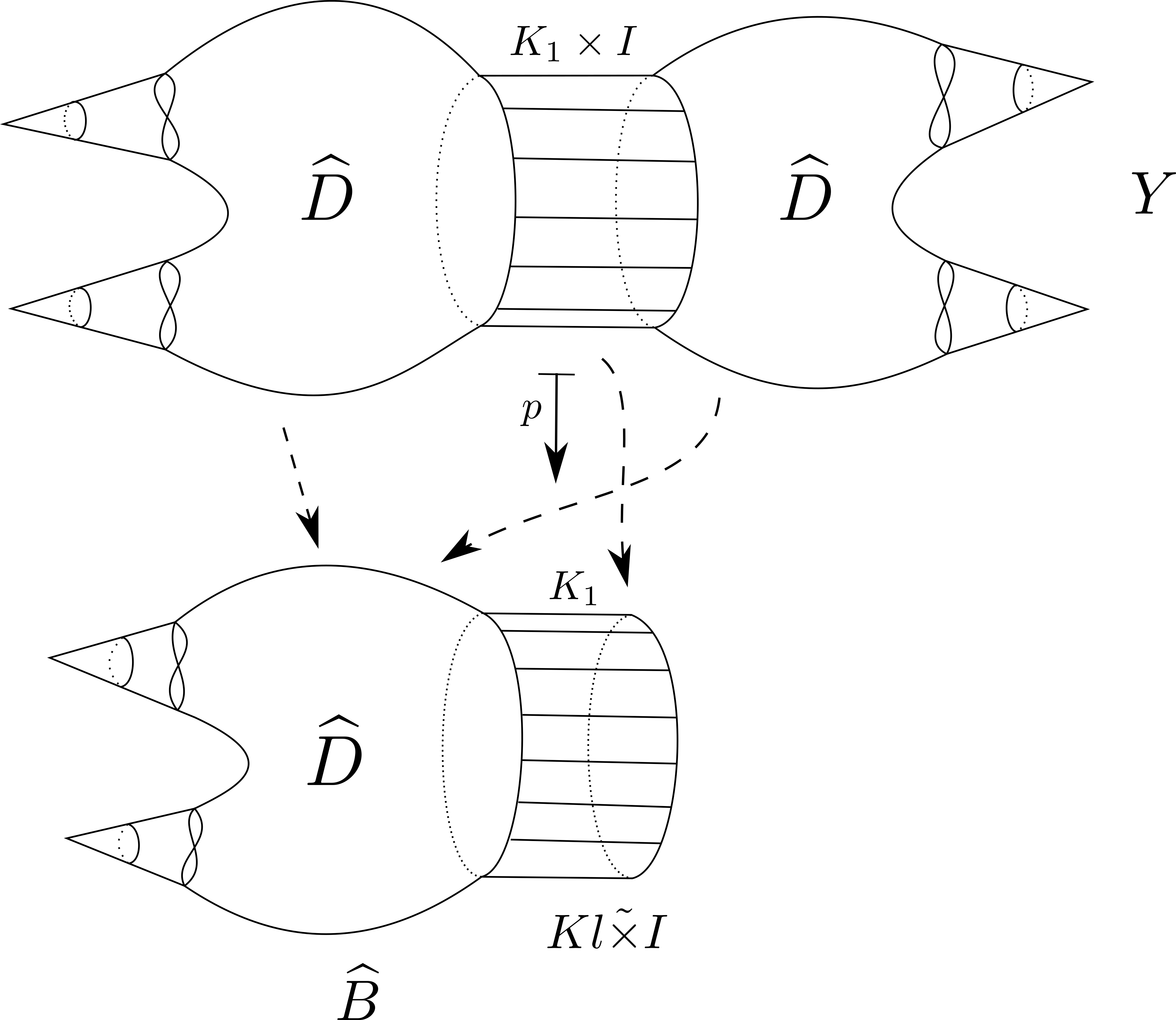}
    \caption{Non-trivial covering for $\widehat{B}$}
    \label{fig:bipod}
\end{figure}
\begin{figure}
    \centering
    \includegraphics[scale=0.4]{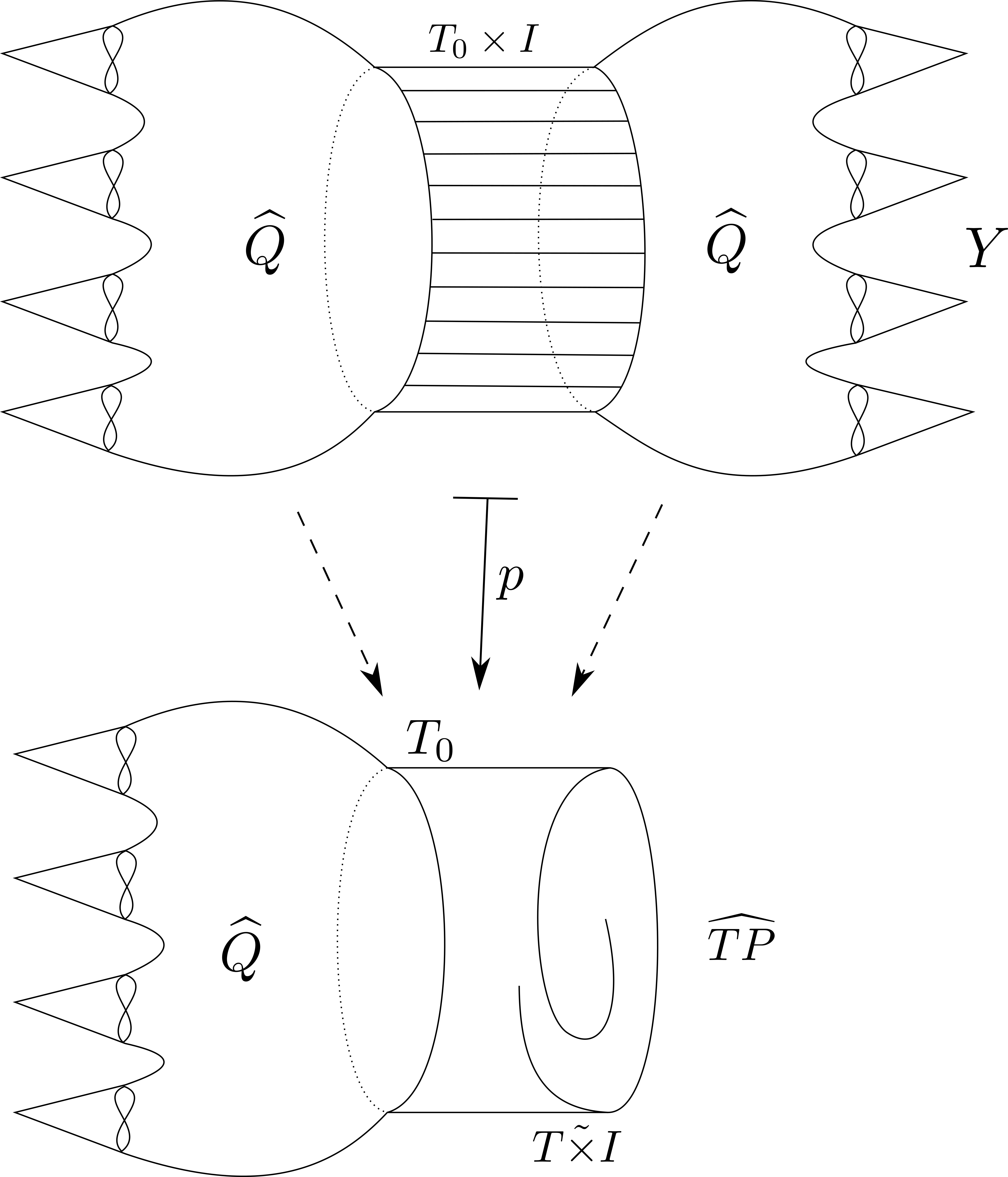}
    \caption{Non-trivial covering for $\widehat{\TP}$}
    \label{fig:tetrapod}
\end{figure}
\end{proof}


\subsection*{Proof of Theorem~\ref{thm:mitsuishi.yamaguchi}}
The spaces listed in Theorem~\ref{thm:mitsuishi.yamaguchi} are $D^3$, $K(P^2)$, $B(\mathrm{pt})$, $B(S_2)$, $B(S_4)$.  The only possible combinations are as follows:
\begin{align*}
    D^3\cup_{S^2}D^3,\ D^3\cup_{S^2}B(S_2),\ K(P^2)\cup_{P^2}K(P^2)\\ B(\mathrm{pt})\cup_{Kl}B(\mathrm{pt}),\ B(S_2)\cup_{S^2}B(S_2),\ B(S_4)\cup_{T^2}B(S_4).
\end{align*}
Then, using Lemmas \ref{double_bpt}, \ref{double_bs2}, and \ref{double_bs4}, we have that these combinations are homeomorphic to
\begin{align*}
    S^3,\ \mathrm{Susp}(P^2),\ \mathrm{Susp}(P^2)\#\mathrm{Susp}(P^2),\text{ or }
    T^3/\beta.
\end{align*}
Let $X$ be a closed simply-connected Alexandrov $3$-space with non-negative curvature. If $X$ is a topological manifold, then, by Perelman's proof of the Poincaré Conjecture, $X=S^3=D^3\cup_{S^2}D^3$. If $X$ has singular points, then, by  \cite[Theorem 1.3]{galaz-guijarro2015}, one of the following assertions holds:
\begin{enumerate}[label=(\alph*)]
    \item $X$ is homeomorphic to $\mathrm{Susp}(P^2)$ which can be realized as an isometric gluing: $D^3\cup_{S^2}B(S_2)$ or $K(P^2)\cup_{P^2}K(P^2)$.\\
    \item $X$ is homeomorphic to $\mathrm{Susp}(P^2)\#\mathrm{Susp}(P^2)$ which can be realized as an isometric gluing: $B(\mathrm{pt})\cup_{Kl}B(\mathrm{pt})$ or $B(S_2)\cup_{S^2}B(S_2)$.\\
    \item $X$ is isometric to a quotient of a closed, orientable, flat $3$-manifold by an orientation-reversing isometric involution with only isolated fixed points.
    By \cite[Theorem 6.7]{luft_sjerve1984},  only three orientable, flat $3$-manifolds admit an orientation-reversing involution with isolated fixed points. These are $T^3$, $T^2\times[0,1]/(z_1,z_2,0)\sim(\overline{z_1},\overline{z_2},1)$, and the Hantzche--Wendt manifold (see Definition \ref{ss:special.manifolds}), which we will denote by $M_6$.
    Moreover, the involutions on these three manifolds are unique up to conjugacy with $8$, $4$, and $2$ fixed points, respectively. The involutions for $T^3$ and $T^2\times[0,1]/(z_1,z_2,0)\sim(\overline{z_1},\overline{z_2},1)$ can be given explicitly. For the $3$-torus $T^3$, we let $\beta$ be the involution given by $\beta(z_1,z_2,z_3)=(\overline{z_1},\overline{z_2},\overline{z_3})$. For $T^2\times[0,1]/(z_1,z_2,0)\sim(\overline{z_1},\overline{z_2},1)$, the involution is given by
    \begin{align*}
        \alpha\colon T^2\times[0,1]/(z_1,z_2,0)\sim(\overline{z_1},\overline{z_2},1)\to T^2\times[0,1]/(z_1,z_2,0)\sim(\overline{z_1},\overline{z_2},1)\\
        \alpha[x,y,z]=\begin{cases}
        [-x,-y,\frac{1}{2}-z]\quad\mathrm{if}\ z\in[0,\frac{1}{2}]\\
        [-\overline{x},-\overline{y},\frac{3}{2}-z]\quad\mathrm{if}\ z\in[\frac{1}{2},1]
        \end{cases}
    \end{align*}
    By \cite[pages 108--109]{kim-sanderson}, $\alpha$ is conjugate to $\tau([z_1,z_2,t])=[-\overline{z_1},\overline{z_2},-t]$. For $M_6$, we will simply denote the corresponding involution by $i_6$. 
    
    The quotient $T^3/\beta$ is the capped octopod, which is simply-connected (see \cite[Proof of Theorem 1.5]{galaz-guijarro2015}); the quotient $T^2\times[0,1]/(z_1,z_2,0)\sim(\overline{z_1},\overline{z_2},1)/\alpha$ is homeomorphic to the tetrapod capped with cones over the projective plane; by Lemma~\ref{bipod_tetrapod}, the capped tetrapod is not simply-connected. Finally, the quotient space $M_6/i_6$ is the bipod capped with cones over the projective plane (see Definition \ref{ss:special.manifolds}); by Lemma \ref{bipod_tetrapod}, the capped bipod is not simply-connected.
\end{enumerate}
\qed


\section{Generalized Dehn surgery}
\label{s:dehn.surgery}

The \emph{Lickorish--Wallace theorem} for $3$-manifolds states that any closed, orientable $3$-manifold may be obtained by performing Dehn surgery on a link in the $3$-sphere \cite{Lickorish1962,Wallace1960}. In the non-orientable case, Lickorish showed that any closed, non-orientable $3$-manifold can be obtained from $S^2\tilde{\times}S^1$, the non-trivial $S^2$-bundle over $S^1$, via surgery on a link \cite[Theorem 3]{Lickorish1963}. Here, we show that any closed non-manifold Alexandrov $3$-space may be obtained  by doing generalized Dehn surgery in $S^2\tilde{\times}S^1$.


\begin{definition}[Generalized Dehn surgery] Let $P$ be a closed Alexandrov $3$-space. A \textit{link} in $P$ is a collection of disjoint knots embedded in $P$. Without loss of generality, we may assume that each of these knots avoids the topologically singular points of $P$; in other words, each knot is in $M_P$, the manifold part of $P$. 
We denote surgery on a link where we allow ourselves to cap off boundary components not only with solid tori or solid Klein bottles but also with copies of $B(\pt)$, as \emph{generalized Dehn surgery}.    
\end{definition}

We are now ready to prove Theorem~\ref{thm:lickorish.dehn.surgery}, which asserts that any closed Alexandrov $3$-space may be obtained by generalized Dehn surgery on a link either in the $3$-sphere or in the non-trivial $S^2$-bundle over $S^1$.

\subsection*{Proof of Theorem~\ref{thm:lickorish.dehn.surgery}}
Since the statement is known in the manifold case, we need only consider topologically-singular 
spaces.
We will show that any closed topologically singular Alexandrov $3$-space may be obtained by generalized Dehn surgery on a link in the non-trivial $S^2$-bundle over $S^1$.

Let $X$ be a topologically singular closed Alexandrov  $3$-space with $2k$ topologically singular points for some $k\geq 1$.
Observe first that we may arrange for each pair $(p_{i}, p_{i+1})$, $k=1,\ldots,k-1$, of topologically singular points to be contained in a  copy of $B(\pt)$ (one for each pair) as follows. The space $X$ is the union of a $3$-manifold $M_X$ with an even number of $ P^2$ boundary components and finitely many cones over these projective planes, corresponding to closed neighborhoods of each topologically singular point $p_i$. In $M_X$, join pairs of the boundary components $ P^2$ by disjoint arcs. A regular neighborhood of a pair of $P^2$ boundary components and its connecting arc is the disk sum of two copies of $P^2\times I$, and attaching the cones to the $P^2$ boundary components gives a $B(\pt)$. After assigning each pair of topologically singular points to a $B(\pt)$, we have $k$ disjoint copies of $B(\pt)$ in $X$, each containing a pair of topologically singular points. Note that the $B(\pt)$ subspaces we get depend on our choice of connecting arc, so we may assign each pair of topologically singular points in $X$ to a $B(\pt)$ in infinitely many ways. 

We now remove the $B(\pt)$ pieces containing pairs of topologically singular points from $X$ to obtain a non-orientable $3$-manifold $N_X$ with $k\geq1$ Klein bottle boundary components. As noted in the preceding paragraph, $B(\pt)$ is the boundary connected sum of two cones over $P^2$, so the boundary of $B(\pt)$ is a Klein bottle. Next, close $N_X$ by gluing in a solid Klein bottle to each of the $k$ Klein bottle boundary components to obtain a closed non-orientable $3$-manifold $N$.

By Lickorish's surgery theorem \cite[Theorem 3]{Lickorish1963}, $N$ can be obtained by surgery on a link in the non-orientable $2$-sphere bundle over $S^1$. By transversality, we can arrange for the link not to intersect the $k\geq 1$ solid Klein bottles added to $N_X$ to obtain $N$.

 Reversing this process, we obtain $X$ by generalized surgery on the non-orientable $S^2$-bundle over $S^1$.
\qed


We obtain Corollary~\ref{cor:lickorish.boundary.4d}, which asserts that every closed Alexandrov $3$-space is homeomorphic to the boundary of a $4$-dimensional Alexandrov $4$-space, as a consequence of the generalized Dehn surgery theorem. This corollary generalizes the classical result that every closed $3$-manifold bounds a $4$-manifold (see \cite{Lickorish1962,Lickorish1963,Thom}).

\subsection*{Proof of Corollary~\ref{cor:lickorish.boundary.4d}}
Let $P$ be a closed Alexandrov $3$-space. Suppose first that $P$ is a manifold. Then there exists a compact $4$-dimensional topological manifold $W$ whose boundary is $P$ (see \cite{Thom} or \cite[Theorem 3]{Lickorish1962}) for the orientable case and  \cite[Theorem 4]{Lickorish1963} for the non-orientable one). This result also holds in the smooth category, ensuring that $W$ is smooth. Since $W$ is smooth and compact, it supports a complete Riemannian metric with sectional curvature uniformly bounded below. Hence, $W$ is a $4$-dimensional Alexandrov space whose boundary is homeomorphic to $P$.  


Suppose now that $P$ is not a manifold. Consider the smooth $4$-dimensional orbifold $Y=D^2\times D^2/\tau$, where $\tau\colon D^2\times D^2\to D^2\times D^2$ is given by 
\[\tau(x,y)=(-x,\overline{y}).\]
This space satisfies, $\partial Y=(S^1\times D^2)\cup(D^2\times S^1)/\tau=B\cup_{Kl}B(\mathrm{pt})$, where $B$ denotes the solid Klein bottle.

Recall from the proof of Theorem~\ref{thm:lickorish.dehn.surgery}
that we may remove a finite number of $B(\mathrm{pt})$ from $P$ to obtain a non-orientable $3$-manifold $N_P$ with a finite number of Klein bottle boundary components. Next, close $N_P$ by gluing a copy of the solid Klein bottle $B$ to each of the Klein bottle boundary components to obtain a closed non-orientable $3$-manifold $V$. By \cite[Theorem 4]{Lickorish1963}, there is a smooth compact $4$-manifold $W$ such that $\partial W=V$. Now, for every solid Klein bottle $B$ that we need to remove from $V$ to construct $P$, glue in a copy of $Y$ to $W$ by identifying the $B$ in $\partial Y$ to the corresponding $B$ in $V$. After smoothing corners, we obtain a compact $4$-dimensional smooth orbifold $Z$ whose boundary is homeomorphic to $P$. Since $Z$ is smooth and compact, it admits a complete orbifold Riemannian metric with sectional curvature uniformly bounded below, which implies that $Z$ is a $4$-dimensional Alexandrov space.
\qed


\bibliographystyle{plainurl}
\bibliography{main}

\end{document}